\newcommand{\dst}{\displaystyle}
\newcommand{\Acal}{\mathcal{A}}
\newcommand{\Bcal}{\mathcal{B}}
\newcommand{\Ccal}{\mathcal{C}}
\newcommand{\Fcal}{\mathcal{F}}
\newcommand{\Lcal}{\mathcal{L}}
\newcommand{\Mcal}{\mathcal{M}}
\newcommand{\Pcal}{\mathcal{P}}
\newcommand{\Scal}{\mathcal{S}}
\newcommand{\IE}{\mathbb{E}}
\newcommand{\IN}{\mathbb{N}}
\newcommand{\IP}{\mathbb{P}}
\newcommand{\IQ}{\mathbb{Q}}
\newcommand{\IR}{\mathbb{R}}
\newcommand{\IZ}{\mathbb{Z}}
\newcommand{\drm}{\mathrm{d}}
\newcommand{\erm}{\mathrm{e}}
\newcommand{\Irm}{\mathrm{I}}
\newcommand{\Lrm}{\mathrm{L}}
\newcommand{\vbar}{\overline{v}}
\newcommand{\Ybar}{\overline{Y}}
\newcommand{\Zbar}{\overline{Z}}
\newcommand{\lambdabar}{\overline{\lambda}}
\newcommand{\zetabar}{\overline{\zeta}}
\newcommand{\chat}{\widehat{c}}
\newcommand{\nuhat}{\widehat{\nu}}
\newcommand{\Ctilde}{\widetilde{C}}
\newcommand{\utilde}{\widetilde{u}}
\newcommand{\vtilde}{\widetilde{v}}
\newcommand{\Wtilde}{\widetilde{W}}
\newcommand{\Ytilde}{\widetilde{Y}}
\newcommand{\Ztilde}{\widetilde{Z}}
\newcommand{\IEtilde}{\widetilde{\IE}}
\newcommand{\IPtilde}{\widetilde{\IP}}
\newcommand{\Deltatilde}{\widetilde{\Delta}}
\newcommand{\lambdatilde}{\widetilde{\lambda}}
\newcommand{\psitilde}{\widetilde{\psi}}
\newcommand{\zetatilde}{\widetilde{\zeta}}
\renewcommand{\leq}{\leqslant}
\renewcommand{\geq}{\geqslant}
\renewcommand{\epsilon}{\varepsilon}
\newcommand{\GL}{\mathrm{GL}}
\newcommand{\ind}{\mathbb{1}}
\newcommand{\loc}{\mathrm{loc}}
\newcommand{\lip}{\mathrm{lip}}
\def\newboxtheorem{%
  \@ifstar\thmbox@new@star\newboxtheorem@nostar}%
\def\newboxtheorem@nostar{%
  \@ifnextchar[{\thmbox@newA}{\thmbox@newA[]}}%
\def\thmbox@new@star{%
  \@ifnextchar[{\thmbox@new@star@A}{\thmbox@new@star@A[]}}%
\def\thmbox@new@star@A[#1]#2#3{%
  \expandafter\def\csname#2\endcsname{%
    \setkeys{thmbox}{#1}%
    \@ifnextchar[{\thmbox@star@beginA{#3}{}}{%
      \thmbox@star@begin{#3}{}}}%
  \expandafter\def\csname end#2\endcsname{%
    \endthmbox\smallbreak}}
\def\thmbox@star@beginA#1[#2]{%
  \thmbox@star@begin{#1}{\thmbox@titlestyle{#2}}{}}
\def\thmbox@star@begin#1#2{%
  \medbreak
  \thmbox{\thmbox@headstyle{#1}{\unskip}#2}%
  \thmbox@bodystyle\ignorespaces}
\let\newtheorem\newboxtheorem
\newtheorem[style=L,bodystyle=\noindent\upshape]{thm}{Theorem}
\newtheorem[style=M,bodystyle=\noindent\upshape]{cor}[thm]{Corollary}
\newtheorem[style=M,bodystyle=\noindent\upshape]{lem}[thm]{Lemma}
\newtheorem[style=M,bodystyle=\noindent\upshape]{prop}[thm]{Proposition}
\newtheorem[style=S,bodystyle=\noindent\upshape]{de}[thm]{Definition}
\newtheorem[style=S,bodystyle=\noindent\upshape]{ass}[thm]{Assumption}
\newenvironment{rmq}{\stepcounter{thm} \noindent \small \textbf{Remark \thethm. }}{\normalsize}
\newtheorem*[style=L,bodystyle=\noindent\upshape]{theo}{Theorem}
\newtheorem[style=S,bodystyle=\noindent\upshape]{hyp}[thm]{Hypothesis}
\newcommand{\limit}[2]{
\underset{#1 \rightarrow #2}{\longrightarrow}}
\newcommand{\limite}[3]{
\underset{#2 \rightarrow #3}{\overset{#1}{\longrightarrow}}}
\def\restriction#1#2{\mathchoice
              {\setbox1\hbox{${\displaystyle #1}_{\scriptstyle #2}$}
              \restrictionaux{#1}{#2}}
              {\setbox1\hbox{${\textstyle #1}_{\scriptstyle #2}$}
              \restrictionaux{#1}{#2}}
              {\setbox1\hbox{${\scriptstyle #1}_{\scriptscriptstyle #2}$}
              \restrictionaux{#1}{#2}}
              {\setbox1\hbox{${\scriptscriptstyle #1}_{\scriptscriptstyle #2}$}
              \restrictionaux{#1}{#2}}}
\def\restrictionaux#1#2{{#1\,\smash{\vrule height .8\ht1 depth .85\dp1}}_{\,#2}} 
\newcommand{\pushright}[1]{\ifmeasuring@#1\else\omit\hfill$\displaystyle#1$\fi\ignorespaces}
\newcommand{\pushleft}[1]{\ifmeasuring@#1\else\omit$\displaystyle#1$\hfill\fi\ignorespaces}
\begin{document}

\title{\normalsize \bf Ergodic BSDE with an unbounded and multiplicative underlying diffusion\\
 and application to large time behavior of viscosity solution of HJB equation}

\date{}
\author{Ying Hu\thanks{Univ Rennes, CNRS, IRMAR - UMR 6625, F-35000 Rennes, France (ying.hu@univ-rennes1.fr) and School of
Mathematical Sciences, Fudan University, Shanghai 200433, China.
Partially supported by Lebesgue center of mathematics ``Investissements d'avenir"
program - ANR-11-LABX-0020-01,  by CAESARS - ANR-15-CE05-0024 and by MFG - ANR-16-CE40-0015-01.} \and
Florian Lemonnier\thanks{Univ Rennes, CNRS, IRMAR - UMR 6625, F-35000 Rennes, France (Florian.Lemonnier@ens-rennes.fr). 
Partially supported by Lebesgue center of mathematics ``Investissements d'avenir"
program - ANR-11-LABX-0020-01,  by CAESARS - ANR-15-CE05-0024 and by MFG - ANR-16-CE40-0015-01. }}

\maketitle

\normalsize

\textbf{Abstract.} In this paper, we study ergodic backward stochastic differential equations (EBSDEs for short), for which the underlying diffusion is assumed to be multiplicative and of at most linear growth. The fact that the forward process has an unbounded diffusion is balanced with an assumption of weak dissipativity for its drift. Moreover, the forward equation is assumed to be non-degenerate. Like in \cite{PDE}, we show that the solution of a BSDE in finite horizon $T$ behaves basically as a linear function of $T$, with a shift depending on the solution of the associated EBSDE, with an explicit rate of convergence. Finally, we apply our results to an ergodic optimal control problem. In particular, we show the large time behaviour of viscosity solution of Hamilton-Jacobi-Bellman equation with an exponential rate of convergence when the undelrying diffusion is multiplicative and unbounded.

{\bf Key words.} multiplicative and unbounded diffusion, ergodic backward stochastic differential equation, HJB equation, large time behavior, rate of convergence

{\bf AMS subject classifications.} 60H10, 35B40, 93E20

\section{Introduction}

We study the following EBSDE in finite dimension and infinite horizon: for all $t,T \in \IR_+$ such that $0 \leq t \leq T < \infty$,
\begin{equation}
 Y_t^x = Y_T^x + \int_t^T \left\{ \psi\left(X_s^x, Z_s^x\right) - \lambda\right\} \, \drm s - \int_t^T Z_s^x \, \drm W_s,
\label{eqintro:EBSDE}
\end{equation}
where the unknown is the triplet $\left(Y^x, Z^x, \lambda\right)$, with:
\begin{itemize}
\item $Y^x$ a real-valued and progressively measurable process;
\item $Z^x$ a $\left(\IR^d\right)^*$-valued and progressively measurable process;
\item $\lambda$ a real number.
\end{itemize}

The given data of our equation consists in:
\begin{itemize}
\item $W$, a $\IR^d$-valued standard Brownian motion;
\item $x \in \IR^d$;
\item $X^x$, a $\IR^d$-valued process, starting from $x$, and solution of the SDE: for all $t \in \IR_+$,
\begin{equation}
X_t^x = x + \int_0^t \Xi\left(X_s^x\right) \, \drm s + \int_0^t \sigma\left(X_s^x\right) \, \drm W_s;
\label{eqintro:SDE}
\end{equation}
\item $\psi : \IR^d \times \left(\IR^d\right)^* \rightarrow \IR$ a measurable function.
\end{itemize}

This class of ergodic BSDEs was first introduced by Fuhrman, Hu and Tessitore in \cite{EBSDE1}, in order to study optimal ergodic control problem. In that paper, the main assumption is the strong dissipativity of $\Xi$, that is to say:
\[ \exists \eta > 0,\, \forall x,x' \in \IR^d,\, \langle \Xi(x) - \Xi(x'), x-x' \rangle \leq -\eta |x-x'|^2.\]

The strong dissipativity assumption was then dropped off in \cite{EBSDE2} and replaced by a weak dissipativity assumption: in other words, $\Xi$ can be written as the sum of a dissipative function and of a bounded function.

A few years later, in \cite{PDE}, under similar assumptions, the large time behaviour of BSDEs in finite horizon $T$: for all $t \in [0,T]$, 
\begin{equation}
Y_t^{T,x} = g\left(X_T^x\right) + \int_t^T \psi\left(X_s^x, Z_s^{T,x}\right) \, \drm s - \int_t^T Z_s^{T,x} \, \drm W_s,
\label{eqintro:BSDE}
\end{equation}
was studied and linked with ergodic BSDEs. The authors prove the existence of a constant $L \in \IR$, such that for all $x \in \IR^d$,
\[ Y_0^{T,x} - \lambda T - Y_0^x \underset{T \rightarrow \infty}{\longrightarrow} L;\]
moreover, they obtain an exponential rate on convergence.

Those BSDEs are linked with Hamilton-Jacobi-Bellman equations. Indeed, the solution of equation (\ref{eqintro:BSDE}) can be written as $Y_t^{T,x} = u\left(T-t, X_t^x\right)$, where $u$ is the solution of the Cauchy problem
\begin{equation}
 \left\{ \begin{array}{llll}
           \partial_t u(t,x) &=& \Lcal u(t,x) + \psi\left(x,\nabla u(t,x) \sigma(x)\right) & \forall (t,x) \in \IR_+ \times \IR^d, \\
           u(0,x) &=& g(x) & \forall x \in \IR^d,
           \end{array} \right.
\label{eqintro:PDE}
\end{equation}
and where $\Lcal$ is the generator of the Kolmogorov semigroup of $X^x$, solution of (\ref{eqintro:SDE}). Also, the ergodic BSDE (\ref{eqintro:EBSDE}) admits a solution such that $Y_t^x = v\left(X_t^x\right)$ satisfies
\begin{equation}
\Lcal v(x) + \psi\left(x,\nabla v(x) \sigma(x)\right) - \lambda = 0 \ \ \ \ \ \forall x \in \IR^d.
\label{eqintro:EPDE}
\end{equation}

Because the functions $u$ and $v$ built by solving BSDEs or EBSDEs are not, in general, of class $\Ccal^2$, we only solve these equations in a weak sense. In \cite{PDE}, the authors prove that, under their assumptions, the function $v$ is of class $\Ccal^1$ and they are able to work with mild solutions. But in this article, we only prove the continuity of $v$, so we link the solution of the EBSDE (\ref{eqintro:EBSDE}) with the viscosity solution of the ergodic PDE (\ref{eqintro:EPDE}). Viscosity solutions of PDEs have already been widely studied (see \cite{Ish06}).

The large time behaviour of such PDEs has already been widely studied, for example in \cite{Cag15} and the references therein: but we do not make here the same assumptions. The authors work on the torus $\IR^n / \IZ^n$, and they need the Hamiltonian $\psi$ to be uniformly convex; on the other side, they are less restrictive about the matrix $\sigma(x) \sigma(x)^*$ which only needs to be nonnegative definite. They prove the convergence of $u(t,\bullet) - v - \lambda t$ as $t \rightarrow \infty$, but they do not have any rate of convergence. The same method has been used in \cite{Mitake15} to study the large time behaviour of the solution to the obstacle problem for degenerate viscous Hamilton-Jacobi equations.

But in \cite{EBSDE1}, \cite{EBSDE2} or \cite{PDE}, the diffusion $\sigma$ of the forward process $X^x$ is always supposed to be constant. The main contribution in this article is that the function $\sigma$ is here assumed to be Lipschitz continuous, invertible, and such that $\sigma^{-1}$ is bounded. Moreover, we will need the linear growth of $\sigma$ to be small enough, regarding the weak dissipativity of the drift $\Xi$: consequently, in average, the forward process $X^x$ is attracted to the origin. The key point is a coupling estimate for a multiplicative noise obtained by the irreducibility of $X^x$ (see \cite{DaPrato96}, \cite{Cohen15} or \cite{Mattingly01}): the proof (see Theorem \ref{thm:coupling_estimate}) is different from \cite{Madec15}, where a bridge was used in equation (B.6), which required $\sigma$ to fulfill $\langle y, \sigma(x+y)-\sigma(x)\rangle \leq \Lambda |y|$ with $\Lambda \geq 0$ not too large. Then, we apply this result to auxiliary monotone BSDEs in infinite horizon:
\[ Y_t^{\alpha,x} = Y_T^{\alpha,x} + \int_t^T \left\{ \psi\left(X_s^x, Z_s^{\alpha,x}\right) - \alpha Y_s^{\alpha,x}\right\} \, \drm s - \int_t^T Z_s^{\alpha,x} \, \drm W_s,\]
where $\alpha \in \IR_+^*$ (see \cite{BH98} or \cite{Royer04}); we get that $Y_0^{\alpha,x}$ is locally Lipschitz with respect to $x$, and it allows us to prove the convergence of $\left(Y^{\alpha,x} - Y^{\alpha,0}_0, Z^{\alpha,x}, \alpha Y_0^{\alpha,0}\right)$ to a solution $\left(Y^x, Z^x, \lambda\right)$ of the EBSDE, for every $x \in \IR^d$. We can prove uniqueness for $\lambda$, but we can only expect uniqueness of $\left(Y^x, Z^x\right)$ as measurable functions of $X^x$.

In \cite{PDE}, the large time behaviour is obtained when $\psi$ is assumed to be Lipschitz continuous with respect to $z$ only. But we could not extend this result when $\sigma$ is unbounded: we need here $\psi$ to be Lipschitz continuous with respect to $z \sigma(x)^{-1}$ (it is not equivalent to being Lipschitz continuous with respect to $z$ when $\sigma$ is unbounded). The price to pay is that we also require the existence of a constant $K_x$ such that:
\[ |\psi(x,z) - \psi(x',z')| \leq K_x |x-x'| + K_z \left|z \sigma(x)^{-1} - z' \sigma(x')^{-1}\right|,\]
in order to keep the implication ``$Y_0^{\alpha,x}$ has quadratic growth, so its increments have also quadratic growth''. This part of our work is somewhat technical and is presented in the appendix.

The paper is organised as follows. Some notations are introduced in section 2. In section 3, we study a SDE slightly more general than the one satisfied by $X^x$: indeed, it will be the SDE satisfied by $X^x$ after a change of probability space, due to Girsanov's theorem. In section 4, we prove that, in a way, our EBSDE admits a unique solution, which allows us also to solve an ergodic PDE. The link between this solution and the solutions of some finite horizon BSDEs is presented in section 5. The section 6 is devoted to an application of our results to an optimal ergodic control problem. Finally, the appendix presents how we keep the estimates of the increments of $Y_0^{\alpha,x}$ with respect to $x$, despite our twisted Lipschitz assumption on $\psi$.

\section{Notations}

Throughout this paper, $\left(W_t\right)_{t \geq 0}$ will denote a $d$-dimensional Brownian motion, defined on a parobability space $(\Omega, \Fcal, \IP)$. For $t \geq 0$, set $\Fcal_t$ the $\sigma$-algebra generated by $W_s$, $0 \leq s \leq t$, and augmented with the $\IP$-null sets of $\Fcal$. We write elements of $\IR^d$ as vectors, and the star ${}^*$ stands for transposition. The Euclidean norm on $\IR^d$ and $\left(\IR^d\right)^*$ will be denoted by $|\bullet|$. For a matrix of $\Mcal_d(\IR)$, we denote by $|\bullet|_F$ its Frobenius norm, that is to say the square root of the sum of the square of its coefficients. The Lipschitz constant of a function is written $\|\bullet\|_\lip$ and we also write $\|f\|_\infty := \sup_x |f(x)|$ for a bounded function $f$. Finally, for any process, $|\bullet|^{*,p}_{t,T}$ stands for $\dst \sup_{s \in [t,T]} \left|\bullet_s\right|^p$, and the exponent $p$ is omitted when equal to $1$.

\section{The SDE}

We consider the SDE:
\begin{equation}
\left\{ \begin{array}{rcl}
  \drm X_t^x &=& \Xi\left(t,X_t^x\right) \, \drm t + \sigma\left(X_t^x\right) \, \drm W_t, \\
  X_0^x &=& x,
\end{array} \right.
\label{eq:SDE}
\end{equation}

\begin{ass}
\begin{itemize}
\item $\Xi : \IR_+ \times \IR^d \rightarrow \IR^d$ is Lipschitz continuous w.r.t. $x$ uniformly in $t$: precisely, there exists some constants $\xi_1$ and $\xi_2$ such that for all $t \in \IR_+$ and $x \in \IR^d$, $|\Xi(t,x)| \leq \xi_1 + \xi_2|x|$;
\item $\Xi$ is weakly dissipative, \textit{i.e.} $\Xi(t,x) = \xi(x) + b(t,x)$, where $\xi$ is dissipative (that is to say $\langle \xi(x),x \rangle \leq - \eta|x|^2$, with $\eta > 0$) and locally Lipschitz and $b$ is bounded; this way, $\exists \eta_1, \eta_2 >0,\, \forall t \in \IR_+,\, \forall x \in \IR^d,\, \langle \Xi(t,x),x  \rangle \leq \eta_1 -\eta_2 |x|^2$;
\item $\sigma : \IR^d \rightarrow \GL_d(\IR)$ is Lipschitz continuous, the function $x \mapsto \sigma(x)^{-1}$ is bounded and $\forall x \in \IR^d,\, |\sigma(x)|_F^2 \leq r_1 + r_2|x|^2$.
\end{itemize}
\label{ass_SDE}
\end{ass}

\begin{rmq}
If we know that $\sigma$ has a sublinear growth, \emph{i.e.} $|\sigma(x)|_F \leq C\left(1+|x|^\alpha\right)$ with $\alpha \in (0,1)$, then, by Young's inequality, we have $|\sigma(x)|_F^2 \leq r_1+r_2|x|^2$, and $r_2$ can be chosen as close to $0$ as possible.
\end{rmq}

\begin{thm}
Under Assumption \ref{ass_SDE}, for all $p \in [2,+\infty)$ and for all $T \in (0,+\infty)$, there exists a unique process $X^x \in \Lrm_\Pcal^p\left(\Omega, \Ccal\left([0,T],\IR^d\right)\right)$ strong solution to (\ref{eq:SDE}).
\label{thm:solution_SDE}
\end{thm}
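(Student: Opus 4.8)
The plan is to reduce, by a routine localization, to the classical situation of globally Lipschitz coefficients, then to remove the localization and obtain the $\Lrm^p$--integrability from a priori bounds exploiting the linear growth of $\sigma$ and the weak dissipativity of $\Xi$. A preliminary remark makes the argument smooth: by Assumption~\ref{ass_SDE} the diffusion $\sigma$ is already \emph{globally} Lipschitz, so only the drift needs to be truncated and the stochastic integral is never the source of difficulty; the single delicate point will be the a priori estimate described below.

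First I would localize the drift. For every integer $n \geq 1$, letting $\pi_n$ be the radial retraction onto the closed ball of radius $n$, set $\Xi_n(t,x) := \Xi\left(t,\pi_n(x)\right)$. Since $\Xi$ is Lipschitz in $x$ on every ball uniformly in $t$ and $\pi_n$ is $1$--Lipschitz with values in that ball, $\Xi_n$ is globally Lipschitz in $x$ uniformly in $t$, and bounded; together with the global Lipschitz continuity of $\sigma$, the classical existence--uniqueness theorem for SDEs with Lipschitz coefficients provides, for each $p \in [2,\infty)$, a unique strong solution $X^{x,n} \in \Lrm_\Pcal^p\left(\Omega,\Ccal([0,T],\IR^d)\right)$ of the equation driven by $\left(\Xi_n,\sigma\right)$. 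Put $\tau_n := \inf\left\{t\in[0,T]:\left|X_t^{x,n}\right|\geq n\right\}$ with $\inf\emptyset = T$. As $\Xi_n = \Xi_m$ on $[0,T]\times\{|x|\leq n\wedge m\}$, the usual pasting argument based on uniqueness shows that $\left(\tau_n\right)_n$ is nondecreasing and that $X_{\cdot\wedge\tau_n}^{x,n} = X_{\cdot\wedge\tau_n}^{x,m}$ a.s. whenever $m\geq n$; hence $X_t^x := X_t^{x,n}$ for $t\leq\tau_n$ defines a continuous adapted process on $[0,\tau_\infty)$, where $\tau_\infty := \lim_n\tau_n$, which solves (\ref{eq:SDE}) on that interval. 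It then remains to show $\tau_\infty\geq T$ a.s. together with the $\Lrm^p$--integrability.

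For this, fixing $p\geq 2$, I would apply Itô's formula to $s\mapsto\left(1+\left|X_s^x\right|^2\right)^{p/2}$ on $[0,t\wedge\tau_n]$ and bound the drift terms using $\left\langle\Xi(s,X_s^x),X_s^x\right\rangle\leq\eta_1-\eta_2\left|X_s^x\right|^2$ and $\left|\sigma(X_s^x)\right|_F^2\leq r_1+r_2\left|X_s^x\right|^2$, so that all drift contributions are dominated by $C_p\left(1+\left|X_s^x\right|^2\right)^{p/2}$ for a constant $C_p$ depending only on $p,d,\eta_1,\eta_2,r_1,r_2$ — the possibly positive coefficient $r_2-2\eta_2$ being simply absorbed into $C_p$, since on a bounded horizon no smallness of $r_2$ is required. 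Taking the supremum over $t\in[0,T]$, handling the martingale part with the Burkholder--Davis--Gundy inequality (licit because $\left|X^x\right|$ is bounded by $n$ on $[0,\tau_n]$, which also keeps every expectation below finite), and then using Young's inequality, I expect to reach
\[
\IE\left[\sup_{t\in[0,T]}\left(1+\left|X_{t\wedge\tau_n}^x\right|^2\right)^{p/2}\right] \leq C_p'\left(1+|x|^p+\int_0^T\IE\left[\sup_{u\in[0,s]}\left(1+\left|X_{u\wedge\tau_n}^x\right|^2\right)^{p/2}\right]\drm s\right),
\]
so that Gronwall's lemma gives a bound $\leq C_p''\left(1+|x|^p\right)\erm^{C_p''T}$ \emph{uniform in $n$}. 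Hence $\IP(\tau_n<T)\leq\left(1+n^2\right)^{-p/2}C_p''\left(1+|x|^p\right)\erm^{C_p''T}\to 0$, so $\tau_\infty\geq T$ a.s., and Fatou's lemma then yields $X^x\in\Lrm_\Pcal^p\left(\Omega,\Ccal([0,T],\IR^d)\right)$ with $\IE\left[\sup_{t\in[0,T]}\left|X_t^x\right|^p\right]\leq C_p''\left(1+|x|^p\right)\erm^{C_p''T}$. This is the step that calls for genuine care — running Itô's formula on $\left(1+|X|^2\right)^{p/2}$ for a general $p$, controlling the supremum through Burkholder--Davis--Gundy, and closing the estimate with a constant independent of $n$ in spite of the uncontrolled sign of $r_2-2\eta_2$ (harmless here because the horizon is finite; the smallness of $r_2$ relative to $\eta_2$ will matter only later, for the estimates uniform in $T$ and for the coupling result of Theorem~\ref{thm:coupling_estimate}).

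Finally, for uniqueness, given two solutions $X,X'$ in $\Lrm_\Pcal^p\left(\Omega,\Ccal([0,T],\IR^d)\right)$ I would set $\rho_N := \inf\{t\in[0,T]:|X_t|\vee|X_t'|\geq N\}$, apply Itô's formula to $\left|X_t-X_t'\right|^2$ on $[0,t\wedge\rho_N]$, where $\Xi$ is Lipschitz in $x$ uniformly in $t$ and $\sigma$ is globally Lipschitz, take expectations and use Gronwall's lemma to get $X_{\cdot\wedge\rho_N}=X_{\cdot\wedge\rho_N}'$ a.s.; since $\sup_{t\in[0,T]}\left(|X_t|\vee|X_t'|\right)<\infty$ a.s., letting $N\to\infty$ gives $X=X'$ on $[0,T]$.
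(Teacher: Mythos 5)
Your proof is correct, but it takes a genuinely different route from the paper. The paper simply observes that under Assumption \ref{ass_SDE} both coefficients are globally Lipschitz (the assumption states that $\Xi$ is Lipschitz in $x$ uniformly in $t$, not merely locally so) with linear growth, and invokes the classical existence--uniqueness theorem (Theorem 7.4 of \cite{DaPrato92}), whose proof is a fixed-point/Picard contraction in $\Lrm^p_\Pcal\left(\Omega,\Ccal([0,T],\IR^d)\right)$; the moment bound is then the separate Proposition \ref{majo_X_Sp}, obtained from Burkholder--Davis--Gundy and Gronwall using only the linear growth constants $\xi_1,\xi_2,r_1,r_2$. You instead treat the drift as if it were only locally Lipschitz: you truncate it, solve the truncated equations, paste along the stopping times $\tau_n$, and rule out explosion through an a priori estimate on $\left(1+|X|^2\right)^{p/2}$ exploiting $\langle\Xi(t,x),x\rangle\leq\eta_1-\eta_2|x|^2$. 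This is heavier than necessary here (the localization and non-explosion step can be skipped entirely since the contraction argument applies directly), but it is sound and in fact more robust: it would cover drifts that are only locally Lipschitz with a one-sided linear bound, and your uniform-in-$n$ estimate essentially reproves Proposition \ref{majo_X_Sp} along the way. Your remark that no smallness of $r_2$ relative to $\eta_2$ is needed on a finite horizon, and that this smallness only matters for the estimates uniform in $T$ (Proposition \ref{prop:sup_fini}) and the coupling of Theorem \ref{thm:coupling_estimate}, is consistent with the paper.
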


\begin{proof}~\\
See Theorem 7.4 of \cite{DaPrato92}. The main idea for the existence is to use a fix point theorem.
\end{proof}

\begin{prop}
Under Assumption \ref{ass_SDE}, for all $p \in (0,+\infty)$, and $T > 0$, we have:
\[ \sup_{0 \leq t \leq T} \IE\left[\left|X_t^x\right|^p\right] \leq \IE\left[\sup_{0 \leq t \leq T} \left|X_t^x\right|^p\right] \leq C \left(1+|x|^p\right),\]
where $C$ only depends on $p$, $T$, $r_1$, $r_2$, $\xi_1$ and $\xi_2$.
\label{majo_X_Sp}
\end{prop}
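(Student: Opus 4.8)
The plan is an It\^o--Gronwall estimate on the smooth Lyapunov function $V(y):=(1+|y|^2)^{p/2}$, using \emph{only} the linear growth bounds on $\Xi$ and $\sigma$ (not the dissipativity — this is why the constant $C$ will not involve $\eta_1,\eta_2$). The first inequality is immediate: for each fixed $t\in[0,T]$ one has $|X_t^x|^p\leq\sup_{s\in[0,T]}|X_s^x|^p$ a.s., so taking expectation and then the supremum over $t$ gives $\sup_{0\leq t\leq T}\IE[|X_t^x|^p]\leq\IE[\sup_{0\leq t\leq T}|X_t^x|^p]$. For the second inequality I would first reduce to $p\geq 2$: for $0<p<2$, Jensen's inequality gives $\IE[\sup_{[0,T]}|X^x|^p]\leq\bigl(\IE[\sup_{[0,T]}|X^x|^2]\bigr)^{p/2}$, and then the case $p=2$ together with the subadditivity of $t\mapsto t^{p/2}$ on $\IR_+$ yields $\IE[\sup_{[0,T]}|X^x|^p]\leq\bigl(C(1+|x|^2)\bigr)^{p/2}\leq C^{p/2}(1+|x|^p)$.

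So fix $p\geq 2$. Since $\nabla V(y)=p(1+|y|^2)^{p/2-1}y$ and $D^2V(y)=p(1+|y|^2)^{p/2-1}I_d+p(p-2)(1+|y|^2)^{p/2-2}yy^*$, applying It\^o's formula to $V(X_t^x)$ and bounding the finite-variation part via the three elementary estimates $\langle y,\Xi(t,y)\rangle\leq|y|(\xi_1+\xi_2|y|)\leq(\tfrac{\xi_1}{2}+\xi_2)(1+|y|^2)$, $|\sigma(y)|_F^2\leq(r_1+r_2)(1+|y|^2)$, and $|\sigma(y)^*y|^2\leq|\sigma(y)|_F^2|y|^2\leq(r_1+r_2)(1+|y|^2)^2$, I obtain
\[ V(X_t^x)\leq V(x)+C_1\int_0^t V(X_s^x)\,\drm s+M_t,\qquad M_t:=\int_0^t\nabla V(X_s^x)\,\sigma(X_s^x)\,\drm W_s, \]
with $C_1$ depending only on $p,r_1,r_2,\xi_1,\xi_2$ (concretely $C_1=p(\tfrac{\xi_1}{2}+\xi_2)+\tfrac{p(p-1)}{2}(r_1+r_2)$). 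Because $|\nabla V(y)|^2|\sigma(y)|_F^2\leq p^2(r_1+r_2)V(y)^2$ and Theorem~\ref{thm:solution_SDE} applied with exponent $2p$ gives $\IE[\sup_{[0,T]}|X^x|^{2p}]<\infty$, the bracket $\langle M\rangle_T\leq p^2(r_1+r_2)\int_0^TV(X_s^x)^2\,\drm s$ is integrable, so $M$ is a genuine (square-integrable) martingale.

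Taking expectations and applying Gronwall's lemma then gives $\IE[V(X_t^x)]\leq V(x)\erm^{C_1T}$ for every $t\in[0,T]$. To pass to the supremum I take the supremum over $t$ in the displayed inequality, estimate $\IE[\sup_{[0,T]}|M_t|]$ by the Burkholder--Davis--Gundy inequality together with $\langle M\rangle_T^{1/2}\leq p\sqrt{r_1+r_2}\,\bigl(\sup_{[0,T]}V(X^x)\bigr)^{1/2}\bigl(\int_0^TV(X_s^x)\,\drm s\bigr)^{1/2}$, and then use Young's inequality $\sqrt{ab}\leq\tfrac{\delta}{2}a+\tfrac{1}{2\delta}b$ with $\delta$ small enough that the term $\tfrac12\IE[\sup_{[0,T]}V(X^x)]$ can be absorbed into the left-hand side — which is legitimate precisely because $\IE[\sup_{[0,T]}V(X^x)]<\infty$ by Theorem~\ref{thm:solution_SDE}. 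The remaining term is bounded by $\int_0^T\IE[V(X_s^x)]\,\drm s\leq TV(x)\erm^{C_1T}$, so altogether $\IE[\sup_{[0,T]}V(X^x)]\leq C_4V(x)$ with $C_4$ depending only on $p,T,r_1,r_2,\xi_1,\xi_2$; since $|X_t^x|^p\leq V(X_t^x)$ and $V(x)=(1+|x|^2)^{p/2}\leq 2^{p-1}(1+|x|^p)$, the claim follows.

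The routine part (It\^o's formula, Gronwall, BDG) is standard; the points requiring some care are: (i) bounding $\langle y,\Xi(t,y)\rangle$ through the linear growth rather than the dissipativity, so that $\eta_1,\eta_2$ do not enter $C$; (ii) handling the $D^2V$ term when $p>2$, where the factor $(1+|y|^2)^{p/2-2}|\sigma(y)^*y|^2$ must be absorbed into $V(y)$ using $|\sigma(y)^*y|^2\leq(r_1+r_2)(1+|y|^2)^2$; and (iii) the BDG/Young absorption step, which is only valid once $\IE[\sup_{[0,T]}V(X^x)]<\infty$ is known — supplied by Theorem~\ref{thm:solution_SDE}.
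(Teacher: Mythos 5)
Your proof is correct, and it is essentially the argument the paper has in mind: the paper's proof is the one-line remark that the estimate follows from the Burkholder--Davis--Gundy inequality and Gronwall's lemma, which is exactly what you carry out (your It\^o computation on $(1+|y|^2)^{p/2}$, the reduction to $p\geq 2$ by Jensen, and the Young-absorption of the BDG term are just the standard way of filling in that remark, using only the linear growth of $\Xi$ and $\sigma$ so that $C$ depends only on $p,T,r_1,r_2,\xi_1,\xi_2$ as claimed).
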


\begin{proof}~\\
This is a straightforward consequence of Burkholder-Davis-Gundy's inequality and Gronwall's lemma.
\end{proof}

\begin{prop}
Suppose that Assumption \ref{ass_SDE} holds true. Let $p \in (0,+\infty)$, $T>0$ and $\gamma : \IR_+ \times \IR^d \rightarrow \IR^d$ a bounded function. Thanks to Girsanov theorem, the process $\dst \Wtilde_t^x = W_t - \int_0^t \gamma\left(s,X_s^x\right) \, \drm s$ is a Brownian motion under the probability $\IPtilde^{x,T}$, on the interval $[0,T]$. 
When $\dst \sqrt{r_2} \|\gamma\|_\infty + \frac{(p \vee 2)-1}{2} r_2 < \eta_2$, we get: \vspace{-0.2mm}
\[ \sup_{T \geq 0} \widetilde{\IE}^{x,T}\left[\left|X_T^x\right|^p\right] \leq C\left(1+|x|^p\right),\]
where $C$ only depends on $p$, $\eta_1$, $\eta_2$, $r_1$, $r_2$ and $\|\gamma\|_\infty$.
\label{prop:sup_fini}
\end{prop}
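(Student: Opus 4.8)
The plan is to use Girsanov's theorem to rewrite the dynamics of $X^x$ under $\IPtilde^{x,T}$, and then to run a Lyapunov (Khasminskii-type) argument with the function $\phi(y) := \bigl(1+|y|^2\bigr)^{p/2}$: the weak dissipativity of $\Xi$, combined with the smallness of $r_2$ relative to $\eta_2$ imposed by the hypothesis, will produce a differential inequality $\drm \phi(X_t^x) \leq \bigl(-c\,\phi(X_t^x) + C\bigr)\,\drm t + \drm M_t$ with $c > 0$, and Gronwall's lemma will then yield a bound uniform in $T$.

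First I would record the new dynamics: under $\IPtilde^{x,T}$, the process $\Wtilde^x$ is a Brownian motion on $[0,T]$ and, on that interval,
\[ \drm X_t^x = \bigl(\Xi(t,X_t^x) + \sigma(X_t^x)\gamma(t,X_t^x)\bigr)\,\drm t + \sigma(X_t^x)\,\drm \Wtilde_t^x. \]
Since $\gamma$ is bounded, the Girsanov density belongs to $\Lrm^q(\IP)$ for every $q \geq 1$; together with Proposition \ref{majo_X_Sp} and Hölder's inequality, this shows that $X^x$ has finite moments of every order under $\IPtilde^{x,T}$, so that $t \mapsto \IEtilde^{x,T}\bigl[\phi(X_t^x)\bigr]$ is finite and $\IEtilde^{x,T}\bigl[\sup_{s \leq t}\phi(X_s^x)\bigr] < \infty$. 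This a priori finiteness is what will legitimate taking expectations below.

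Next I would apply Itô's formula to $\phi(X_t^x)$. Using $\langle y, \Xi(t,y)\rangle \leq \eta_1 - \eta_2|y|^2$, $|\sigma(y)\gamma(t,y)| \leq |\sigma(y)|_F\,\|\gamma\|_\infty$ and $|\sigma(y)|_F^2 \leq r_1 + r_2|y|^2$, and controlling the second-order term — here one distinguishes the sign of $p-2$, the term $\tfrac{p(p-2)}{2}(1+|y|^2)^{p/2-2}|\sigma(y)^*y|^2$ being discarded when $p < 2$ and bounded by $\tfrac{p(p-2)}{2}(1+|y|^2)^{p/2-1}(r_1+r_2|y|^2)$ when $p \geq 2$ — the drift of $\phi(X_t^x)$ is at most
\[ p\,\bigl(1+|X_t^x|^2\bigr)^{p/2-1}\Bigl[\eta_1 - \eta_2|X_t^x|^2 + \|\gamma\|_\infty\,|X_t^x|\sqrt{r_1+r_2|X_t^x|^2} + \tfrac{(p\vee 2)-1}{2}\bigl(r_1 + r_2|X_t^x|^2\bigr)\Bigr]. \]
The coefficient of $|X_t^x|^2$ inside the bracket is $-\eta_2 + \sqrt{r_2}\,\|\gamma\|_\infty + \tfrac{(p\vee 2)-1}{2}\,r_2 =: -\delta$, which is strictly negative precisely under the stated hypothesis (the $p\vee 2$ appearing because the second-order contribution is worst when $p \geq 2$). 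A couple of Young inequalities — together with the identity $(1+|y|^2)^{p/2-1}|y|^2 = \phi(y) - (1+|y|^2)^{p/2-1}$ and, when $p \geq 2$, the bound $(1+|y|^2)^{p/2-1} \leq \epsilon\,\phi(y) + C_\epsilon$ — then absorb the remaining lower-order terms and give $\drm\phi(X_t^x) \leq \bigl(-c\,\phi(X_t^x) + C\bigr)\,\drm t + \drm M_t$ with $c = c(p,\eta_2,r_2,\|\gamma\|_\infty) > 0$, $C = C(p,\eta_1,\eta_2,r_1,r_2,\|\gamma\|_\infty)$, and $M$ a local martingale under $\IPtilde^{x,T}$.

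Finally I would localize $M$ by stopping times, take $\IEtilde^{x,T}$, and pass to the limit — monotone convergence for the $\phi$-term, whose sign is favourable, and dominated convergence for $\phi\bigl(X_{t\wedge\tau_n}^x\bigr)$ using $\sup_{s \leq t}\phi(X_s^x) \in \Lrm^1(\IPtilde^{x,T})$ from the first step — to obtain that $g(t) := \IEtilde^{x,T}\bigl[\phi(X_t^x)\bigr]$ satisfies $g(t) \leq \phi(x) + \int_0^t\bigl(-c\,g(s) + C\bigr)\,\drm s$. Gronwall's lemma gives $g(t) \leq \erm^{-ct}\phi(x) + \tfrac{C}{c}\bigl(1 - \erm^{-ct}\bigr) \leq \phi(x) + \tfrac{C}{c}$ for all $t \in [0,T]$, a bound that does not depend on $T$; evaluating at $t = T$, using $|X_T^x|^p \leq \phi(X_T^x)$ and $\phi(x) \leq C'(1+|x|^p)$, yields the claim. (For $p \in (0,2)$ one may alternatively just invoke the case $p = 2$, which needs exactly the stated hypothesis since then $p \vee 2 = 2$, and apply Jensen's inequality.) I expect the main obstacle to be the Itô step: carrying the expansion with the correct sign bookkeeping so that $\delta > 0$ comes out under precisely the hypothesis $\sqrt{r_2}\,\|\gamma\|_\infty + \tfrac{(p\vee 2)-1}{2}\,r_2 < \eta_2$; the final Gronwall step is routine but the localization must be handled with some care because $M$ is only known a priori to be a local martingale.
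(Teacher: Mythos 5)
Your proposal is correct and follows essentially the same route as the paper: Itô's formula applied under $\IPtilde^{x,T}$, the dissipativity of $\Xi$ together with $|\sigma(y)|_F^2 \leq r_1+r_2|y|^2$ producing the negative leading coefficient $-\eta_2+\sqrt{r_2}\|\gamma\|_\infty+\tfrac{(p\vee 2)-1}{2}r_2$, Young's inequality to absorb the lower-order terms, and a Gronwall-type comparison giving a bound uniform in $T$. The only differences are cosmetic refinements — you work with $(1+|y|^2)^{p/2}$ instead of $|y|^p$ (handling all $p>0$ in one stroke, where the paper treats $p>2$ and reduces $p\in(0,2)$ to $p=2$ via Jensen, an alternative you also note) and you make the localization and a priori integrability explicit, which the paper leaves implicit.
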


\begin{proof}
The case $p = 2$ is easier, we will treat the case $p > 2$.\\
We will suppose that we have $\sqrt{r_2} \|\gamma\|_\infty + (p-1)\dfrac{r_2}{2} < \eta_2$.\\
By Itô's formula, we get (where $0 \leq t \leq T$):
\begin{align*}
\frac{\drm}{\drm t}& \widetilde{\IE}^{x,T}\left[\left|X_t^x\right|^p\right] \\
  &= p \widetilde{\IE}^{x,T}\left[\left|X_t^x\right|^{p-2} \langle X_t^x, \Xi\left(t,X_t^x\right) + \sigma\left(X_t^x\right) \gamma\left(t, X_t^x\right)\rangle\right] + \frac{p}{2} \widetilde{\IE}^{x,T}\left[\left|X_t^x\right|^{p-2} \left|\sigma\left(X_t^x\right)\right|_F^2\right] \\   
  &\pushright{+ \frac{p(p-2)}{2} \widetilde{\IE}^{x,T}\left[\left|X_t^x\right|^{p-4} \sum_{i=1}^d \left(\sum_{j=1}^d \left(X_t^x\right)_j \sigma\left(X_t^x\right)_{i,j}\right)^2\right]} \\
  &\leq p \widetilde{\IE}^{x,T}\left[\eta_1 \left|X_t^x\right|^{p-2} -\eta_2 \left|X_t^x\right|^p + \left|X_t^x\right|^{p-1} \left|\sigma\left(X_t^x\right)\right|_F \left|\gamma\left(t, X_t^x\right)\right|\right] + \frac{p}{2} \widetilde{\IE}^{x,T}\left[\left|X_t^x\right|^{p-2} \left|\sigma\left(X_t^x\right)\right|_F^2 \right] \\
  &\pushright{+ \frac{p(p-2)}{2} \widetilde{\IE}^{x,T}\left[\left|X_t^x\right|^{p-2} \left|\sigma\left(X_t^x\right)\right|_F^2 \right]} \\
  &\leq \left(-p\eta_2 + p \sqrt{r_2} \|\gamma\|_\infty + \frac{p(p-1)}{2} r_2\right) \widetilde{\IE}^{x,T}\left[\left|X_t^x\right|^p\right] + p\sqrt{r_1} \|\gamma\|_\infty \widetilde{\IE}^{x,T}\left[\left|X_t^x\right|^{p-1}\right] \\
  & \hspace{10.1cm}+ \left(p \eta_1 + \frac{p(p-1)}{2} r_1\right) \widetilde{\IE}^{x,T}\left[\left|X_t^x\right|^{p-2}\right].
\end{align*}
But, using Young's inequality, we can show, for every $\varepsilon > 0$:
\[ \frac{\drm}{\drm t} \widetilde{\IE}^{x,T}\left[\left|X_t^x\right|^p\right] \leq \lambda_\varepsilon \widetilde{\IE}^{x,T}\left[\left|X_t^x\right|^p\right] + \sqrt{r_1} \|\gamma\|_\infty \varepsilon^{-p} + \left(2\eta_1 + (p-1)r_1\right) \varepsilon^{-\frac{p}{2}}. \]
We set $\lambda_\varepsilon := -p\eta_2 + p\sqrt{r_2}\|\gamma\|_\infty + \frac{p(p-1)}{2} r_2 + (p-1) \sqrt{r_1} \|\gamma\|_\infty \varepsilon^{\frac{p}{p-1}} + \left((p-2) \eta_1 + \frac{(p-2)(p-1)}{2} r_1\right) \varepsilon^{\frac{p}{p-2}}$ (for $\varepsilon$ small enough) and this quantity is negative. Hence, for $\varepsilon$ small enough,
\begin{align*}
\frac{\drm}{\drm t} \left(\erm^{-\lambda_\varepsilon t} \widetilde{\IE}^{x,T}\left[\left|X_t^x\right|^p\right]\right)
  &\leq \erm^{-\lambda_\varepsilon t} \left( \sqrt{r_1} \|\gamma\|_\infty \varepsilon^{-p} + \left(2\eta_1 + (p-1)r_1\right) \varepsilon^{-\frac{p}{2}} \right) \\
\widetilde{\IE}^{x,T}\left[\left|X_T^x\right|^p\right]
  &\leq |x|^p + \frac{1}{\left|\lambda_\varepsilon\right|} \left( \sqrt{r_1} \|\gamma\|_\infty \varepsilon^{-p} + \left(2\eta_1 + (p-1)r_1\right) \varepsilon^{-\frac{p}{2}} \right).
\end{align*}
We have been able to conclude the case $p > 2$ because $\lambda_\varepsilon < 0$. When $p \in (0,2)$, and under the Assumption \linebreak $\dst \sqrt{r_2} \|\gamma\|_\infty + \frac{r_2}{2} < \eta_2$, we have $\dst \sup_{T \geq 0} \widetilde{\IE}^{x,T}\left[\left|X_T^x\right|^p\right] \leq C\left(1+|x|^p\right)$. We use Jensen's inequality (with the concavity of $x \mapsto |x|^{\frac{p}{2}}$) and the inequality $\left(1+z\right)^\alpha \leq 1+z^\alpha$ for $z \geq 0$ and $\alpha \in (0,1)$.
\end{proof}

\begin{prop}
Under Assumption \ref{ass_SDE}, the process $X^x$ is irreducible, that is to say:
\[ \forall t > 0,\, \forall x,z \in \IR^d,\, \forall r > 0,\, \IP\left(\left|X_t^x - z\right| < r\right) > 0.\]
\end{prop}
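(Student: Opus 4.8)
The statement is the standard full--support (irreducibility) property of a non--degenerate diffusion, and I would prove it by reducing it, \emph{via} the Stroock--Varadhan support theorem --- or, equivalently, \emph{via} a Girsanov change of drift in the spirit of Proposition~\ref{prop:sup_fini} --- to the elementary fact that a uniformly elliptic diffusion can be steered, with positive probability, along any prescribed smooth curve.

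Fix $t>0$, $x,z\in\IR^d$ and $r>0$, and choose a $\Ccal^1$ path $\varphi:[0,t]\to\IR^d$ with $\varphi(0)=x$ and $\varphi(t)=z$ (for instance a reparametrised segment). Since $\sigma$ takes values in $\GL_d(\IR)$ with $x\mapsto\sigma(x)^{-1}$ bounded, $\Xi$ has at most linear growth uniformly in time, and $\dot\varphi$ is bounded on $[0,t]$, the control
\[ h(s):=\int_0^s \sigma\bigl(\varphi(u)\bigr)^{-1}\bigl(\dot\varphi(u)-\Xi(u,\varphi(u))\bigr)\,\drm u \]
is well defined, of class $\Ccal^1$ with bounded derivative, hence lies in the Cameron--Martin space, and by construction $\varphi$ is exactly the skeleton path associated with $h$, i.e. the solution of $\dot\varphi(s)=\Xi(s,\varphi(s))+\sigma(\varphi(s))\dot h(s)$, $\varphi(0)=x$. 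The support theorem in its time--dependent form (the solution being globally defined by Theorem~\ref{thm:solution_SDE} thanks to the linear growth of the coefficients) then yields $\varphi\in\mathrm{supp}\bigl(\Lcal(X^x)\bigr)$ in $\Ccal([0,t],\IR^d)$; since the open neighbourhood $\{\omega:\sup_{[0,t]}|\omega-\varphi|<r\}$ of $\varphi$ is contained in $\{|\omega(t)-z|<r\}$, we conclude that $\IP(|X_t^x-z|<r)>0$.

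If one prefers a self--contained argument avoiding a general support theorem, the same path $\varphi$ can be exploited through Girsanov's theorem. Fix a smooth cut--off $\chi:\IR^d\to[0,1]$ equal to $1$ on $B\bigl(0,1+\sup_{[0,t]}|\varphi|\bigr)$ and to $0$ outside $B\bigl(0,2+\sup_{[0,t]}|\varphi|\bigr)$, and set $\beta(s,y):=\chi(y)\,\sigma(y)^{-1}\bigl(\dot\varphi(s)-\Xi(s,y)\bigr)$, which is bounded and locally Lipschitz in $y$ uniformly in $s$. Let $\Xbar^x$ solve $\drm\Xbar_s^x=\bigl[\Xi(s,\Xbar_s^x)+\sigma(\Xbar_s^x)\beta(s,\Xbar_s^x)\bigr]\drm s+\sigma(\Xbar_s^x)\,\drm W_s$, $\Xbar_0^x=x$. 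Since $\beta$ is bounded, $\mathcal{E}\bigl(\int_0^{\cdot}\beta(s,X_s^x)\,\drm W_s\bigr)$ is a true martingale on $[0,t]$, so under the equivalent probability it defines the process $X^x$ has, on $[0,t]$, the law of $\Xbar^x$; as the two probabilities are mutually absolutely continuous on $\Fcal_t$, it suffices to show $\IP(|\Xbar_t^x-z|<r)>0$. On the event $\bigl\{\sup_{s\leq t}|\Xbar_s^x-\varphi(s)|<\min(1,r)\bigr\}$ the process $\Xbar^x$ stays where $\chi\equiv1$, so $\Xbar^x-\varphi$ solves the driftless equation $\drm(\Xbar_s^x-\varphi(s))=\sigma(\Xbar_s^x)\,\drm W_s$ started at $0$, whose diffusion matrix is continuous and bounded below (recall $\sigma^{-1}$ is bounded); such a uniformly elliptic martingale remains in an arbitrarily small ball on $[0,t]$ with positive probability, and this event is contained in $\{|\Xbar_t^x-z|<r\}$.

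The unboundedness and merely local Lipschitz character of $\Xi$ only force the cut--off/localisation above and are otherwise harmless. The genuine point --- which I would quote rather than reprove in detail, invoking the references used here (see \cite{DaPrato96}, \cite{Cohen15}, \cite{Mattingly01}) or the Stroock--Varadhan support theorem --- is the last assertion: a non--degenerate diffusion started at the origin remains in an arbitrarily small ball up to a fixed time with positive probability. A direct proof of it would add a strong restoring drift $-c\,(y-\varphi(s))$ inside $\beta$ (still bounded because of the cut--off) and estimate the probability of leaving the small ball for $c$ large; this is the step I expect to require the most care.
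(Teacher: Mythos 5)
Your argument is correct in substance and belongs to the same family as the proof the paper relies on: the paper just invokes Theorem 7.3.1 of \cite{DaPrato96}, whose finite-dimensional content is precisely a controllability-plus-tube-estimate argument (steer the state from $x$ to $z$ along a deterministic path, then show the diffusion stays in a tube around that path with positive probability), and your Girsanov implementation of this idea is the right one here, essentially self-contained where the paper proves nothing beyond the citation. Two caveats. First, drop or downgrade the support-theorem route: the classical Stroock--Varadhan theorem is formulated with the Stratonovich skeleton and requires $\sigma$ at least $\Ccal^1$ (usually $\Ccal^2_b$), whereas Assumption \ref{ass_SDE} only gives Lipschitz $\sigma$; the Itô-form skeleton you write coincides with it only after absorbing a correction term that need not be defined here, so the Girsanov argument should carry the proof. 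Second, the reduction in your second route should be phrased through a stopping time: with $\tau$ the exit time of $\Xbar^x-\varphi$ from the ball of radius $\min(1,r)$, pathwise uniqueness identifies $\Xbar^x_s-\varphi(s)$ for $s\leq\tau$ with the globally defined solution $M$ of $\drm M_s=\sigma\left(M_s+\varphi(s)\right)\drm W_s$, $M_0=0$, and $\left\{\sup_{s\leq t}|M_s|<\min(1,r)\right\}\subset\{\tau>t\}$, so everything rests on the small-ball positivity $\IP\left(\sup_{s\leq t}|M_s|<\epsilon\right)>0$ for a uniformly non-degenerate driftless Itô process with bounded diffusion on the relevant region. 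You correctly identify this as the crux; it is standard and citable (Krylov--Safonov-type hitting/exit estimates, or the support theorem for driftless equations), which leaves you at least at the paper's level of detail. If you do include your restoring-drift proof, be aware that a naive bound on $\IE\left[\sup_{s\leq t}|\hat M_s|^2\right]$ does not become small as the mean-reversion strength grows; one needs an exit-time (Kramers-type) or dyadic-increment estimate, so that step indeed requires the care you anticipate.
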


\begin{proof}~ \\
The proof is very similar to Theorem 7.3.1 of \cite{DaPrato96}.
\end{proof}

\begin{thm}
Suppose Assumption \ref{ass_SDE} holds true. Let $\mu \geq 2$ be such that $\dfrac{\mu-1}{2} r_2 < \eta_2$ and $\phi : \IR^d \rightarrow \IR^d$ a measurable function, satisfying $|\phi(x)| \leq c_\phi \left(1+|x|^\mu\right)$.
Then we have,
\[\forall x,y \in \IR^d,\, \forall t \geq 0,\, 
\left| \Pcal_t[\phi](x) - \Pcal_t[\phi](y)\right| \leq \chat c_\phi \left(1+|x|^\mu+|y|^\mu\right) \erm^{-\nuhat t},\]
where $\Pcal_t[\phi](x) = \IE\left[\phi\left(X_t^x\right)\right]$ (Kolmogorov semigroup of $X$) and $\chat$ and $\nuhat$ only depend on $\eta_1$, $\eta_2$, $r_1$, $r_2$ and $\mu$.
\label{thm:coupling_estimate}
\end{thm}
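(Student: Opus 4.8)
I would deduce the estimate from a geometric ergodicity property of the Kolmogorov semigroup in a Lyapunov‑weighted total variation distance. Put $V(z):=1+|z|^\mu$; since $|\phi|\leq c_\phi V$, for any coupling $(\Xbar,\Xbar')$ of the laws of $X_t^x$ and $X_t^y$ with a coupling time $\tau$ (so $\Xbar_s=\Xbar'_s$ for $s\geq\tau$) one has
\[\left|\Pcal_t[\phi](x)-\Pcal_t[\phi](y)\right|\leq\IEbar\left[\left(|\phi(\Xbar_t)|+|\phi(\Xbar'_t)|\right)\ind_{\tau>t}\right]\leq c_\phi\,\IEbar\left[\left(V(\Xbar_t)+V(\Xbar'_t)\right)\ind_{\tau>t}\right],\]
so it suffices to build, for all $x,y$, such a coupling whose right‑hand side is bounded by a constant times $(V(x)+V(y))\erm^{-\nuhat t}$. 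By a (weighted) Harris‑type argument this follows once one has a geometric drift inequality for $V$ and a minorisation of $\Pcal_{t_0}$ on the sub‑level sets of $V$.

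\emph{Step 1 (geometric drift).} Running Itô's formula on $|X_t^x|^\mu$ exactly as in the proof of Proposition \ref{prop:sup_fini}, but with $\gamma\equiv 0$, gives
\[\frac{\drm}{\drm t}\IE\left[|X_t^x|^\mu\right]\leq\left(-\mu\eta_2+\frac{\mu(\mu-1)}{2}r_2\right)\IE\left[|X_t^x|^\mu\right]+\left(\mu\eta_1+\frac{\mu(\mu-1)}{2}r_1\right)\IE\left[|X_t^x|^{\mu-2}\right],\]
and the leading coefficient is strictly negative precisely because $\frac{\mu-1}{2}r_2<\eta_2$. Absorbing the lower‑order term by Young's inequality yields $\frac{\drm}{\drm t}\IE[|X_t^x|^\mu]\leq-c\,\IE[|X_t^x|^\mu]+C$ with $c>0$, hence $\IE[|X_t^x|^\mu]\leq\erm^{-ct}|x|^\mu+C/c$; in particular $\Pcal_{t_0}[V]\leq\gamma_0 V+K$ with $\gamma_0:=\erm^{-ct_0}\in(0,1)$, and on a bounded interval the same computation gives $\Pcal_s[V]\leq C_{t_0}V$ for $s\in[0,t_0]$.

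\emph{Step 2 (minorisation on balls).} This is the technical core. Fix a large radius $R$ and take $x,y$ with $|x|\vee|y|\leq R$. I would drive $X^x$ by $W$ and build a copy $\Xtilde$ of $X^y$ by Girsanov: under a measure $\IPtilde^{x,y}$ the process $\Xtilde$ solves (\ref{eq:SDE}) from $y$, while under $\IP$ it obeys $\drm\Xtilde_t=[\Xi(t,\Xtilde_t)+\sigma(\Xtilde_t)\gamma(t,\cdot)]\,\drm t+\sigma(\Xtilde_t)\,\drm W_t$ with $\gamma$ a bounded feedback pulling $\Xtilde_t$ toward $X_t$; because $\sigma^{-1}$ is bounded one can make $\|\gamma\|_\infty$ as small as desired, and since $\frac{\mu-1}{2}r_2<\eta_2$ strictly, Proposition \ref{prop:sup_fini} applies under $\IPtilde^{x,y}$ and keeps the $\mu$‑moments of $\Xtilde$ under control. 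The difference $D_t=X_t-\Xtilde_t$ then has a drift pushing it toward $0$ while its diffusion coefficient $\sigma(X_t)-\sigma(\Xtilde_t)$ is $O(|D_t|)$, so with a fixed positive probability $X$ and $\Xtilde$ become, at some fixed time, arbitrarily close and simultaneously confined to a fixed ball. Finally, invoking the irreducibility of the forward process together with the non‑degeneracy of $\sigma$ ($\sigma(z)\sigma(z)^*$ is bounded below since $\sigma^{-1}$ is bounded), from two points that are close and inside a fixed ball one can complete the coupling so that $X$ and $\Xtilde$ coincide from a fixed time onward, still with a fixed positive probability; this is exactly the place where \cite{Madec15} used a bridge, and it is avoided here at the cost of requiring non‑degeneracy. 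Altogether $\Pcal_{t_0}(x,\cdot)\geq\alpha\,\nu$ for $|x|\leq R$, for a fixed probability measure $\nu$ and $\alpha>0$ (after enlarging $t_0$ if needed).

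\emph{Step 3 (assembling, and the main difficulty).} With the drift of Step 1 and the minorisation of Step 2, a weighted Harris argument gives a contraction $d(\Pcal_{t_0}\mu_1,\Pcal_{t_0}\mu_2)\leq q\,d(\mu_1,\mu_2)$, $q\in(0,1)$, in the metric $d(\mu_1,\mu_2):=\sup\{|\mu_1(f)-\mu_2(f)|:|f|\leq\beta V\}$ for a suitable $\beta$; applying this to $\delta_x,\delta_y$ and iterating, $|\Pcal_{nt_0}[\phi](x)-\Pcal_{nt_0}[\phi](y)|\leq c_\phi\,q^{\,n}(V(x)+V(y))$ up to a fixed factor. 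For general $t$, write $t=nt_0+s$ with $s\in[0,t_0)$, use the semigroup property and $|\Pcal_s[\phi]|\leq c_\phi\Pcal_s[V]\leq c_\phi C_{t_0}V$ from Step 1; this gives the assertion with $\nuhat:=t_0^{-1}\log q^{-1}$ and $\chat$ collecting the fixed constants, all depending only on $\eta_1,\eta_2,r_1,r_2,\mu$. The hard part is Step 2: engineering a feedback control admissible for Girsanov while keeping the auxiliary process's moments finite when $\sigma$ is unbounded and multiplicative (the purpose of Proposition \ref{prop:sup_fini}), and, above all, upgrading ``the two copies are close and confined to a ball'' to ``the two copies coincide'' using only the irreducibility of $X^x$ rather than the bridge construction of \cite{Madec15}.
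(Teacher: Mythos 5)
Your skeleton is the same one the paper uses: a geometric drift condition for $V(x)=|x|^\mu$ (your Step 1 is literally the paper's computation of $LV$, and the strict negativity of the leading coefficient is exactly the hypothesis $\frac{\mu-1}{2}r_2<\eta_2$), a minorisation on sublevel sets of $V$, and a weighted Harris-type contraction giving the exponential rate with constants depending only on $\eta_1,\eta_2,r_1,r_2,\mu$. The difference is that the paper does not reprove the Harris step at all: it verifies the Lyapunov condition, notes that hypoellipticity follows from the boundedness of $\sigma^{-1}$ and that the remaining hypothesis follows from irreducibility of $X^x$, and then invokes Theorem A.2 of \cite{Mattingly01} (with \cite{MT1}, \cite{MT3} to track the dependence of $\chat$ and $\nuhat$). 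So Steps 1 and 3 of your proposal are fine and match the cited machinery.

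The genuine gap is in your Step 2, and it sits exactly where you flag ``the hard part''. Driving the copy $\Xtilde$ toward $X$ by a bounded Girsanov feedback can indeed make the two processes close and confined to a ball with positive probability, but the final upgrade -- ``from two close points one can complete the coupling so that $X$ and $\Xtilde$ coincide from a fixed time onward, by irreducibility'' -- is not an argument: irreducibility only says that each single process charges every open set; it provides no mechanism for exact coalescence of two copies (this is precisely the step for which \cite{Madec15} needed a bridge, and which the present paper avoids by a different mechanism, not by irreducibility alone). What actually closes the argument is the non-degeneracy: since $\sigma^{-1}$ is bounded, $\sigma\sigma^*$ is uniformly elliptic, so the semigroup is smoothing (strong Feller, with a strictly positive transition density), and combined with irreducibility this yields the local Doeblin minorisation $\Pcal_{t_0}(z,\cdot)\geq\alpha\nu$ uniformly over $|z|\leq R$ directly; coalescence is then produced by the Nummelin/maximal coupling attached to this minorisation, not by any pathwise feedback. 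This is exactly the content of the hypoellipticity-plus-irreducibility hypotheses of Theorem A.2 of \cite{Mattingly01} that the paper checks. Note also that the Girsanov detour and the appeal to Proposition \ref{prop:sup_fini} are unnecessary for this theorem (the drift computation with $\gamma\equiv0$ suffices); the Girsanov perturbation only becomes relevant in Corollary \ref{cor:semigroup}, where the drift is modified by a bounded term and the constant $\sqrt{r_2}\|\rho\|_\infty$ enters the smallness condition.
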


\begin{proof}~\\
This proof is based on the application of the Theorem A.2 of \cite{Mattingly01}. Hypoellipticity is a consequence of the boundedness of $\sigma^{-1}$. In order to show that the monomial function $V: x \mapsto |x|^\mu$ is a Lyapunov function, it suffices to show that it satisfies for all $x \in \IR^d$: $LV(x) <-aV(x)+b \ind_C(x)$, where $a$ and $b$ are positive constants, $C$ a compact set and $L$ the generator of $X$. We have:
\begin{align*}
LV(x) 
   &= \mu |x|^{\mu-2} \langle x, \Xi(0,x) \rangle + \frac{\mu}{2} |x|^{\mu-2} |\sigma(x)|_F^2 + \frac{\mu(\mu-2)}{2} |x|^{\mu-4} \sum_{i=1}^d \left(\sum_{j=1}^d x_j \sigma(x)_{i,j}\right)^2 \\
   &\leq \left(-\mu \eta_2 + \frac{\mu(\mu-1)}{2} r_2\right) |x|^\mu + \left(\mu\eta_1 + \frac{\mu(\mu-1)}{2} r_1\right) |x|^{\mu-2}
\end{align*}
One can check that the required inequality can be obtained by setting $C = \overline{\Bcal}(0,R)$, $\dst R = \sqrt{\frac{\eta_1 + \frac{\mu-1}{2} r_1}{\eta_2 - \frac{\mu-1}{2} r_2}}+1$, \linebreak $\dst a = \mu\eta_2 - \frac{\mu(\mu-1)}{2} r_2 - \frac{b}{R^2}$ and $\dst b = \mu \eta_1 + \frac{\mu(\mu-1)}{2} r_1$.\\
The last thing we need to apply Theorem A.2 of \cite{Mattingly01} is a consequence of  irreducibility of $X^x$. Then, we get:
\[ \left|\Pcal_t[\phi](x) - \Pcal_t[\phi](y)\right| \leq \chat c_\phi\left(1+|x|^\mu+|y|^\mu\right) \erm^{- \nuhat t}.\]
See also \cite{MT1} and \cite{MT3} to prove that $\chat$ and $\nuhat$ only depend on $\eta_1$, $\eta_2$, $r_1$, $r_2$ and $\mu$.
\end{proof}

\begin{ass}
We can write $\Xi(t,x) = \xi(x) + \rho(t,x)$ where $\xi$ and $\rho$ satisfy the following:
\begin{itemize}
\item $\rho$ is a bounded function;
\item $\xi$ is Lipschitz continuous and satisfies $\langle \xi(x),x\rangle \leq \eta_1 - \eta_2 |x|^2$, with $\eta_1$ and $\eta_2$ two positive constants;
\item $\rho$ is the pointwise limit of a sequence $\left(\rho_n\right)$ of $\Ccal^1$ functions, with bounded derivatives w.r.t. $x$ and uniformly bounded by $\|\rho\|_\infty$.
\end{itemize}
As before, we still require on $\sigma$ the following:
\begin{itemize}
\item $\sigma : \IR^d \rightarrow \GL_d(\IR)$ is Lipschitz continuous;
\item the function $x \mapsto \sigma(x)^{-1}$ is bounded;
\item $\forall x \in \IR^d,\, |\sigma(x)|_F^2 \leq r_1 + r_2|x|^2$.
\end{itemize}
\label{ass_SDE_nonLip}
\end{ass}

\begin{cor}
Suppose Assumption \ref{ass_SDE_nonLip} holds true. Let $\mu \geq 2$ be such that $\sqrt{r_2} \|\rho\|_\infty + \dfrac{\mu-1}{2} r_2 < \eta_2$ and $\phi : \IR^d \rightarrow \IR^d$ measurable with $|\phi(x)| \leq c_\phi \left(1+|x|^\mu\right)$.
Then we have,
\[\forall x,y \in \IR^d,\, \forall t \geq 0,\, 
\left| \Pcal_t[\phi](x) - \Pcal_t[\phi](y)\right| \leq \chat c_\phi \left(1+|x|^\mu+|y|^\mu\right) \erm^{-\nuhat t},\]
where $\chat$ and $\nuhat$ only depend on $\eta_1$, $\eta_2$, $r_1$, $r_2$, $\mu$ and $\|\rho\|_\infty$.
\label{cor:semigroup}
\end{cor}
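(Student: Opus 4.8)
The plan is to regularise the drift using the sequence $(\rho_n)$ of Assumption \ref{ass_SDE_nonLip}, apply Theorem \ref{thm:coupling_estimate} to each regularised problem, and pass to the limit. First I would set $\Xi_n(t,x):=\xi(x)+\rho_n(t,x)$ and check that $\Xi_n$ satisfies Assumption \ref{ass_SDE} with constants independent of $n$: it is Lipschitz in $x$ uniformly in $t$ (since $\xi$ is Lipschitz and $\rho_n$ has uniformly bounded $x$-derivative), it has linear growth, and $\langle\Xi_n(t,x),x\rangle\le\eta_1+\|\rho\|_\infty|x|-\eta_2|x|^2$; one use of Young's inequality rewrites the right-hand side as $\eta_1'-\eta_2'|x|^2$, with $\eta_1'$ depending only on $\eta_1,\eta_2,r_2,\mu,\|\rho\|_\infty$ and $\eta_2'=\tfrac12\bigl(\eta_2+\tfrac{\mu-1}{2}r_2\bigr)$, which satisfies $\tfrac{\mu-1}{2}r_2<\eta_2'<\eta_2$ because $\sqrt{r_2}\|\rho\|_\infty+\tfrac{\mu-1}{2}r_2<\eta_2$ forces $\tfrac{\mu-1}{2}r_2<\eta_2$. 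The hypotheses on $\sigma$ are untouched. Writing $X^{n,x}$ for the strong solution of (\ref{eq:SDE}) driven by $\Xi_n$ (Theorem \ref{thm:solution_SDE}) and $\Pcal^n_t$ for its Kolmogorov semigroup, Theorem \ref{thm:coupling_estimate} applies and yields
\[ \left|\Pcal^n_t[\phi](x)-\Pcal^n_t[\phi](y)\right|\le\chat c_\phi\bigl(1+|x|^\mu+|y|^\mu\bigr)\erm^{-\nuhat t},\]
with $\chat,\nuhat$ depending only on $\eta_1,\eta_2,r_1,r_2,\mu,\|\rho\|_\infty$ --- in particular not on $n$.

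It then suffices to show $\Pcal^n_t[\phi](x)\to\Pcal_t[\phi](x)$ for each fixed $t\ge0$, $x\in\IR^d$ and each measurable $\phi$ with $|\phi(x)|\le c_\phi(1+|x|^\mu)$, and to let $n\to\infty$ in the displayed inequality. I would prove this convergence by a change of measure, not by pathwise stability, because the $\rho_n$ need not be uniformly Lipschitz. Let $X^x$ solve (\ref{eq:SDE}) with drift $\Xi=\xi+\rho$ and put $\zeta^n_s:=\sigma(X^x_s)^{-1}\bigl(\rho_n(s,X^x_s)-\rho(s,X^x_s)\bigr)$, bounded by $M:=2\|\sigma^{-1}\|_\infty\|\rho\|_\infty$ uniformly in $n$ and $s$. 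Its Dol\'eans--Dade exponential $\mathcal{E}^n$ is a genuine martingale (bounded integrand), so $\IP^n:=\mathcal{E}^n_t\cdot\IP$ on $\Fcal_t$ is a probability under which $W-\int_0^\cdot\zeta^n_s\,\drm s$ is a Brownian motion on $[0,t]$, and under $\IP^n$ the process $X^x$ solves (\ref{eq:SDE}) with drift $\Xi_n$ on $[0,t]$; by uniqueness in law this gives $\Pcal^n_t[\phi](x)=\IE\bigl[\mathcal{E}^n_t\,\phi(X^x_t)\bigr]$.

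Finally I would pass to the limit in $\IE[\mathcal{E}^n_t\,\phi(X^x_t)]$. Since $\rho_n\to\rho$ pointwise, $\zeta^n_s\to0$ boundedly, so $\int_0^t|\zeta^n_s|^2\,\drm s\to0$ and $\int_0^t\zeta^n_s\,\drm W_s\to0$ in $\Lrm^2$, hence $\mathcal{E}^n_t\to1$ in probability; the standard bound $\IE[(\mathcal{E}^n_t)^q]\le\erm^{q(q-1)M^2t/2}$ for $q\ge1$ makes $\{\mathcal{E}^n_t\}_n$ bounded in every $\Lrm^q$ and upgrades this to convergence in every $\Lrm^q$. As $\phi(X^x_t)\in\Lrm^2$ by the polynomial growth of $\phi$ and Proposition \ref{majo_X_Sp} (whose proof uses only the linear growth retained in Assumption \ref{ass_SDE_nonLip}), Cauchy--Schwarz gives $\bigl|\Pcal^n_t[\phi](x)-\Pcal_t[\phi](x)\bigr|\le\IE[(\mathcal{E}^n_t-1)^2]^{1/2}\,\|\phi(X^x_t)\|_{\Lrm^2}\to0$, which finishes the proof. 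The step needing care is this last limit for a general, possibly discontinuous, $\phi$: the change of measure is exactly what makes it go through, reducing the question to $\Lrm^2$-convergence of the densities $\mathcal{E}^n_t$ tested against the fixed random variable $\phi(X^x_t)$, which is why plain weak convergence of the laws of $X^{n,x}_t$ would not be enough.
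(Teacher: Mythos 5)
Your proposal is correct, and it follows the paper's overall strategy (regularise the drift, invoke Theorem \ref{thm:coupling_estimate} with constants uniform in $n$, then pass to the limit by a change of measure), but the limit step is genuinely different and in fact simpler. The paper does not compare $X^{n,x}$ with $X^x$ directly: it introduces an auxiliary process $U^x$ driven by the dissipative part $\xi$ alone, writes both $\IE[\phi(X_t^{n,x})]$ and the limiting expectation as integrals of $\phi(U_t^x)$ against inverse Girsanov densities $p_t^n(U^x)^{-1}$, and concludes by pointwise convergence of these densities plus a uniform $\Lrm^2$ bound giving uniform integrability. You instead write the law of $X^{n,x}$ directly as a density $\mathcal{E}^n_t$ against the law of $X^x$ (shift $\sigma^{-1}(\rho_n-\rho)$, bounded uniformly in $n$), prove $\mathcal{E}^n_t\to1$ in $\Lrm^2$, and finish by Cauchy--Schwarz against $\phi(X^x_t)\in\Lrm^2$; this buys you a one-step argument with no auxiliary process and no separate uniform-integrability discussion, at the (implicit, and shared with the paper) cost of taking for granted weak existence and uniqueness in law for the non-Lipschitz limiting SDE so that $\Pcal_t$ is well defined. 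One caveat worth noting: you read Assumption \ref{ass_SDE_nonLip} literally, with the perturbation $\rho$ itself bounded, in which case only $\frac{\mu-1}{2}r_2<\eta_2$ is really needed and the term $\sqrt{r_2}\|\rho\|_\infty$ in the hypothesis plays no role; the paper's own proof (and the later uses of the corollary, where the drift is $\Xi+\sigma\Gamma^*$ with $\Gamma$ bounded) actually treats a perturbation of the form $\sigma_n(x)\rho_n(t,x)$, which is unbounded and is precisely what forces the $\sqrt{r_2}\|\rho\|_\infty$ loss in dissipativity and the extra approximation $\sigma_n$ making $\xi+\sigma_n\rho_n$ Lipschitz. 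Your Girsanov limit step adapts to that setting essentially unchanged (the shift becomes $\rho_n-\rho$, still bounded), but your dissipativity check and the Lipschitz verification for $\Xi_n$ would then need the paper's extra construction.
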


\begin{proof}~\\
We set $\sigma_n$ a function close to $\sigma$ on the centered ball of $\IR^d$ of radius $n$, equal to $\Irm_d$ outside the centered ball of radius $n+1$ and of class $\Ccal^1$ with bounded derivatives on $\IR^d$; on the ring between the radius $n$ and $n+1$, $\sigma_n$ is chosen in such a way that $\sigma^{-1} \sigma_n$ is bounded, independently from $n$.
This way, the function $\Xi_n : (t,x) \mapsto \xi(x) + \sigma_n(x) \rho_n(t,x)$ is Lipschitz continuous w.r.t. $x$.
We denote $X^{n,x}$ the solution of the SDE:
\[ \left\{ \begin{array}{l}
           \drm X_t^{n,x} = \Xi_n\left(t, X_t^{n,x}\right) \, \drm t + \sigma\left(X_t^{n,x}\right) \, \drm W_t, \\
           X_0^{n,x} = x.
           \end{array} \right. \]
We can write, for every $\varepsilon > 0$:
\[ \langle \Xi_n(t,x), x \rangle \leq \eta_1 - \eta_2 |x|^2 + \left(\sqrt{r_1} + \sqrt{r_2}|x|\right) \|\rho\|_\infty |x| \leq \left[\eta_1 + \frac{\|\rho\|_\infty^2 r_1}{2 \varepsilon^2}\right] - \left[\eta_2 - \sqrt{r_2} \|\rho\|_\infty - \frac{\varepsilon^2}{2}\right] |x|^2.\]
Then, when $\varepsilon$ is small enough, for any $\phi$ with polynomial growth of degree $\mu$, Theorem \ref{thm:coupling_estimate} tells us that $\chat_n$ and $\nuhat_n$ are independent of $n$, and we have: 
\[ \exists \chat, \nuhat >0,\, \forall n \in \IN^*,\, \forall x,y \in \IR^d,\, \forall t \geq 0,\, \left| \IE\left[\phi\left(X_t^{n,x}\right)\right] - \IE\left[\phi\left(X_t^{n,y}\right)\right] \right| \leq \chat c_\phi \left(1+|x|^\mu+|y|^\mu\right) \erm^{-\nuhat t}.\]
Our goal is to take the limit ; let us show that $\IE\left[\phi\left(X_t^{n,x}\right)\right] \limit{n}{\infty} \IEtilde\left[\phi\left(X_t^x\right)\right]$.
Let $U$ be the solution of the SDE 
\[ U_t^x = x + \int_0^t{\xi\left(U_s^x\right) \, \drm s} + \int_0^t{\sigma\left(U_s^x\right)\, \drm W_s}. \]
We can write:
$\dst X_t^{n,x} = x + \int_0^t{\xi\left(X_s^{n,x}\right) \, \drm s} + \int_0^t{\sigma\left(X_s^{n,x}\right) \, \drm W_s^{(n)}}$,
where $\dst W_t^{(n)} = W_t + \int_0^t{\sigma\left(X_s^{n,x}\right)^{-1} \sigma_n\left(X_s^{n,x}\right)\rho_n\left(s,X_s^{n,x}\right) \, \drm s}$ is a Brownian motion under the probability $\IP^{(n)} = p_T^n\left(X^{n,x}\right) \IP$ on $[0,T]$ and where
\[ p_t^n\left(X^{n,x}\right) = \exp\left(-\int_0^t{\langle \sigma\left(X_s^{n,x}\right)^{-1}\sigma_n\left(X_s^{n,x}\right)\rho_n\left(s,X_s^{n,x}\right), \drm W_s \rangle} - \frac{1}{2} \int_0^t{\left|\sigma\left(X_s^{n,x}\right)^{-1} \sigma_n\left(X_s^{n,x}\right) \rho_n\left(s,X_s^{n,x}\right)\right|^2 \, \drm s}\right). \]
Similarly,
$\dst X_t^x = x + \int_0^t{\xi\left(X_s^x\right) \, \drm s} + \int_0^t{\sigma\left(X_s^x\right) \, \drm W_s^{(\infty)}}$,
where $\dst W_t^{(\infty)} = \Wtilde_t + \int_0^t{\rho\left(s,X_s^x\right)\, \drm s}$ is a Brownian motion under the probability $\IP^{(\infty)} = p_T^\infty\left(X^x\right) \IPtilde$ on $[0,T]$ and where
\[p_t^\infty\left(X^x\right) = \exp\left(-\int_0^t{\langle \rho\left(s,X_s^x\right), \drm \Wtilde_s \rangle} - \frac{1}{2} \int_0^t{\left|\rho\left(s,X_s^x\right)\right|^2 \, \drm s}\right).\]
By uniqueness in law of the solutions of the SDEs, we get the equalities:
\[ \IE\left[\phi\left(X_t^{n,x}\right)\right] = \IE^{(n)}\left[p^n_t\left(X^{n,x}\right)^{-1} \phi\left(X_t^{n,x}\right)\right] = \IE\left[p^n_t\left(U^x\right)^{-1} \phi\left(U_t^x\right)\right] ;\]
\[ \IEtilde\left[\phi\left(X_t^x\right)\right] = \IE^{(\infty)}\left[p^\infty_t\left(X^x\right)^{-1} \phi\left(X_t^x\right)\right] = \IE\left[p^\infty_t\left(U^x\right)^{-1} \phi\left(U_t^x\right)\right]. \]
But $\rho_n(t,x) \limite{}{n}{\infty} \rho(t,x)$, so we have $p_t^n\left(U^x\right)^{-1} \limite{\IP}{n}{\infty} p_t^\infty\left(U^x\right)^{-1}$.
We just have to show that the sequence $\left(p_t^n\left(U^x\right)^{-1}\right)_{n \in \IN^*}$ is uniformly integrable, in order to show that $\IE\left[\phi\left(X_t^{n,x}\right)\right] \limit{n}{\infty} \IEtilde\left[\phi\left(X_t^x\right)\right]$.
We have:
\begin{align*}
\IE&\left[\left(p_t^n\left(U^x\right)^{-1}\right)^2\right] \\
  &= \IE\left[\exp\left(2 \int_0^t{\langle \sigma\left(X_s^{n,x}\right)^{-1} \sigma_n\left(X_s^{n,x}\right) \rho_n\left(s,X_s^{n,x}\right),\drm W_s\rangle} + \int_0^t{\left|\sigma\left(X_s^{n,x}\right)^{-1} \sigma_n\left(X_s^{n,x}\right) \rho_n\left(s,X_s^{n,x}\right)\right|^2 \, \drm s}\right)\right] \\
	&\leq \IE\left[\exp\left(\int_0^t{\langle 4 \sigma\left(X_s^{n,x}\right)^{-1} \sigma_n\left(X_s^{n,x}\right) \rho_n\left(s,X_s^{n,x}\right) , \drm W_s \rangle} - \frac{1}{2} \int_0^t{\left|4\sigma\left(X_s^{n,x}\right)^{-1} \sigma_n\left(X_s^{n,x}\right) \rho_n\left(s,X_s^{n,x}\right)\right|^2 \, \drm s}\right)\right]^{\frac{1}{2}} \\
	&\pushright{\IE\left[\exp\left(10 \int_0^t{\left|\sigma\left(X_s^{n,x}\right)^{-1} \sigma_n\left(X_s^{n,x}\right) \rho_n\left(s,X_s^{n,x}\right)\right|^2 \, \drm s}\right)\right]^{\frac{1}{2}}} \\
	&\leq \exp\left(5t \left(d^2 + \left\|\sigma^{-1}\right\|_\infty^2\right) \|\rho\|_\infty\right) < \infty.
\end{align*}
\end{proof}

\section{The EBSDE}

We consider the following EBSDE:
\begin{equation}
\forall 0 \leq t \leq T < \infty,\, Y_t^x = Y_T^x + \int_t^T \left[\psi\left(X_s^x, Z_s^x\right) - \lambda\right] \, \drm s - \int_t^T Z_s^x \, \drm W_s
\label{eq:EBSDE}
\end{equation}
and we make on $\psi : \IR^d \times \left(\IR^d\right)^* \rightarrow \IR$ and the SDE (\ref{eq:SDE}) satisfied by $X^x$ the following assumptions.

\begin{ass}
\begin{itemize}
\item $\forall x \in \IR^d,\, |\psi(x,0)| \leq M_\psi (1+|x|)$;
\item $\forall x, x' \in \IR^d,\, \forall z,z' \in \IR^d,\, \left|\psi(x,z) - \psi\left(x',z'\right)\right| \leq K_x \left|x-x'\right| + K_z \left|z\sigma(x)^{-1}-z'\sigma(x')^{-1}\right|$;
\item $\Xi : \IR^d \rightarrow \IR^d$ is Lipschitz continuous, $|\Xi(x)| \leq \xi_1 + \xi_2 |x|$;
\item $\Xi(x) = \xi(x) + b(x)$, with $\xi$ dissipative and locally Lipschitz and $b$ bounded, and $\langle \Xi(x),x \rangle \leq \eta_1 - \eta_2|x|^2$ for two positive constants $\eta_1$, $\eta_2$;
\item $\sigma : \IR^d \rightarrow \GL_d(\IR)$ is Lipschitz continuous;
\item $x \mapsto \sigma(x)^{-1}$ is bounded and $|\sigma(x)|_F^2 \leq r_1 + r_2|x|^2$;
\item $\dst \sqrt{r_2} K_z \left\|\sigma^{-1}\right\|_\infty + \frac{r_2}{2} < \eta_2$.
\end{itemize}
\label{ass_EBSDE}
\end{ass}

\begin{rmq}
In most papers, the function $\psi$ is assumed to be Lipschitz continuous. We make a slight modification of this assumption in order to have some information about the second and third behaviour (see \cite{PDE}). But $\psi$ is still Lipschitz continuous w.r.t. $z$, with a constant equal to $K_z \left\|\sigma^{-1}\right\|_\infty$. Moreover, $\psi$ is still continuous w.r.t. $x$.
\end{rmq}~\\

\subsection{Existence of a solution}

Theorem \ref{thm:solution_SDE} ensures that the process $X^x$ is well defined. We introduce a new parameter $\alpha > 0$ and we consider a new BSDE of infinite horizon:
\begin{equation}
\forall 0 \leq t \leq T < \infty,\, Y_t^{\alpha,x} = Y_T^{\alpha,x} + \int_t^T \left[\psi\left(X_s^x, Z_s^{\alpha,x}\right) - \alpha Y_s^{\alpha,x}\right] \, \drm s - \int_t^T Z_s^{\alpha,x} \, \drm W_s.
\label{eq:BSDE}
\end{equation}

\begin{lem}
Under Assumption \ref{ass_EBSDE}, for every $x \in \IR^d$ and $\alpha > 0$, there exists a unique solution $\left(Y^{\alpha,x}, Z^{\alpha,x}\right)$ to the BSDE (\ref{eq:BSDE}), such that $Y^{\alpha,x}$ is a continuous process bounded in $\Lrm^1$ and $Z^{\alpha,x} \in \Lrm^2_{\Pcal, \text{loc}}\left(\Omega, \Lrm^2\left(0, \infty; \left(\IR^d\right)^*\right)\right)$. Also, for every $t \geq 0$, $\left|Y_t^{\alpha,x}\right| \leq \dfrac{C}{\alpha} \left(1+\left|X_t^x\right|\right)$, $\IP$-a.s., where $C$ only depends on $M_\psi$, $\eta_1$, $\eta_2$, $r_1$, $r_2$, $K_z$ and $\left\|\sigma^{-1}\right\|_\infty$.
The function $v^\alpha : x \mapsto Y_0^{\alpha,x}$ is continuous, and for every $t \geq 0$, $Y_t^{\alpha,x} = v^\alpha\left(X_t^x\right)$ $\IP$-a.s.
\label{lem:EU_BSDE}
\end{lem}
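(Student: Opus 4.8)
The plan is to build $(Y^{\alpha,x},Z^{\alpha,x})$ as the limit, as $T\to\infty$, of the solutions $(Y^{\alpha,x,T},Z^{\alpha,x,T})$ of the finite-horizon BSDEs on $[0,T]$ with driver $(s,y,z)\mapsto\psi(X_s^x,z)-\alpha y$ and terminal condition $0$. For fixed $T$ this equation fits the classical framework of El Karoui--Peng--Quenez: by the twisted Lipschitz assumption on $\psi$ and boundedness of $\sigma^{-1}$ the driver is Lipschitz in $z$ with constant $K_z\|\sigma^{-1}\|_\infty$, it is affine (hence Lipschitz) in $y$, and $s\mapsto\psi(X_s^x,0)$ lies in every $\Lrm^p$ by Proposition~\ref{majo_X_Sp} together with $|\psi(x,0)|\le M_\psi(1+|x|)$. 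This gives a unique solution in the usual spaces, and — using the flow-continuity of $x\mapsto X^x$ coming from Theorem~\ref{thm:solution_SDE} — the Markov representation $Y_t^{\alpha,x,T}=v^{\alpha,T-t}(X_t^x)$ for continuous functions $v^{\alpha,\cdot}$; in particular $Z^{\alpha,x,T}_s$ is a measurable function of $(s,X_s^x)$.

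The heart of the matter is a bound on $Y^{\alpha,x,T}$ uniform in $T$. Linearising, $\psi(X_s^x,Z_s^{\alpha,x,T})=\psi(X_s^x,0)+Z_s^{\alpha,x,T}\gamma(s,X_s^x)$ with $|\gamma|\le K_z\|\sigma^{-1}\|_\infty$, a change of variable (the $-\alpha y$ term producing the factor $\erm^{-\alpha(s-t)}$) together with Girsanov's theorem on $[0,T]$ yields an equivalent probability $\IPhat$ (depending on $T$) under which $X^x$ still solves (\ref{eq:SDE}) but with the extra bounded drift $\sigma(X_s^x)\gamma(s,X_s^x)$, and $Y_t^{\alpha,x,T}=\IEhat\!\left[\int_t^T\erm^{-\alpha(s-t)}\psi(X_s^x,0)\,\drm s\,\middle|\,\Fcal_t\right]$. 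The smallness hypothesis $\sqrt{r_2}K_z\|\sigma^{-1}\|_\infty+\tfrac{r_2}{2}<\eta_2$ in Assumption~\ref{ass_EBSDE} is exactly what makes Proposition~\ref{prop:sup_fini} applicable (with $p=1$ and $\|\gamma\|_\infty\le K_z\|\sigma^{-1}\|_\infty$); combined with the Markov property it gives $\IEhat[|X_s^x|\mid\Fcal_t]\le C(1+|X_t^x|)$ for all $s\ge t$, uniformly in $T$. Integrating the exponential then produces $|Y_t^{\alpha,x,T}|\le\frac{C}{\alpha}(1+|X_t^x|)$ with $C$ depending only on $M_\psi,\eta_1,\eta_2,r_1,r_2,K_z,\|\sigma^{-1}\|_\infty$. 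I expect this uniform-in-$T$ a priori estimate to be the main obstacle: one has to control conditional moments of $X^x$ under the $T$-dependent measure $\IPhat$ through the Markov property while keeping the constant $C$ free of $T$ and $x$.

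Granting this, for $T'>T$ the difference $\delta Y:=Y^{\alpha,x,T'}-Y^{\alpha,x,T}$ solves on $[0,T]$ a linear BSDE with bounded coefficients and terminal value $Y_T^{\alpha,x,T'}$; the same computation gives $|\delta Y_t|\le\erm^{-\alpha(T-t)}\IEhat[|Y_T^{\alpha,x,T'}|\mid\Fcal_t]\le\frac{C'}{\alpha}\erm^{-\alpha(T-t)}(1+|X_t^x|)\to0$ as $T\to\infty$. Hence $(Y^{\alpha,x,T})_T$ is Cauchy, locally uniformly on $[0,\infty)$ in $\Lrm^1$, and by the standard energy estimate for $(\delta Y,\delta Z)$ on each $[0,T]$ so is $(Z^{\alpha,x,T})_T$ in $\Lrm^2_\Pcal(\Omega,\Lrm^2(0,T))$. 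The limit $(Y^{\alpha,x},Z^{\alpha,x})$ solves (\ref{eq:BSDE}), is continuous, bounded in $\Lrm^1$, lies in the announced space, and inherits $|Y_t^{\alpha,x}|\le\frac{C}{\alpha}(1+|X_t^x|)$. Passing to the limit in $Y_t^{\alpha,x,T}=v^{\alpha,T-t}(X_t^x)$, and observing that the $\delta Y_0$ bound reads $|v^{\alpha,T'}(x)-v^{\alpha,T}(x)|\le\frac{C'}{\alpha}\erm^{-\alpha T}(1+|x|)$ (so $v^{\alpha,T}\to v^\alpha$ uniformly on compact sets), shows that $v^\alpha$ is continuous and that $Y_t^{\alpha,x}=v^\alpha(X_t^x)$, $\IP$-a.s., for every $t\ge0$.

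For uniqueness, let $(Y,Z)$ be another solution with $Y$ continuous and bounded in $\Lrm^1$. Running the linearisation/Girsanov computation on $[0,T]$ without forcing the terminal value to zero gives $Y_t=\erm^{-\alpha(T-t)}\IEhat[Y_T\mid\Fcal_t]+\IEhat\!\left[\int_t^T\erm^{-\alpha(s-t)}\psi(X_s^x,0)\,\drm s\,\middle|\,\Fcal_t\right]$; a Cauchy--Schwarz argument transferring from the $T$-dependent $\IPhat$ back to $\IP$ and using the polynomial moments of $X^x$ (Proposition~\ref{majo_X_Sp}) shows $\erm^{-\alpha(T-t)}\IEhat[Y_T\mid\Fcal_t]\to0$ as $T\to\infty$, so $Y$ coincides with the representation obtained above for $Y^{\alpha,x}$, and then $Z=Z^{\alpha,x}$ by identification of the martingale parts. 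Since every constant above tracks only the listed quantities, the stated dependence of $C$ is automatic.
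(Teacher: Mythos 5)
Your construction of the solution follows essentially the same route as the paper: the paper compresses this step into a citation of Theorem 2.1 of Royer (with the remark that $\psi(\bullet,0)$ is unbounded and that the constant is obtained by redoing the computation of Proposition \ref{prop:sup_fini}), and what you write out — finite-horizon approximations with zero terminal data, linearisation in $z$, Girsanov with a drift bounded by $K_z\|\sigma^{-1}\|_\infty$, the conditional version of Proposition \ref{prop:sup_fini} to get the $T$-uniform bound $\frac{C}{\alpha}(1+|X_t^x|)$, the exponential estimate on $Y^{\alpha,x,T'}-Y^{\alpha,x,T}$, and the passage to the limit giving continuity of $v^\alpha$ and $Y_t^{\alpha,x}=v^\alpha(X_t^x)$ — is exactly that argument spelled out. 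This part is correct and matches the intended proof.

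The genuine gap is in your uniqueness step. You take an arbitrary competitor $(Y,Z)$ with $Y$ merely bounded in $\Lrm^1(\IP)$, linearise, and claim that $\erm^{-\alpha(T-t)}\IEhat\left[Y_T\mid\Fcal_t\right]\to 0$ ``by Cauchy--Schwarz transferring from $\IPhat$ back to $\IP$ and the polynomial moments of $X^x$''. This does not close. Cauchy--Schwarz (or any Hölder transfer through the Girsanov density) requires a moment of $Y_T$ of order strictly greater than $1$ under $\IP$, which is not part of the uniqueness class; and the moments of $X^x$ from Proposition \ref{majo_X_Sp} are irrelevant here, because a general competitor $Y_T$ is not assumed to be dominated by $C(1+|X_T^x|^p)$ — that pointwise domination is precisely what makes the analogous argument work in Theorem \ref{thm:lambda_uniq}, and what makes it work for the solution you constructed, but it is not available for the competitor. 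Moreover, even granting an $\Lrm^2(\IP)$ bound on $Y_T$ uniform in $T$, the second moment of the density $\exp\left(\int_0^T\gamma_s\,\drm W_s-\frac12\int_0^T|\gamma_s|^2\,\drm s\right)$ grows like $\erm^{cT}$ with $c$ of order $K_z^2\|\sigma^{-1}\|_\infty^2$, which has no reason to be smaller than $\alpha$, so $\erm^{-\alpha T}$ times that transfer need not vanish. As written, your argument therefore proves uniqueness only in the smaller class of solutions dominated pathwise by $C(1+|X_t^x|^p)$ (where the conditional estimate of Proposition \ref{prop:sup_fini} under the tilted measure applies), not in the class ``$Y$ continuous and bounded in $\Lrm^1$'' stated in the lemma; a different mechanism (the one behind Royer's Theorem 2.1, which the paper invokes) is needed to cover that class.
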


\begin{proof}~\\
For the upperbound for $Y^{\alpha,x}$, see Theorem 2.1 of \cite{Royer04}. The main difference is that we do not require $\psi(\bullet,0)$ to be bounded. Uniqueness only needs boundedness in $\Lrm^1$ and not almost surely boundedness. Also, in the proof of existence, the constant $C$ is harder to write, because we do the same as in the end of the proof of proposition \ref{prop:sup_fini}. The continuity of $v^\alpha$ is a consequence of a straightforward adaptation of the Theorem 2.1 of \cite{Royer04} when $\psi(\bullet,0)$ is no more assumed to be bounded and proposition 2.1 of \cite{EPQ97}. The representation of $Y^{\alpha,x}$ by $X^x$ comes from the Lemma 4.4 of \cite{Royer04}.
\end{proof}

\begin{lem}
Under Assumption \ref{ass_EBSDE}, for every $\alpha \in (0,1]$, we have:
\begin{equation}
\forall x,x' \in \IR^d,\, \left|v^\alpha(x) - v^\alpha(x') \right| \leq C \left(1+|x| + |x'| \right) |x-x'|,
\label{eq:valpha_lipschitz}
\end{equation}
where $C$ only depends on $K_x$, $K_z$, $M_\psi$, $\alpha$, $\eta_1$, $\eta_2$, $r_1$, $r_2$, $\left\|\sigma^{-1}\right\|_\infty$, $\xi_1$, $\|\Xi\|_\lip$ and $\|\sigma\|_\lip$.
\label{lem:valpha_lipschitz}
\end{lem}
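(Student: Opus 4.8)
The plan is to fix $x, x' \in \IR^d$, compare the two SDEs started from $x$ and $x'$, and then compare the two BSDEs. Write $\Delta X_t = X_t^x - X_t^{x'}$. Since $\xi$ is dissipative and $b$ is bounded with $\|\Xi\|_\lip < \infty$, and $\sigma$ is Lipschitz, an Itô formula applied to $|\Delta X_t|^p$ (for a suitable even $p$, or $p=2$ after a Girsanov change as in Proposition \ref{prop:sup_fini}) gives, using $\langle \xi(X^x)-\xi(X^{x'}),\Delta X\rangle \le 0$ and $|\sigma(X^x)-\sigma(X^{x'})|_F^2 \le \|\sigma\|_\lip^2 |\Delta X|^2$, an estimate of the form $\IE\bigl[|\Delta X_t|^2\bigr] \le |x-x'|^2 e^{ct}$ with $c$ depending on $\|\Xi\|_\lip$ and $\|\sigma\|_\lip$; more importantly, under a change of measure with bounded density (the one making $\Wtilde$ a Brownian motion) and using the smallness condition $\sqrt{r_2}K_z\|\sigma^{-1}\|_\infty + r_2/2 < \eta_2$, one gets a bound $\IEtilde\bigl[|\Delta X_t|^2\bigr] \le C(1+|x|^2+|x'|^2)|x-x'|^2 e^{-\nu t}$ decaying in $t$, or at least controlled uniformly in $t$ by $C(1+|x|^2+|x'|^2)|x-x'|^2$ — this is the analogue of the coupling estimate of Theorem \ref{thm:coupling_estimate} at the level of pathwise differences. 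The reason a decaying factor (or at worst a factor not growing too fast) is needed is that the $\alpha$-discounted BSDE only contributes a factor $e^{-\alpha(s-t)}$, which for $\alpha \in (0,1]$ is not enough on its own; the dissipativity of the forward process must do the work.

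Next I would write down the BSDE satisfied by the difference $\Delta Y_t = Y_t^{\alpha,x} - Y_t^{\alpha,x'}$, $\Delta Z_t = Z_t^{\alpha,x} - Z_t^{\alpha,x'}$:
\[
\Delta Y_t = \Delta Y_T + \int_t^T \bigl[\psi(X_s^x,Z_s^{\alpha,x}) - \psi(X_s^{x'},Z_s^{\alpha,x'}) - \alpha \Delta Y_s\bigr]\,\drm s - \int_t^T \Delta Z_s\,\drm W_s.
\]
Using the twisted Lipschitz assumption
$|\psi(x,z)-\psi(x',z')| \le K_x|x-x'| + K_z|z\sigma(x)^{-1} - z'\sigma(x')^{-1}|$,
I linearise: there is a bounded $(\IR^d)^*$-valued adapted process $\beta$ (with $|\beta| \le K_z\|\sigma^{-1}\|_\infty$, after absorbing the $\sigma^{-1}$ factors, since $\psi$ is genuinely Lipschitz in $z$ with that constant — note that the extra $z'$-term $Z^{\alpha,x'}(\sigma(X^x)^{-1}-\sigma(X^{x'})^{-1})$ produced by the twisted assumption contributes an inhomogeneous term bounded by $K_z\|\sigma^{-1}\|^2_\infty\|\sigma\|_\lip |Z^{\alpha,x'}||\Delta X|$, which must be handled separately — this is exactly the "twisted Lipschitz" complication the authors flag as technical and defer to the appendix). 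Then, changing to the measure $\IPtilde$ under which $\Wtilde_s = W_s - \int_0^s \beta_u\,\drm u$ is a Brownian motion (here $\beta$ bounded is what licenses Girsanov and Proposition \ref{prop:sup_fini}), the BSDE becomes
\[
\Delta Y_t = e^{-\alpha(T-t)}\Delta Y_T + \int_t^T e^{-\alpha(s-t)}\bigl[\,(\text{drift term in }\Delta X_s) + (\text{inhomogeneous }Z^{\alpha,x'}\text{-term})\,\bigr]\,\drm s - \int_t^T e^{-\alpha(s-t)}\Delta Z_s\,\drm\Wtilde_s,
\]
so that, taking $\IEtilde[\cdot\,|\Fcal_t]$ and letting $T\to\infty$,
\[
|\Delta Y_t| \le \limsup_{T\to\infty} e^{-\alpha(T-t)}\,\IEtilde\bigl[|\Delta Y_T|\,\big|\,\Fcal_t\bigr] + K_x\int_t^\infty e^{-\alpha(s-t)}\IEtilde\bigl[|\Delta X_s|\,\big|\,\Fcal_t\bigr]\,\drm s + (\text{term with }Z^{\alpha,x'}).
\]
The first term vanishes because $|\Delta Y_T| \le |Y_T^{\alpha,x}| + |Y_T^{\alpha,x'}| \le \frac{C}{\alpha}(2+|X_T^x|+|X_T^{x'}|)$ by Lemma \ref{lem:EU_BSDE}, and $\IEtilde[|X_T^\cdot|]$ has at most polynomial growth while $e^{-\alpha(T-t)}\to 0$.

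For the remaining two integrals I would evaluate at $t=0$ (which is all the statement needs, since $v^\alpha(x)=Y_0^{\alpha,x}$) and insert the forward estimate from the first paragraph: $\IEtilde[|\Delta X_s|] \le C(1+|x|+|x'|)|x-x'|\, e^{-\nu s/2}$ (or the uniform-in-$s$ bound), so $K_x\int_0^\infty e^{-\alpha s}\IEtilde[|\Delta X_s|]\,\drm s \le \frac{C K_x}{\alpha}(1+|x|+|x'|)|x-x'|$ — this already has the claimed form. For the $Z^{\alpha,x'}$-term, by Cauchy–Schwarz under $\IPtilde$, $\int_0^\infty e^{-\alpha s}\IEtilde\bigl[|Z_s^{\alpha,x'}||\Delta X_s|\bigr]\,\drm s \le \bigl(\int_0^\infty e^{-\alpha s}\IEtilde[|Z_s^{\alpha,x'}|^2]^{1/2}\cdot\IEtilde[|\Delta X_s|^2]^{1/2}\,\drm s\bigr)$; the first factor is controlled by the a priori $\Lrm^2_\loc$ estimates on $Z^{\alpha,x'}$ coming from the boundedness of $Y^{\alpha,x'}$ in $\Lrm^1$ and the linear-growth bound of Lemma \ref{lem:EU_BSDE} (a standard BSDE energy estimate, giving something like $C(1+|x'|^2)$ after discounting), and the second by the decaying forward estimate, again reproducing the factor $(1+|x|+|x'|)|x-x'|$ after the quadratic growth of the $Z$-energy is absorbed into $(1+|x|+|x'|)$. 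Collecting the three contributions gives $|v^\alpha(x)-v^\alpha(x')| \le C(1+|x|+|x'|)|x-x'|$ with $C$ of the advertised dependence.

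\textbf{Main obstacle.} The genuinely delicate point is the inhomogeneous term generated by the twisted Lipschitz assumption, namely controlling $\int_0^\infty e^{-\alpha s}\IEtilde[|Z_s^{\alpha,x'}||\Delta X_s|]\,\drm s$: one needs a quantitative $\Lrm^2$-in-time bound on $Z^{\alpha,x'}$ with explicit (at most quadratic) growth in $|x'|$, and one needs the forward coupling estimate on $\Delta X$ to decay fast enough that the product is integrable in $s$ and the quadratic $Z$-growth does not spoil the final $(1+|x|+|x'|)$ prefactor — this is precisely why the smallness condition $\sqrt{r_2}K_z\|\sigma^{-1}\|_\infty + r_2/2 < \eta_2$ appears, and why (as the authors say) this bookkeeping is relegated to the appendix. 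Everything else is a routine combination of Girsanov, a linearised comparison of BSDEs, and Gronwall/Cauchy–Schwarz.
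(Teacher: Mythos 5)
Your plan hinges on a pathwise contraction for the forward difference: you claim $\IEtilde\left[\left|X_s^x-X_s^{x'}\right|^2\right]\leq C\left(1+|x|^2+|x'|^2\right)|x-x'|^2\erm^{-\nu s}$ (or at least a bound uniform in $s$), obtained from ``$\langle \xi(X^x)-\xi(X^{x'}),\Delta X\rangle\leq 0$''. This is a genuine gap: under Assumption \ref{ass_EBSDE} the dissipativity of $\xi$ is only radial, $\langle\xi(x),x\rangle\leq-\eta|x|^2$ with $\xi$ merely locally Lipschitz; it does \emph{not} give $\langle\xi(x)-\xi(x'),x-x'\rangle\leq 0$ (in dimension one, $\xi(x)=-x+\tfrac12 x\cos(x^2)$ is radially dissipative but its increments expand arbitrarily fast). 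Nor is such a pathwise estimate ``the analogue of Theorem \ref{thm:coupling_estimate}'': that theorem is a Harris-type estimate on the Kolmogorov semigroup, obtained from irreducibility plus a Lyapunov function, and it controls $\left|\Pcal_t[\phi](x)-\Pcal_t[\phi](y)\right|$, not $\IE\left[\left|X_t^x-X_t^y\right|\right]$; the smallness condition $\sqrt{r_2}K_z\left\|\sigma^{-1}\right\|_\infty+\tfrac{r_2}{2}<\eta_2$ only feeds Proposition \ref{prop:sup_fini} (moments of $X$ under the Girsanov measure), again with no pathwise content. Without decay, the only available bound on the difference is of Gronwall type, $\IEtilde\left[\left|\Delta X_s\right|\right]\leq |x-x'|\,\erm^{cs}$ with $c$ determined by $\|\Xi\|_\lip$, $\|\sigma\|_\lip$ and the Girsanov drift, hence independent of $\alpha$; then $\int_0^\infty\erm^{-\alpha s}\IEtilde\left[\left|\Delta X_s\right|\right]\drm s$ and, a fortiori, the cross term $\int_0^\infty\erm^{-\alpha s}\IEtilde\left[\left|Z_s^{\alpha,x'}\right|\left|\Delta X_s\right|\right]\drm s$ diverge for all small $\alpha\in(0,1]$, and the argument collapses exactly where the statement must hold. (Even granting the decay, your last step would naively produce a prefactor $\left(1+|x'|\right)\left(1+|x|+|x'|\right)$ from the $Z$-energy times the difference estimate, and ``absorbing'' it into $1+|x|+|x'|$ is not justified.)

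This is also why the paper does something structurally different. Since $Y_t^{\alpha,x}=v^\alpha\left(X_t^x\right)$ (Lemma \ref{lem:EU_BSDE}), $v^\alpha(x)=Y_0^{\alpha,x}$ solves a \emph{finite-horizon} BSDE with driver $f(x,y,z)=\psi(x,z)-\alpha y$ and terminal condition $g=v^\alpha$, and the appendix (Theorem \ref{thm:u_Lipschitz}) proves, via smooth approximation, the Ma--Zhang representation $Z_s=\partial_x u\left(s,X_s\right)\sigma\left(X_s\right)$ and a submartingale argument for a transformed gradient process, the pointwise bound $\left|\partial_x u(t,x)\right|\leq\Ccal\left(1+|x|^\mu+\IE\left[\left|g\left(X_T^{t,x}\right)\right|^4\right]^{1/4}\right)$ on a fixed finite interval; plugging in the linear growth $\left|v^\alpha(x)\right|\leq\frac{C}{\alpha}\left(1+\left|X_t^x\right|\right)$ of Lemma \ref{lem:EU_BSDE} and the mean value theorem yields (\ref{eq:valpha_lipschitz}), with no infinite-horizon integral and no pathwise coupling. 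So the appendix is not ``bookkeeping of the inhomogeneous term'' in your linearisation; it is the actual proof of the lemma, and the ingredient your proposal is missing.
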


\begin{proof}~\\
See Theorem \ref{thm:u_Lipschitz}, given in the appendix, with $f:(x,y,z) \mapsto \psi(x,z) - \alpha y$ and $g = v^\alpha$. It uses the estimate of Lemma \ref{lem:EU_BSDE}.
\end{proof}

\begin{rmq}
\begin{enumerate}
\item If $\sigma$ and $\psi(\bullet,0)$ are bounded and $\Xi$ and $\sigma$ are of class $\Ccal^1$, then, the Theorem 3.2 of \cite{HT07} tells us that $v^\alpha$ is of class $\Ccal^1$. Then, we can apply the Theorem 3.1 of \cite{MZ02} and write $Z_t^{\alpha,x} = \partial_x v^\alpha\left(X_t^x\right) \sigma\left(X_t^x\right)$.
\item The Lipschitz constant given by the last lemma goes to infinity as $\alpha$ appoximates $0$.
\end{enumerate}
\end{rmq}

\begin{lem}
Let $\psi : \IR^d \times \left(\IR^d\right)^* \rightarrow \IR$ continuous w.r.t. the first variable and Lipschitz continuous w.r.t. the second one.
Let $\zeta, \zeta' : \IR_+ \times \IR^d \rightarrow \left(\IR^d \right)^*$ be such that for every $t \geq 0$, $\zeta(t,\bullet)$ and $\zeta'(t,\bullet)$ are continuous, and set:
\begin{equation*}
\widetilde{\Gamma}(t,x) = \left\{ \begin{array}{ll}
  \dfrac{\psi(x,\zeta(t,x)) - \psi(x,\zeta'(t,x))}{|\zeta(t,x)-\zeta'(t,x)|^2} (\zeta(t,x)-\zeta'(t,x)), & \text{if } \zeta(t,x) \neq \zeta'(t,x), \\
  0, & \text{if } \zeta(t,x) = \zeta'(t,x).
\end{array} \right.
\end{equation*}
Then, there exists a uniformly bounded sequence of $\Ccal^1$ functions w.r.t. $x$ with bounded derivatives $\left(\widetilde{\Gamma}_n\right)_{n \geq 1}$ (\emph{i.e.} for all $n$, $\widetilde{\Gamma}_n$ has bounded derivatives w.r.t. $x$ -- the bound of derivatives can depend on $n$ -- and $\sup_{n \geq 1} \left\|\widetilde{\Gamma}_n(t,\bullet)\right\|_\infty < \infty$ for every $t \geq 0$), such that $\widetilde{\Gamma}_n \underset{n \rightarrow \infty}{\longrightarrow} \widetilde{\Gamma}$ pointwise.
\label{lem:approx_lipschitz}
\end{lem}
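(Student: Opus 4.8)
The plan is to obtain $\widetilde{\Gamma}_n$ as mollifications, in the space variable only, of a suitable \emph{continuous} approximation of $\widetilde{\Gamma}$, followed by a diagonal argument in the mollification parameter and a truncation. The whole difficulty is that $\widetilde{\Gamma}(t,\cdot)$ is in general not continuous: it may jump on the closed set $\{x \in \IR^d : \zeta(t,x) = \zeta'(t,x)\}$, so mollifying $\widetilde{\Gamma}$ directly would only yield convergence at its continuity points, not everywhere. Write $L$ for the Lipschitz constant of $\psi$ in its second variable. First, $\widetilde{\Gamma}$ is bounded: if $\zeta(t,x) \neq \zeta'(t,x)$ then $|\widetilde{\Gamma}(t,x)| = |\psi(x,\zeta(t,x)) - \psi(x,\zeta'(t,x))| / |\zeta(t,x) - \zeta'(t,x)| \leq L$, and $\widetilde{\Gamma}(t,x) = 0$ otherwise, so $|\widetilde{\Gamma}| \leq L$ on $\IR_+ \times \IR^d$.

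The key step is to exhibit each slice $\widetilde{\Gamma}(t,\cdot)$ as a pointwise limit of continuous functions bounded by $L$. Fix $t \geq 0$ and set, for $x \in \IR^d$,
\[ N(t,x) = \bigl(\psi(x,\zeta(t,x)) - \psi(x,\zeta'(t,x))\bigr)\bigl(\zeta(t,x) - \zeta'(t,x)\bigr), \qquad D(t,x) = |\zeta(t,x) - \zeta'(t,x)|^2 . \]
Since $\psi$ is jointly continuous (being continuous in $x$ and Lipschitz in $z$), the maps $x \mapsto \psi(x,\zeta(t,x))$ and $x \mapsto \psi(x,\zeta'(t,x))$ are continuous, so $N(t,\cdot)$ and $D(t,\cdot)$ are continuous, $D(t,\cdot) \geq 0$, $|N(t,\cdot)| \leq L\, D(t,\cdot)$ pointwise, and $N(t,x) = 0$ whenever $D(t,x) = 0$. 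Put $G_k(t,x) = N(t,x)/(D(t,x) + 1/k)$ for $k \geq 1$. Then $G_k(t,\cdot)$ is continuous, $|G_k| \leq L$ (as $|N| \leq L D \leq L(D+1/k)$), and for every $x$, $G_k(t,x) \limit{k}{\infty} \widetilde{\Gamma}(t,x)$: directly where $D(t,x) > 0$, and trivially where $D(t,x) = 0$, since there $N(t,x) = 0 = \widetilde{\Gamma}(t,x)$. The term $1/k$ is precisely the device that pins the correct value $0$ on the jump set of $\widetilde{\Gamma}$ while keeping continuity.

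Now fix a standard mollifying sequence $(\varphi_m)_{m \geq 1}$ on $\IR^d$ (smooth, nonnegative, of integral $1$, supported in $\overline{\Bcal}(0,1/m)$) and smooth cut-offs $\chi_n : \IR^d \to [0,1]$ with $\chi_n \equiv 1$ on $\overline{\Bcal}(0,n)$ and $\chi_n \equiv 0$ off $\overline{\Bcal}(0,n+1)$. For $n \geq 1$ and $t \geq 0$, $G_n(t,\cdot)$ is uniformly continuous on the compact $\overline{\Bcal}(0,n+1)$, hence $G_n(t,\cdot) * \varphi_m \to G_n(t,\cdot)$ uniformly on $\overline{\Bcal}(0,n)$ as $m \to \infty$; pick $m(n,t)$ large enough that $\sup_{|x| \leq n} |(G_n(t,\cdot) * \varphi_{m(n,t)})(x) - G_n(t,x)| \leq 1/n$, and set
\[ \widetilde{\Gamma}_n(t,x) := \chi_n(x)\,\bigl(G_n(t,\cdot) * \varphi_{m(n,t)}\bigr)(x) . \]
Each $\widetilde{\Gamma}_n(t,\cdot)$ is $\Ccal^\infty$ and supported in $\overline{\Bcal}(0,n+1)$, hence has bounded first derivatives in $x$ (with a bound depending on $n$ and $t$, controlled by $\|\nabla\chi_n\|_\infty$, $L$ and $\|\nabla\varphi_{m(n,t)}\|_{\Lrm^1}$); moreover $|\widetilde{\Gamma}_n(t,x)| \leq \|G_n(t,\cdot)\|_\infty \leq L$, so $\sup_{n \geq 1} \|\widetilde{\Gamma}_n(t,\bullet)\|_\infty \leq L$ for every $t$. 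Finally, for fixed $(t,x)$ and every $n \geq |x|$ one has $\chi_n(x) = 1$ and $|x| \leq n$, whence
\[ |\widetilde{\Gamma}_n(t,x) - \widetilde{\Gamma}(t,x)| \leq \bigl|(G_n(t,\cdot)*\varphi_{m(n,t)})(x) - G_n(t,x)\bigr| + \bigl|G_n(t,x) - \widetilde{\Gamma}(t,x)\bigr| \leq \tfrac{1}{n} + \bigl|G_n(t,x) - \widetilde{\Gamma}(t,x)\bigr| , \]
which tends to $0$ by the second paragraph; thus $\widetilde{\Gamma}_n \to \widetilde{\Gamma}$ pointwise.

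The only real obstacle is the second step: one must realise that $\widetilde{\Gamma}(t,\cdot)$ is a genuine pointwise (not merely almost-everywhere) limit of continuous functions, the family $G_k = N/(D + 1/k)$ being what makes this true. The rest is standard: a diagonal choice of the mollification scale --- done slice by slice in $t$, since the statement requires no regularity of $\widetilde{\Gamma}_n$ in $t$ --- and a cut-off to make the $x$-derivatives bounded, the bound being allowed to depend on $n$ (and, here, on $t$ as well).
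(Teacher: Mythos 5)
Your proof is correct for the statement read slice-by-slice in $t$, and it is genuinely more self-contained than the paper's: the paper's own proof is just a citation of Lemma 3.7 of \cite{PDE} ("approximate the Lipschitz functions by $\Ccal^1$ functions with bounded derivatives and construct a new sequence having the required regularity"), i.e.\ it regularises the Lipschitz data and rebuilds the quotient, whereas you leave $\psi,\zeta,\zeta'$ untouched and regularise the quotient itself. Your key device --- writing $\widetilde{\Gamma}=N/D$ with $N,D$ continuous, $|N|\leq L\,D$, and shifting the denominator to $D+1/k$ so that the value $0$ on $\{\zeta=\zeta'\}$ is pinned while continuity and the bound $L$ are preserved --- is exactly the right observation, and the subsequent mollification, cut-off and diagonal choice of scales are carried out correctly (the bounds $|G_k|\leq L$, the uniform convergence of the mollification on balls, and the survival of the bound $L$ after the cut-off are all fine).

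The one point you should not dismiss is the remark "the statement requires no regularity of $\widetilde{\Gamma}_n$ in $t$". Because you choose the mollification scale $m(n,t)$ separately for each $t$, the bound on $\partial_x\widetilde{\Gamma}_n$ depends on $t$ as well as on $n$, and $\widetilde{\Gamma}_n$ need not even be measurable in $t$. The statement is most naturally read as asking for a derivative bound depending on $n$ only (the per-$t$ quantifier is written explicitly for the sup bound, not for the derivatives), and, more importantly, that is what the paper actually uses: the approximants produced by this lemma are fed into Assumption \ref{ass_SDE_nonLip} and Corollary \ref{cor:semigroup} as the time-dependent part of an SDE drift, which must be Lipschitz in $x$ uniformly in $t$ (and jointly measurable) for the approximating SDEs and the Girsanov densities to make sense. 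Under the bare hypotheses of the lemma (continuity of $\zeta(t,\bullet),\zeta'(t,\bullet)$ for each fixed $t$, with no joint regularity) your slice-wise choice is unavoidable; in the paper's applications, however, $\zeta$ and $\zeta'$ are jointly continuous on $[0,T]\times\IR^d$ (they come from Theorem 3.1 of \cite{MZ02}) and are extended constantly in time after $T$, so your construction goes through with a single scale $m(n)$, using uniform continuity of $G_n$ on $[0,T]\times\overline{\Bcal}(0,n+1)$; this restores a derivative bound depending only on $n$ together with continuity in $t$. Stating this explicitly would close the gap between what you prove and what the paper needs.
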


\begin{proof}~\\
See the Lemma 3.7 of \cite{PDE}: we can approximate the Lipschitz functions by $\Ccal^1$ functions with bounded derivatives and construct a new sequence having the required regularity.
\end{proof}

\begin{prop}
Under Assumption \ref{ass_EBSDE}, there exists a constant $C$, such that for every $\alpha \in (0,1]$, we have:
\[ \forall x,x' \in \IR^d,\, \left|v^\alpha(x) - v^\alpha(x')\right| \leq C \left(1+|x|^2+|x'|^2\right).\]
The constant $C$ only depends on $\eta_1$, $\eta_2$, $r_1$, $r_2$, $K_z$, $\left\|\sigma^{-1}\right\|_\infty$ and $M_\psi$.
\label{prop:valpha_quadratic}
\end{prop}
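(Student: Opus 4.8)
~\\
The plan is to kill the $z$--nonlinearity of $\psi$ by a Girsanov transformation which, thanks to the twisted Lipschitz bound of Assumption~\ref{ass_EBSDE}, only adds a \emph{bounded} perturbation to the drift of $X^x$, and then to run the coupling estimate of Theorem~\ref{thm:coupling_estimate}. Fix $\alpha\in(0,1]$ and assume first that the data are regular enough for $v^\alpha$ to be of class $\Ccal^1$ (see the remark after Lemma~\ref{lem:valpha_lipschitz}), so that $Z^{\alpha,x}_t=\nabla v^\alpha(X^x_t)\sigma(X^x_t)$. Linearising $z\mapsto\psi(x,z)$ between $0$ and $\nabla v^\alpha(x)\sigma(x)$ by means of the twisted Lipschitz bound with frozen first argument, one gets a measurable $\beta^\alpha:\IR^d\to\IR^d$ with $|\beta^\alpha|\leq K_z$ such that $\psi(x,\nabla v^\alpha(x)\sigma(x))-\psi(x,0)=\nabla v^\alpha(x)\cdot\beta^\alpha(x)$. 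Put $\Theta^\alpha:=\sigma^{-1}\beta^\alpha$, so that $\|\Theta^\alpha\|_\infty\leq K_z\|\sigma^{-1}\|_\infty$ and $\sigma\,\Theta^\alpha=\beta^\alpha$ is bounded --- this is exactly the point of the twisted assumption. Then $Z^{\alpha,x}_s\,\Theta^\alpha(X^x_s)=\psi(X^x_s,Z^{\alpha,x}_s)-\psi(X^x_s,0)$, and, absorbing this term into the Brownian motion, $\Wtilde^\alpha_t:=W_t-\int_0^t\Theta^\alpha(X^x_s)\,\drm s$ is, on each $[0,T]$, a Brownian motion under a probability $\IPtilde^{\alpha,x}$ under which
\[\drm X^x_t=\bigl[\Xi(X^x_t)+\sigma(X^x_t)\Theta^\alpha(X^x_t)\bigr]\,\drm t+\sigma(X^x_t)\,\drm\Wtilde^\alpha_t.\]

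Next I would apply Itô's formula to $\erm^{-\alpha t}Y^{\alpha,x}_t$, take $\IEtilde^{\alpha,x}$, and let $T\to\infty$: the stochastic integral is a true martingale (localisation, using $|Y^{\alpha,x}_t|\leq\tfrac{C}{\alpha}(1+|X^x_t|)$ from Lemma~\ref{lem:EU_BSDE}), and the boundary term $\IEtilde^{\alpha,x}[\erm^{-\alpha T}Y^{\alpha,x}_T]$ vanishes because $\sup_{T\geq0}\IEtilde^{\alpha,x}[|X^x_T|]\leq C(1+|x|)$ by Proposition~\ref{prop:sup_fini} --- whose hypothesis $\sqrt{r_2}K_z\|\sigma^{-1}\|_\infty+\tfrac{r_2}{2}<\eta_2$ is precisely the last line of Assumption~\ref{ass_EBSDE}. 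This leaves
\[v^\alpha(x)=\int_0^\infty\erm^{-\alpha s}\,\Pcal^\alpha_s[\psi(\cdot,0)](x)\,\drm s,\]
where $\Pcal^\alpha$ is the Kolmogorov semigroup of the diffusion with drift $\Xi+\sigma\Theta^\alpha$ and diffusion coefficient $\sigma$. The crucial observation is that $\Theta^\alpha$ does not depend on the starting point, so this \emph{same} semigroup describes both $X^x$ under $\IPtilde^{\alpha,x}$ and $X^{x'}$ under $\IPtilde^{\alpha,x'}$; hence
\[v^\alpha(x)-v^\alpha(x')=\int_0^\infty\erm^{-\alpha s}\bigl(\Pcal^\alpha_s[\psi(\cdot,0)](x)-\Pcal^\alpha_s[\psi(\cdot,0)](x')\bigr)\,\drm s.\]

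Then I would apply the coupling estimate to $\Pcal^\alpha$. The drift $\Xi+\sigma\Theta^\alpha$ has the structure ``Lipschitz dissipative part $\Xi$ (with $\langle\Xi(x),x\rangle\leq\eta_1-\eta_2|x|^2$) plus $\sigma(\cdot)\Theta^\alpha(\cdot)$'', with $\Theta^\alpha$ bounded by $K_z\|\sigma^{-1}\|_\infty$ and --- by Lemma~\ref{lem:approx_lipschitz}, applied with $\zeta=\nabla v^\alpha\cdot\sigma$ and $\zeta'=0$ --- a pointwise limit of $\Ccal^1$ functions with bounded derivatives; hypoellipticity holds since $\sigma^{-1}$ is bounded, and $X^x$ stays irreducible under $\IPtilde^{\alpha,x}$ because this measure is equivalent to $\IP$ on each $\Fcal_T$. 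Thus the argument of Corollary~\ref{cor:semigroup} applies to $\Pcal^\alpha$ with $\mu=2$ --- the smallness condition reducing exactly to $\sqrt{r_2}K_z\|\sigma^{-1}\|_\infty+\tfrac{r_2}{2}<\eta_2$ --- and, since $|\psi(x,0)|\leq M_\psi(1+|x|)\leq 2M_\psi(1+|x|^2)$, it gives
\[\bigl|\Pcal^\alpha_s[\psi(\cdot,0)](x)-\Pcal^\alpha_s[\psi(\cdot,0)](x')\bigr|\leq 2\,\chat\,M_\psi\,(1+|x|^2+|x'|^2)\,\erm^{-\nuhat s},\]
with $\chat,\nuhat>0$ depending only on $\eta_1,\eta_2,r_1,r_2,K_z,\|\sigma^{-1}\|_\infty$ (and not on $\alpha$, since only the uniform bound $\|\Theta^\alpha\|_\infty\leq K_z\|\sigma^{-1}\|_\infty$ enters). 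Integrating, using $\erm^{-\alpha s}\leq1$, yields $|v^\alpha(x)-v^\alpha(x')|\leq\frac{2\chat M_\psi}{\nuhat}(1+|x|^2+|x'|^2)$, which is the claim with $C=2\chat M_\psi/\nuhat$.

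Finally, for general data I would drop the extra regularity by approximation: replace $\psi,\Xi,\sigma$ by smooth (and, for $\sigma$ and $\psi(\cdot,0)$, bounded) data with the constants $\eta_1,\eta_2,r_1,r_2,K_z,\|\sigma^{-1}\|_\infty,M_\psi$ --- and the smallness condition --- preserved up to a vanishing error, exactly as in the proof of Corollary~\ref{cor:semigroup}; the associated $v^{\alpha,n}$ converge pointwise to $v^\alpha$ by stability of the BSDE (\ref{eq:BSDE}), so the bound of the previous step passes to the limit. I expect this last reduction, together with the need for $X^x$ to be a genuine time--homogeneous Markov process under $\IPtilde^{\alpha,x}$ (which forces the detour through the $\Ccal^1$ case) and the bookkeeping that keeps $C$ free of $K_x$, $\|\Xi\|_\lip$, $\|\sigma\|_\lip$ and of $\alpha$, to be the main obstacle.
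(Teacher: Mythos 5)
Your core argument is essentially the paper's: regularise the data so that $v^\alpha$ is $\Ccal^1$ and $Z^{\alpha,x}_t$ is a continuous Markov function of $X^x_t$, linearise $\psi$ between $Z^{\alpha,x}$ and $0$, change measure by Girsanov, represent $v^\alpha(x)$ as $\int_0^\infty \erm^{-\alpha s}\Pcal^\alpha_s[\psi(\cdot,0)](x)\,\drm s$ after killing the boundary term with Proposition \ref{prop:sup_fini}, and conclude with the coupling estimate of Corollary \ref{cor:semigroup} (via Lemma \ref{lem:approx_lipschitz}) with $\mu=2$. Your only real variation is cosmetic but pleasant: by linearising in the twisted variable $z\sigma(x)^{-1}$ you obtain a drift perturbation $\sigma\Theta^\alpha=\beta^\alpha$ bounded by $K_z$, whereas the paper linearises in $z$ and works with the perturbation $\sigma^\varepsilon\Gamma^{\varepsilon,\alpha*}$, of linear growth controlled by $\sqrt{r_2}K_z\|\sigma^{-1}\|_\infty$; both routes land on the same smallness condition and the same constant $2\chat M_\psi/\nuhat$.

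The one place where you are short of a proof is the step you yourself flag as the main obstacle: passing from the regularised $v^{\alpha,n}$ back to the true $v^\alpha$ "by stability of the BSDE" is not an off-the-shelf fact here, because $\Xi$, $\sigma$ and $\psi$ are all perturbed simultaneously in an infinite-horizon monotone BSDE with unbounded data. This is where the paper spends most of its proof: it first uses the $\varepsilon$-uniform locally Lipschitz bound of Lemma \ref{lem:valpha_lipschitz} and a diagonal/Ascoli argument to extract a pointwise limit $\vbar^\alpha$ with the desired quadratic-increment bound, then proves that $\left(Z^{\varepsilon_n,\alpha,x}\right)_n$ is Cauchy in $\Lrm^2_\Pcal\left(\Omega,\Lrm^2\left([0,T],\left(\IR^d\right)^*\right)\right)$ (using the a priori bounds $\left|Y^{\varepsilon_n,\alpha,x}_t\right|\leq \frac{C}{\alpha}\left(1+\left|X^{\varepsilon_n,x}_t\right|\right)$ and Lemma 2.1 of \cite{MZ02} for $X^{\varepsilon_n,x}\to X^x$), so that the limit pair solves (\ref{eq:BSDE}), and only then identifies $\vbar^\alpha=v^\alpha$ by the uniqueness statement of Lemma \ref{lem:EU_BSDE}, which lets the estimate pass to the limit. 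So your proposal is the right strategy and the right constants, but this convergence-and-identification argument needs to be carried out, not merely invoked.
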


\begin{proof}~
\begin{enumerate}
\item We approximate $\sigma$ by a sequence $\left(\sigma^\varepsilon\right)_{\varepsilon > 0}$ of functions satisfying:
      \begin{itemize}
      \item $\sigma^\varepsilon$ converges pointwise towards $\sigma$ over $\IR^d$;
      \item $\sigma^\varepsilon$ is bounded (the bound can depend on $\varepsilon$) and $\left|\sigma^\varepsilon(x)\right|_F^2 \leq r_1 + r_2|x|^2$;
      \item $\sigma^\varepsilon$ is of class $\Ccal^1$ and $\left\|\sigma^\varepsilon\right\|_\lip \leq \|\sigma\|_\lip$;
      \item $x \mapsto \sigma^\varepsilon(x)^{-1}$ is bounded, the bound is independent of $\varepsilon$.
      \end{itemize}
      We also approximate $\Xi$ by a sequence $\left(\Xi^\varepsilon\right)_{\varepsilon > 0}$ of $\Ccal^1$ functions which converges uniformly. One can check that the functions $\Xi^\varepsilon$ are ``uniformly weakly dissipative'', because the functions $\Xi^\varepsilon - \xi$ are uniformly bounded. Moreover, $\psi$ is approximated by a sequence $\left(\psi^\varepsilon\right)$ of functions satisfying:
      \begin{itemize}
      \item $\left|\psi^\varepsilon(x,z) - \psi^\varepsilon(x',z')\right| \leq K_x |x-x'| + K_z \left|z \sigma^\varepsilon(x)^{-1} - z' \sigma^\varepsilon(x')^{-1}\right|$;
      \item $\psi^\varepsilon(\bullet,0)$ is bounded;
      \item $\left|\psi^\varepsilon(x,0)\right| \leq M_\psi (1+|x|)$;
      \item $\left(\psi^\varepsilon\right)$ converges pointwise towards $\psi$.
      \end{itemize}
      We consider the BSDE:
      \[ Y_t^{\varepsilon,\alpha,x} = Y_T^{\varepsilon,\alpha,x} + \int_t^T \left\{ \psi^\varepsilon\left( X_s^{\varepsilon,x}, Z_s^{\varepsilon,\alpha,x}\right) - \alpha Y_s^{\varepsilon,\alpha,x}\right\} \, \drm s - \int_t^T Z_s^{\varepsilon,\alpha,x} \, \drm W_s ,\]
      where the process $X^{\varepsilon,x}$ satisfies the following equation:
      \[ X_t^{\varepsilon,x} = x + \int_0^t \Xi^\varepsilon\left(X_s^{\varepsilon,x}\right) \, \drm s + \int_0^t \sigma^\varepsilon\left(X_s^{\varepsilon,x}\right) \, \drm W_s.\]
      
      This BSDE has a unique solution (see Lemma \ref{lem:EU_BSDE}), and $\dst \left|Y_t^{\varepsilon,\alpha,x}\right| \leq \frac{C}{\alpha}\left(1+\left|X_t^x\right|\right)$ $\IP$-a.s. and for every $t \geq 0$. Thanks to the previous remark, we can write $Z_t^{\varepsilon,\alpha,x} = \zeta^{\varepsilon,\alpha} \left(X_t^{\varepsilon,x}\right)$ $\IP$-a.s. and for a.e. $t \geq 0$, and $\zeta^{\varepsilon,\alpha}$ is continuous. We define:
      \[ \Gamma^{\varepsilon,\alpha}(x) = \begin{cases} 
        \dfrac{\psi^\varepsilon\left(x,\zeta^{\varepsilon,\alpha}(x)\right) - \psi^\varepsilon(x,0)}{\left|\zeta^{\varepsilon,\alpha}(x)\right|^2} \zeta^{\varepsilon, \alpha}(x), & \text{if } \zeta^{\varepsilon,\alpha}(x) \neq 0, \\ 0, & \text{otherwise.} \end{cases} \]
        
      Then, thanks to Lemma \ref{lem:approx_lipschitz}, we can approximate $\Gamma^{\varepsilon,\alpha}$ in such a way that we can use Corollary \ref{cor:semigroup}.
      
      We can rewrite:
      \[ -\drm Y_t^{\varepsilon,\alpha,x} = \left\{\psi^\varepsilon\left(X_t^{\varepsilon,x},0\right) + Z_t^{\varepsilon,\alpha,x} \Gamma^{\varepsilon,\alpha}\left(X_t^{\varepsilon,x}\right)^*-\alpha Y_t^{\varepsilon,\alpha,x}\right\} \, \drm t - Z_t^{\varepsilon,\alpha,x} \, \drm W_t.\]
      
      But $\Gamma^{\varepsilon,\alpha}$ is bounded by $K_z \left\|\sigma^{-1}\right\|_\infty$, and there exists a probability $\widehat{\IP}^{\varepsilon,\alpha,x,T}$ under which $\dst \widehat{W}^{\varepsilon,\alpha,x}_t = W_t - \int_0^t \Gamma^{\varepsilon,\alpha}\left(X_s^{\varepsilon,x}\right)^* \, \drm s$ is a Brownian motion on $[0,T]$. Finally, we get the equality: $\dst v^{\varepsilon,\alpha}(x) = \widehat{\IE}^{\varepsilon,\alpha,x,T}\left[\erm^{-\alpha T} v^{\varepsilon,\alpha}\left(X_T^{\varepsilon,x}\right) + \int_0^T \erm^{-\alpha s} \psi^\varepsilon\left(X_s^{\varepsilon,x},0\right)\, \drm s\right]$.
      
      On the one hand, using proposition \ref{prop:sup_fini}: $\dst \left|\widehat{\IE}^{\varepsilon,\alpha,x,T}\left[\erm^{-\alpha T} v^{\varepsilon,\alpha}\left(X_T^{\varepsilon,x}\right)\right]\right| \leq \erm^{-\alpha T} \frac{C}{\alpha} \widehat{\IE}^{\varepsilon,\alpha,x,T}\left[\left|X_T^{\varepsilon,x}\right|\right] \underset{T \rightarrow \infty}{\longrightarrow} 0$.
      
      On the other hand, $X^\varepsilon$ satisfies the following SDE under $\widehat{\IP}^{\varepsilon,\alpha,x,T}$:
      \[ \drm X_t^{\varepsilon,x} = \left[\Xi^\varepsilon\left(X_t^{\varepsilon,x}\right) + \sigma^\varepsilon\left(X_t^{\varepsilon,x}\right)\Gamma^{\varepsilon,\alpha}\left(X_t^{\varepsilon,x}\right)^*\right] \, \drm t + \sigma^\varepsilon\left(X_t^{\varepsilon,x}\right) \, \drm \widehat{W}^{\varepsilon,\alpha,x}_t.\]
      
      Thanks to corollary \ref{cor:semigroup}, we get:
      \[ \left|\widehat{\IE}^{\varepsilon,\alpha,x,T} \left[\psi^\varepsilon\left(X_t^{\varepsilon,x},0\right)\right] - \widehat{\IE}^{\varepsilon,\alpha,x',T}\left[\psi^\varepsilon\left(X_t^{\varepsilon,x'},0\right)\right]\right| \leq 2 M_\psi \chat \erm^{-\nuhat t}\left(1+|x|^2+|x'|^2\right),\]
      where $\chat$ and $\nuhat$ only depend on $\eta_1$, $\eta_2$, $r_1$, $r_2$, $K_z$ and $\left\|\sigma^{-1}\right\|_\infty$. As a consequence, we get:
      \begin{align}
      \left|v^{\varepsilon,\alpha}(x)-v^{\varepsilon,\alpha}(x')\right|
      &= \lim_{T \rightarrow \infty} \left|\int_0^T \erm^{-\alpha t} \left(\widehat{\IE}^{\varepsilon,\alpha,x,t}\left[\psi^\varepsilon\left(X_t^{\varepsilon,x},0\right)\right] - \widehat{\IE}^{\varepsilon,\alpha,x',t}\left[\psi^\varepsilon\left(X_t^{\varepsilon,x'},0\right)\right]\right) \, \drm t\right| \nonumber\\
      &\leq \int_0^\infty \erm^{-\alpha t} \left|\widehat{\IE}^{\varepsilon,\alpha,x,t}\left[\psi^\varepsilon\left(X_t^{\varepsilon,x},0\right)\right] - \widehat{\IE}^{\varepsilon,\alpha,x',t}\left[\psi^\varepsilon\left(X_t^{\varepsilon,x'},0\right)\right]\right| \, \drm t \nonumber \\
      &\leq 2 \int_0^\infty \erm^{-\alpha t} \chat M_\psi \erm^{-\nuhat t}\left(1+|x|^2+|x'|^2\right) \, \drm t 
       \leq 2 \frac{\chat M_\psi}{\nuhat} \left(1+|x|^2+|x'|^2\right).  
      \label{eq:vlocLip} 
      \end{align}

\item Now, our goal is to take the limit when $\varepsilon \rightarrow 0$. Let $D$ be a dense and countable subset of $\IR^d$. By a diagonal argument, there exists a positive sequence $\left(\varepsilon_n\right)_n$ such that $\left(v^{\varepsilon_n,\alpha}\right)_n$ converges pointwise over $D$ to a function $\vbar^\alpha$. Because the constant $C$ in equation (\ref{eq:valpha_lipschitz}) does not depend on $\varepsilon$, $\vbar^\alpha$ satisfies the same inequality. Let $\left(K_n\right)$ be a sequence of compact sets whose diameter goes to infinity. The function $\vbar^\alpha$ is uniformly continuous on $K_n \cap D$, so it has an extension $\vbar$ which is continuous on $K_n$. Passing to the limit as $n$ goes to infinity, we get a continuous function on $\IR^d$, and it is the pointwise limit of the sequence $\left(v^{\varepsilon_n,\alpha}\right)_n$. We denote $\Ybar_t^{\alpha,x} = \vbar^\alpha\left(X_t^x\right)$; we have $\dst \left|\Ybar_t^{\alpha,x}\right| \leq \frac{C}{\alpha} \left(1+ \left|X_t^x\right|\right)$. Using the Lemma 2.1 of \cite{MZ02}, we have, for every $T > 0$ :
      \[ \IE\left[\left|X^{\varepsilon_n,x} - X^x\right|^{*,2}_{0,T}\right] \leq C \left\{\left\|\Xi-\Xi^{\varepsilon_n}\right\|_\infty^2 T + \IE\left[\int_0^T \left|\sigma^{\varepsilon_n}\left(X_t^x\right) - \sigma\left(X_t^x\right) \right|^2 \, \drm t\right]\right\}.\]
      
      By dominated convergence, this quantity goes to $0$ as $n$ goes to infinity. Also by dominated convergence, we get:
      \[ \IE\left[\int_0^T \left|Y_t^{\varepsilon_n,\alpha,x} - \Ybar_t^{\alpha,x}\right|^2 \, \drm t\right] \underset{n \rightarrow \infty}{\longrightarrow} 0 \text{ and } \IE\left[\left|Y_T^{\varepsilon_n,\alpha,x}- \Ybar_T^{\alpha,x}\right|^2\right] \underset{n \rightarrow \infty}{\longrightarrow} 0.\]

\item We will show that there exists a process denoted $\Zbar^{\alpha,x}$ belonging to $\Lrm^2_{\Pcal,\loc}\left(\Omega,\Lrm^2\left(0,\infty;\left(\IR^d\right)^*\right)\right)$ which satisfies $\dst \IE\left[\int_0^T \left|Z_t^{\varepsilon_n,\alpha,x} - \Zbar_t^{\alpha,x}\right|^2 \, \drm t\right] \underset{n \rightarrow \infty}{\longrightarrow} 0$, for every $T > 0$. Indeed, $\left(\Ybar^{\alpha,x}, \Zbar^{\alpha,x}\right)$ is solution of the BSDE (\ref{eq:BSDE}) ; by uniqueness of the solution, $v^\alpha \equiv \vbar^\alpha$ and taking the limit in the equation (\ref{eq:vlocLip}) gives the result.

      Let $n \leq m \in \IN$, we define $\Ytilde = Y^{\varepsilon_n,\alpha,x} - Y^{\varepsilon_m,\alpha,x}$ and $\Ztilde = Z^{\varepsilon_n,\alpha,x} - Z^{\varepsilon_m,\alpha,x}$. 
      We have:
      \[ \drm \Ytilde_t = \alpha \Ytilde_t \, \drm t + \underbrace{\left(\psi^{\varepsilon_m}\left(X_t^{\varepsilon_m,x},Z_t^{\varepsilon_m,\alpha,x}\right) - \psi^{\varepsilon_n}\left(X_t^{\varepsilon_n,x},Z_t^{\varepsilon_n,\alpha,x}\right)\right)}_{=: \psi_t} \, \drm t + \Ztilde_t \, \drm W_t.\]
      
      Thanks to Itô's formula, we obtain:
      $ \dst \IE\left[\int_0^T \left|\Ztilde_t\right|^2 \, \drm t\right] 
      \leq \IE\left[\left|\Ytilde_T\right|^2\right] - 2 \IE\left[\int_0^T \Ytilde_t \psi_t \, \drm t\right].$
      
      But, we have: $\dst \left|\psi_t\right| \leq M_\psi \left(2+\left|X_t^{\varepsilon_m,x}\right| + \left|X_t^{\varepsilon_n,x}\right|\right) + K_z \left(\left|Z_t^{\varepsilon_m,x} \sigma^{\varepsilon_m}\left(X_t^{\varepsilon_m,x}\right)^{-1}\right| + \left|Z_t^{\varepsilon_n,x} \sigma^{\varepsilon_n}\left(X_t^{\varepsilon_n,x}\right)^{-1}\right|\right)$.
      
      So:
      \begin{align*}
      \IE\left[\int_0^T \left|\Ztilde_t\right|^2 \, \drm t\right] 
          &\leq 2 \IE\left[\left|\Ytilde_T\right|^2\right] + 2TM_\psi \IE\left[\left|\Ytilde\right|^*_{0,T} \left(2 + \left|X^{\varepsilon_m,x}\right|^*_{0,T} + \left|X^{\varepsilon_n,x}\right|^*_{0,T}\right)\right] \\
          &\pushright{\hspace{3cm} + 2 K_z \left\|\sigma^{-1}\right\|_\infty \IE\left[\left|\Ytilde\right|^*_{0,T} \int_0^T \left(\left|Z_t^{\varepsilon_m,x}\right| + \left|Z_t^{\varepsilon_n,x}\right|\right) \, \drm t\right].}
      \end{align*}
      
      Now, we need to bound $\IE\left[\left|X^{\varepsilon_n,x}\right|^{*,2}_ {0,T}\right]$ and $\dst \IE\left[\int_0^T \left|Z_t^{\varepsilon_n,\alpha,x}\right|^2 \, \drm t\right]$ independently of $n$. Thanks to proposition \ref{majo_X_Sp}, we have $\IE\left[\left|X^{\varepsilon_n,x}\right|^{*,2}_ {0,T}\right] \leq C \left(1+|x|^2\right)$, where $C$ only depends on $T$, $r_1$, $r_2$, $\xi_1$ and $\xi_2$. Also, we have:
      \[ \IE\left[\int_0^T \left|Z_t^{\varepsilon_n,\alpha,x}\right|^2 \, \drm t\right] \leq \IE\left[\left|Y_T^{\varepsilon_n,\alpha,x}\right|^2\right] + 2 \IE\left[\int_0^T \left|Y_t^{\varepsilon_n,\alpha,x} \psi^{\varepsilon_n}\left(X_t^{\varepsilon_n,x}, Z_t^{\varepsilon_n,\alpha,x}\right)\right| \, \drm t\right].\]
      But, using the estimate of Lemma \ref{lem:EU_BSDE},
      \begin{align*}
      \left|Y_t^{\varepsilon_n,\alpha,x} \psi\left(X_t^{\varepsilon_n,x}, Z_t^{\varepsilon_n,\alpha,x}\right)\right| &\leq \frac{C}{\alpha} \left(1+\left|X_t^{\varepsilon_n,x}\right|\right) \left(M_\psi\left(1+\left|X_t^{\varepsilon_n,x}\right|\right) +K_z \left\|\sigma^{-1}\right\|_\infty \left|Z_t^{\varepsilon_n,\alpha,x}\right|\right) \\
      &\leq \Ccal \left(1+\left|X_t^{\varepsilon_n,x}\right|^2\right) + \frac{1}{4} \left|Z_t^{\varepsilon_n,\alpha,x}\right|^2.
      \end{align*}
      
      Finally, $\dst \IE\left[\int_0^T \left|Z_t^{\varepsilon_n,\alpha,x}\right|^2 \, \drm t\right] \leq \Ccal' \left(1+|x|^2\right)$, with $\Ccal'$ depending only on $M_\psi$, $\eta_1$, $\eta_2$, $r_1$, $r_2$, $K_z$, $\left\|\sigma^{-1}\right\|_\infty$, $\alpha$, $T$, $\xi_1$ and $\xi_2$, and $\left(Z^{\varepsilon_n,\alpha,x}\right)_n$ is a Cauchy sequence in $\Lrm^2_\Pcal\left(\Omega, \Lrm^2\left([0,T], \left(\IR^d\right)^*\right)\right)$; we can define its limit process $\Zbar^{\alpha,x} \in \Lrm^2_{\Pcal,\loc}\left(\Omega, \Lrm^2\left(0,\infty;\left(\IR^d\right)^*\right)\right)$, and it satisfies the convergence we claimed.
\end{enumerate}
\end{proof}

\begin{prop}
Under Assumption \ref{ass_EBSDE}, there exists a constant $C$, such that for every $\alpha \in (0,1]$, we have:
\[ \forall x,x' \in \IR^d,\, \left|v^\alpha(x) - v^\alpha(x')\right| \leq C \left(1+|x|^2+|x'|^2\right)|x-x'|.\]
The constant $C$ only depends on $\eta_1$, $\eta_2$, $r_1$, $r_2$, $K_z$, $\left\|\sigma^{-1}\right\|_\infty$, $M_\psi$, $K_x$, $\|\Xi\|_\lip$ and $\|\sigma\|_\lip$.
\label{prop:valpha_locLipschitz}
\end{prop}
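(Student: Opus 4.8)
The plan is to deduce the statement from the two estimates already available for $v^\alpha$, splitting on the size of $|x-x'|$. If $|x-x'|\geq 1$, Proposition~\ref{prop:valpha_quadratic} gives at once
\[
\left|v^\alpha(x)-v^\alpha(x')\right|\ \leq\ C\left(1+|x|^2+|x'|^2\right)\ \leq\ C\left(1+|x|^2+|x'|^2\right)|x-x'|,
\]
with $C$ independent of $\alpha$, so it remains to treat $|x-x'|<1$.

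For small increments I would re-run the proof of Lemma~\ref{lem:valpha_lipschitz} --- that is, apply Theorem~\ref{thm:u_Lipschitz} of the appendix with $f:(x,y,z)\mapsto\psi(x,z)-\alpha y$ and $g=v^\alpha$ --- but replacing the role of the a priori bound $|Y_t^{\alpha,x}|\leq\tfrac{C}{\alpha}(1+|X_t^x|)$ of Lemma~\ref{lem:EU_BSDE} by the $\alpha$-uniform quadratic increment bound $|v^\alpha(x)-v^\alpha(x')|\leq C(1+|x|^2+|x'|^2)$ of Proposition~\ref{prop:valpha_quadratic}. The mechanics are those of the appendix and of the proof of Proposition~\ref{prop:valpha_quadratic}: on $[0,T]$ the pair $(Y^{\alpha,x},Z^{\alpha,x})$ solves a finite-horizon BSDE with terminal datum $v^\alpha(X_T^x)$; since $\psi$ is Lipschitz in $z\sigma(x)^{-1}$ with the \emph{bounded} coefficient $K_z\|\sigma^{-1}\|_\infty$, a Girsanov change of probability removes the $Z$-dependence of the driver, and the $|x-x'|$ factor is then extracted by differentiating in the initial condition the flow of the underlying forward SDE --- which is where $\|\Xi\|_\lip$ and $\|\sigma\|_\lip$ enter, together with the $K_x$ from the $|x-x'|$-part of the assumption on $\psi$ --- while the exponential coupling decay of Corollary~\ref{cor:semigroup}, legitimately applied with $\mu=2$ because Assumption~\ref{ass_EBSDE} forces $\sqrt{r_2}K_z\|\sigma^{-1}\|_\infty+\tfrac{r_2}{2}<\eta_2$ to survive the modification of the drift, makes the time integral converge. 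The terminal contribution carries the factor $\erm^{-\alpha T}$ and disappears as $T\to\infty$, which is exactly why the $\alpha$-uniform control of the \emph{increments} of $v^\alpha$ from Proposition~\ref{prop:valpha_quadratic} is enough there. One obtains the claimed bound on the regularised level and then removes the regularisation of $\sigma$, $\Xi$ and $\psi$ (and of the shift appearing in the Girsanov transform) by the diagonal extraction and $\Lrm^2$-stability of the forward SDE and the BSDE used in the last step of the proof of Proposition~\ref{prop:valpha_quadratic}, the moment bounds of Propositions~\ref{majo_X_Sp} and~\ref{prop:sup_fini} providing the needed uniform estimates.

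The crux --- and the reason this is relegated to the appendix --- is to perform this bookkeeping so that the resulting constant is uniform in $\alpha\in(0,1]$ \emph{and} in all regularisation parameters, while the power of $|x-x'|$ stays equal to $1$ (and does not degrade to a Hölder exponent). Uniformity in $\alpha$ is bought by the substitution described above: the $\tfrac1\alpha$ blow-up that spoils Lemma~\ref{lem:valpha_lipschitz} is confined to terms weighted by $\erm^{-\alpha T}$ and is killed in the limit $T\to\infty$, the $\alpha$-free quadratic growth factor of Proposition~\ref{prop:valpha_quadratic} being the price. Uniformity in the regularisation is more delicate, because the $\Ccal^1$-approximation of the driving shift has a Lipschitz constant that explodes; one checks that none of the quantities actually used --- the moment and coupling estimates (Proposition~\ref{prop:sup_fini} and Corollary~\ref{cor:semigroup}) and the Girsanov densities --- depends on it, only on $\eta_1,\eta_2,r_1,r_2,K_z,\|\sigma^{-1}\|_\infty$ and, for the flow-regularity part, on $\|\Xi\|_\lip$ and $\|\sigma\|_\lip$, which is precisely the list of constants in the statement.
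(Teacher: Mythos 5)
Your reduction of the case $|x-x'|\geq 1$ to Proposition \ref{prop:valpha_quadratic} is fine, but the mechanism you propose for the main case $|x-x'|<1$ does not produce the factor $|x-x'|$. After the Girsanov linearisation in $z$ you are left with the representation $v^{\varepsilon,\alpha}(x)=\IEhat^{\varepsilon,\alpha,x,T}\bigl[\erm^{-\alpha T}v^{\varepsilon,\alpha}\left(X_T^{\varepsilon,x}\right)+\int_0^T \erm^{-\alpha s}\psi^\varepsilon\left(X_s^{\varepsilon,x},0\right)\drm s\bigr]$, and the only tools at your disposal to compare the two sides at $x$ and $x'$ are (i) the coupling estimate of Corollary \ref{cor:semigroup}, which bounds $\left|\Pcal_t[\phi](x)-\Pcal_t[\phi](x')\right|$ by $\chat c_\phi\left(1+|x|^\mu+|x'|^\mu\right)\erm^{-\nuhat t}$ with no $|x-x'|$ factor at all --- this is precisely why Proposition \ref{prop:valpha_quadratic} only yields the increment bound without $|x-x'|$ --- and (ii) differentiation of the flow of the forward equation in its initial condition. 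But the forward equation relevant after the measure change is the modified one, whose drift contains $\sigma(\cdot)\Gamma^{\varepsilon,\alpha}(\cdot)^*$ with $\Gamma^{\varepsilon,\alpha}$ only bounded and continuous, with no modulus of continuity controlled uniformly in $\alpha$ or in the regularisation: the derivative of that flow is exactly the quantity whose Lipschitz constant explodes, and your argument uses it rather than avoids it. Moreover $\IPhat^{\varepsilon,\alpha,x,T}$ and $\IPhat^{\varepsilon,\alpha,x',T}$ are different measures, so ``differentiating in the initial condition'' is not even well posed before rewriting both expectations under a common measure, which sends you back to the coupling estimate and its missing $|x-x'|$. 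The vanishing of the terminal term as $T\to\infty$ for fixed $\alpha$ is the easy part; the crux --- extracting $|x-x'|$ from the time integral with a constant uniform in $\alpha$ and in the regularisation --- is not established by your sketch, and the constants you list cannot be reached this way.

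The paper takes a genuinely different route, which is why the work sits in the appendix: it stays on a fixed finite horizon and applies Theorem \ref{thm:u_locLipschitz}, where the factor $|x-x'|$ comes from a bound on $\partial_x u$ obtained through the identification $Z_s^{t,x}=\partial_x u\left(s,X_s^{t,x}\right)\sigma\left(X_s^{t,x}\right)$ and a submartingale argument for $\widetilde{R}$ (in the spirit of \cite{Richou11}); there the Girsanov shift is $\partial_z f(\Theta)$, bounded by $K_z\left\|\sigma^{-1}\right\|_\infty$, and the flow being differentiated is that of the \emph{original} SDE with Lipschitz $\Xi$ and $\sigma$ --- no coupling, no $T\to\infty$, no differentiation of a non-Lipschitz modified drift. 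The function $v^\alpha$ enters only as terminal datum through a moment of the type $\IE\left[\left|g\left(X_T^{t,x}\right)\right|^4\right]^{\frac14}$, and the $\alpha$-uniformity is then supplied by Proposition \ref{prop:valpha_quadratic} (together with the bound on $\alpha v^\alpha(0)$, one may replace $v^\alpha$ by $v^\alpha-v^\alpha(0)$, which changes neither the increments nor the driver beyond a bounded constant). If you insist on your route, you would need a new ingredient the paper does not contain: a gradient-type coupling estimate of the form $\left|\Pcal_t[\phi](x)-\Pcal_t[\phi](x')\right|\leq C\left(1+|x|^\mu+|x'|^\mu\right)\erm^{-\nu t}|x-x'|$ for the modified semigroup, uniform in $\alpha$, which is not implied by Corollary \ref{cor:semigroup}.
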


\begin{proof}
See Theorem \ref{thm:u_locLipschitz} given in the appendix, with $f : (x,y,z) \mapsto \psi(x,z) - \alpha y$ and $g = v^\alpha$.
\end{proof}

\begin{thm}[Existence of solutions to the EBSDE]
Under Assumption \ref{ass_EBSDE}, there exists a real number $\lambdabar$, a locally Lipschitz function $\vbar$, which satisfies $\vbar(0)=0$, and a process $\Zbar^x \in \Lrm^2_{\Pcal,\loc}\left(\Omega, \Lrm^2\left(0,\infty; \left(\IR^d\right)^*\right)\right)$ such that if we define $\Ybar_t^x = \vbar\left(X_t^x\right)$, then the EBSDE (\ref{eq:EBSDE}) is satisfied by $\left(\Ybar^x, \Zbar^x, \lambdabar\right)$ $\IP$-a.s. and for all $0 \leq t \leq T < \infty$. Moreover, there exists $C > 0$ such that for all $x \in \IR^d$, $\left|\vbar(x)\right| \leq C \left(1+|x|^2\right)$, and there exists $\zetabar$ measurable such that $\Zbar_t^x = \zetabar\left(X_t^x\right)$ $\IP$-a.s. and for a.e. $t \geq 0$.
\label{thm:existence_EBSDE}
\end{thm}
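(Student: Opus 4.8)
The plan is to follow the now-classical route for EBSDEs: build the solution as a limit, when the discount parameter $\alpha$ tends to $0$, of the discounted BSDEs $\left(Y^{\alpha,x},Z^{\alpha,x}\right)$ given by Lemma~\ref{lem:EU_BSDE}, after removing the divergent additive constant $v^\alpha(0)$. Set $\vbar^\alpha:=v^\alpha-v^\alpha(0)$, so that $\vbar^\alpha(0)=0$; applying Proposition~\ref{prop:valpha_quadratic} and Proposition~\ref{prop:valpha_locLipschitz} with $x'=0$ shows that, uniformly in $\alpha\in(0,1]$, $\left|\vbar^\alpha(x)\right|\le C\left(1+|x|^2\right)$ and $\vbar^\alpha$ is Lipschitz on each ball with a constant depending only on the radius. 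Moreover $\left|\alpha v^\alpha(0)\right|=\left|\alpha Y^{\alpha,0}_0\right|\le C$ by the a priori bound of Lemma~\ref{lem:EU_BSDE}. Hence the It\^o-free ingredients are in place: Arzel\`a--Ascoli on an exhausting sequence of balls together with a diagonal extraction yields a sequence $\alpha_n\downarrow0$ along which $\vbar^{\alpha_n}\to\vbar$ locally uniformly, with $\vbar$ continuous, locally Lipschitz, $\vbar(0)=0$ and $\left|\vbar(x)\right|\le C\left(1+|x|^2\right)$, and along which $\alpha_nv^{\alpha_n}(0)\to\lambdabar$ for some $\lambdabar\in\IR$. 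We then set $\Ybar^x_t:=\vbar\left(X^x_t\right)$.

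Next I would construct $\Zbar^x$. Writing $\Ybar^{\alpha,x}:=Y^{\alpha,x}-v^\alpha(0)$, the BSDE~(\ref{eq:BSDE}) reads, on $[0,T]$,
\[ \Ybar^{\alpha,x}_t=\Ybar^{\alpha,x}_T+\int_t^T\left[\psi\left(X^x_s,Z^{\alpha,x}_s\right)-\alpha v^\alpha(0)-\alpha\Ybar^{\alpha,x}_s\right]\drm s-\int_t^T Z^{\alpha,x}_s\,\drm W_s. \]
Applying It\^o's formula to $\left|\Ybar^{\alpha,x}\right|^2$ and using $\left|\Ybar^{\alpha,x}_t\right|\le C\left(1+\left|X^x_t\right|^2\right)$, $\alpha\le1$, $\left|\alpha v^\alpha(0)\right|\le C$, the bounds $\left|\psi(x,z)\right|\le M_\psi(1+|x|)+K_z\left\|\sigma^{-1}\right\|_\infty|z|$, Young's inequality to absorb $\tfrac14\IE\int_0^T\left|Z^{\alpha,x}_s\right|^2\drm s$, and the moment estimates of Proposition~\ref{majo_X_Sp}, yields $\IE\int_0^T\left|Z^{\alpha,x}_s\right|^2\drm s\le C_T\left(1+|x|^q\right)$ with $C_T,q$ independent of $\alpha\in(0,1]$. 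Then, for $n\le m$, with $\Ytilde:=\Ybar^{\alpha_n,x}-\Ybar^{\alpha_m,x}$, $\Ztilde:=Z^{\alpha_n,x}-Z^{\alpha_m,x}$, It\^o's formula applied to $\left|\Ytilde\right|^2$ together with the $z$-Lipschitz bound on the $\psi$-difference (absorbed into $\tfrac14\IE\int_0^T\left|\Ztilde_s\right|^2\drm s$) gives
\[ \tfrac34\,\IE\int_0^T\left|\Ztilde_s\right|^2\drm s\le\IE\left[\left|\Ytilde_T\right|^2\right]+C\,\IE\int_0^T\left|\Ytilde_s\right|^2\drm s+C_T\left(\alpha_n+\alpha_m+\left|\alpha_nv^{\alpha_n}(0)-\alpha_mv^{\alpha_m}(0)\right|\right)\left(1+|x|^q\right). \]
Since $\Ytilde_s=\left(\vbar^{\alpha_n}-\vbar^{\alpha_m}\right)\left(X^x_s\right)$, the local uniform convergence of $\vbar^{\alpha_n}$, the uniform quadratic domination and the moment bounds make the right-hand side tend to $0$ as $n\le m\to\infty$ (the last term because $\alpha_n\to0$ and $\alpha_nv^{\alpha_n}(0)$ is Cauchy). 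Hence $\left(Z^{\alpha_n,x}\right)_n$ is Cauchy in $\Lrm^2_\Pcal\left(\Omega,\Lrm^2\left([0,T],\left(\IR^d\right)^*\right)\right)$ for every $T$; call $\Zbar^x\in\Lrm^2_{\Pcal,\loc}\left(\Omega,\Lrm^2\left(0,\infty;\left(\IR^d\right)^*\right)\right)$ the limit.

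Then one passes to the limit termwise in the displayed equation along $\alpha_n$, on a fixed horizon $[0,T]$: $\Ybar^{\alpha_n,x}_t\to\Ybar^x_t$ and $\Ybar^{\alpha_n,x}_T\to\Ybar^x_T$ pointwise and in $\Lrm^2$ by dominated convergence; $\int_t^T\psi\left(X^x_s,Z^{\alpha_n,x}_s\right)\drm s\to\int_t^T\psi\left(X^x_s,\Zbar^x_s\right)\drm s$ in $\Lrm^1$ by the $z$-Lipschitz bound and the $\Lrm^2$-convergence of $Z^{\alpha_n,x}$; $\int_t^T\alpha_nv^{\alpha_n}(0)\,\drm s\to\lambdabar(T-t)$; $\left|\int_t^T\alpha_n\Ybar^{\alpha_n,x}_s\drm s\right|\le C\alpha_n\int_t^T\left(1+\left|X^x_s\right|^2\right)\drm s\to0$ in $\Lrm^1$; and $\int_t^T Z^{\alpha_n,x}_s\drm W_s\to\int_t^T\Zbar^x_s\drm W_s$ in $\Lrm^2$ by the It\^o isometry. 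This yields (\ref{eq:EBSDE}) for $\left(\Ybar^x,\Zbar^x,\lambdabar\right)$ on every $[0,T]$, hence on $\IR_+$; the quadratic bound on $\vbar$ and $\vbar(0)=0$ are already in hand, and for the measurable representation of $\Zbar^x$ one writes $Z^{\alpha,x}_t=\zeta^\alpha\left(X^x_t\right)$ (Lemma~\ref{lem:EU_BSDE}, after Lemma~4.4 of \cite{Royer04}), extracts a further subsequence along which $Z^{\alpha_n,x}\to\Zbar^x$ for $\drm t\otimes\drm\IP$-a.e.\ $(t,\omega)$, and invokes the absolute continuity of the law of $X^x_t$ (non-degeneracy of $\sigma$) so that the pointwise $\limsup$ of the $\zeta^{\alpha_n}$ defines a measurable $\zetabar$ with $\Zbar^x_t=\zetabar\left(X^x_t\right)$ $\IP$-a.s.\ for a.e.\ $t\ge0$.

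The main obstacle is the construction of $\Zbar^x$: one needs an $\Lrm^2([0,T])$-estimate on $Z^{\alpha,x}$ that is uniform in $\alpha\in(0,1]$ and then a Cauchy estimate, which is why one must work with the recentred process $\Ybar^{\alpha,x}$ — quadratically dominated in $X^x$ uniformly in $\alpha$, unlike $Y^{\alpha,x}$ whose a priori bound degenerates like $1/\alpha$ — and carry the quadratic-in-$X$ growth through It\^o's formula with the help of Proposition~\ref{majo_X_Sp}. Everything else is the standard stability argument for BSDEs with a $z$-Lipschitz, monotone-in-$y$ driver.
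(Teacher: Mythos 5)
Your construction of $(\vbar,\lambdabar)$, the Cauchy argument for $\bigl(Z^{\alpha_n,x}\bigr)_n$ and the term-by-term passage to the limit in the BSDE follow essentially the paper's Steps 1--3: recentring by $v^\alpha(0)$, using Propositions \ref{prop:valpha_quadratic} and \ref{prop:valpha_locLipschitz} for the (diagonal or Arzel\`a--Ascoli) extraction, and It\^o's formula on $[0,T]$ for the $Z$'s. Working throughout with the recentred $\Ybar^{\alpha,x}$ is in fact a cleaner rendition of the paper's sketch of Step 2, and your handling of the terms $\alpha_n v^{\alpha_n}(0)-\alpha_m v^{\alpha_m}(0)$ and $\alpha\Ybar^{\alpha,x}$ is correct.

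The gap is in the final assertion: the existence of \emph{one} measurable $\zetabar$ with $\Zbar^x_t=\zetabar\left(X^x_t\right)$ for \emph{every} $x$ (this is how the representation is used later, e.g.\ in Theorem \ref{thm:uniq_EBSDE}). You extract, for a fixed $x$, a further subsequence along which $Z^{\alpha_n,x}\to\Zbar^x$ $\drm t\otimes\drm\IP$-a.e.; but that subsequence depends on $x$, so the function obtained as a pointwise $\limsup$ of the $\zeta^{\alpha_n}$ need not be the same for different starting points (along the full sequence you only have convergence in measure, and the $\limsup$ need not coincide with that limit). The appeal to absolute continuity of the law of $X^x_t$ does not repair this: the paper only establishes irreducibility of $X^x$, not absolute continuity of its marginals; and even granting it, to transport a limit function identified through one starting point to all others you would need the occupation measures of $X^x$ to be comparable to a common reference measure \emph{from both sides} (e.g.\ strictly positive transition densities), which is not available under Assumption \ref{ass_EBSDE}. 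The paper's Step 4 is exactly the work that fills this hole: it proves the continuity-in-$x$ estimate \[\IE\left[\int_0^T\left|Z^{\alpha,x}_t-Z^{\alpha,x'}_t\right|^2\,\drm t\right]\le\Ccal\left(1+|x|^4+|x'|^4\right)|x-x'|^2,\] obtained from Lemma 2.1 of \cite{MZ02} together with Propositions \ref{majo_X_Sp} and \ref{prop:valpha_locLipschitz}, then uses a diagonal argument over a countable dense set to select a single subsequence $\left(\alpha'_n\right)$ with a summable rate valid for all $x$ simultaneously, and concludes by Borel--Cantelli that $\zeta^{\alpha'_n}\left(X^x_t\right)$ converges a.s.\ for a.e.\ $t$ and every $x$, so that $\zetabar:=\lim_n\zeta^{\alpha'_n}$ (set to $0$ where the limit fails to exist) works for all $x$ at once. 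Your proposal is missing both this estimate and the uniform-in-$x$ subsequence extraction it enables; the rest of the argument stands.
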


\begin{proof}~\\
The strategy is the same as in the Theorem 4.4 of \cite{EBSDE1}. We give a sketch of the proof here for completeness.
\begin{description}
\item[Step 1:] Construction of $\vbar$ by a diagonal procedure.\\
      For every $\alpha > 0$, we define $\vbar^\alpha (x) = v^\alpha(x) - v^\alpha(0)$; we recall that $\left|\vbar^\alpha(x)\right| \leq C \left(1+|x|^2\right)$ and $\left|\alpha v^\alpha(0)\right| \leq C$, with $C$ independent of $\alpha$. Let $D$ be a countable dense set in $\IR^d$; by a diagonal argument, we can construct a sequence $\left(\alpha_n\right)$, such that $\left(\vbar^{\alpha_n}\right)_n$ converges pointwise over $D$ to a function $\vbar$ and $\alpha_n v^{\alpha_n}(0) \underset{n \rightarrow \infty}{\longrightarrow} \lambdabar$, for a convenient real number $\lambdabar$. Moreover, thanks to the previous proposition:
      \[ \exists C > 0,\, \forall \alpha \in (0,1],\, \forall x,x' \in \IR^d,\, \left|\vbar^\alpha(x) - \vbar^\alpha(x')\right| \leq C \left(1+|x|^2+|x'|^2\right)|x-x'|.\]
      Because it is uniformly continuous on every compact subset of $D$, $\vbar$ has an extension which is continuous on $\IR^d$. Then, we can show that $\vbar$ is the pointwise limit of the functions $\vbar^{\alpha_n}$ on $\IR^d$, and then $\vbar$ is locally Lipschitz continuous and has quadratic growth.
\item[Step 2:] Construction of the process $\Zbar^x$.\\
      We will show that $\left(Z^{\alpha_n,x}\right)_n$ is Cauchy in $\Lrm^2_\Pcal\left(\Omega;\Lrm^2\left([0,T];\left(\IR^d\right)^*\right)\right)$ for every $T > 0$. Then, we will be able to define $\Zbar^x \in \Lrm^2_{\Pcal,\loc}\left(\Omega;\Lrm^2\left(0,\infty;\left(\IR^d\right)^*\right)\right)$. When $n \leq m \in \IN$, we set $\Ytilde = Y^{\alpha_n,x} - Y^{\alpha_m,x}$ and $\Ztilde = Z^{\alpha_n,x} - Z^{\alpha_m,x}$; we have:
      \[ \drm \Ytilde_t = -\psitilde_t \, \drm t + \left(\alpha_n Y_t^{\alpha_n,x} - \alpha_m Y_t^{\alpha_m,x}\right) \, \drm t + \Ztilde_t \, \drm W_t,\]
      where $\psitilde_t = \psi\left(X_t^x, Z_t^{\alpha_n,x}\right) - \psi\left(X_t^x,Z_t^{\alpha_m,x}\right)$. Thanks to Itô's formula:
      \begin{align*}
      \IE\left[\int_0^T \left|\Ztilde_t\right|^2 \, \drm t\right] 
      &= \IE\left[\left|\Ytilde_T\right|^2\right] - \left|Y_0\right|^2 + 2\IE\left[\int_0^T \psitilde_t \Ytilde_t \, \drm t\right] - 2\IE\left[\int_0^T \left(\alpha_n Y_t^{\alpha_n, x} - \alpha_m Y_t^{\alpha_m, x} \right) \Ytilde_t \, \drm t\right] \\
      &\leq \IE\left[\left|\Ytilde_T\right|^2\right] + 2 K_z \left\|\sigma^{-1}\right\|_\infty \IE\left[\int_0^T \left|\Ytilde_t\right| \left|\Ztilde_t\right| \, \drm t\right] + 4 M_\psi \IE\left[\int_0^T \left|\Ytilde_t\right| \, \drm t\right].
      \end{align*}
      By Cauchy-Schwarz, and noting that $\dst \left|\Ytilde_t \right| \left|\Ztilde_t\right| \leq K_z \left\|\sigma^{-1}\right\|_\infty \left|\Ytilde_t\right|^2 + \frac{1}{4 K_z \left\|\sigma^{-1}\right\|_\infty} \left|\Ztilde_t\right|^2$, we obtain:
      \[ \IE\left[\int_0^T \left|\Ztilde_t\right|^2 \, \drm t\right] \leq 2 \IE\left[\left|\Ytilde_T\right|^2\right] + 4 K_z^2 \left\|\sigma^{-1}\right\|_\infty^2 \IE\left[\int_0^T \left|\Ytilde_t\right|^2 \, \drm t\right] + 8 M_\psi \sqrt{T} \IE\left[\int_0^T \left|\Ytilde_t\right|^2\, \drm t\right]^{\frac{1}{2}}.\]
      Dominated convergences (using propositions \ref{prop:valpha_quadratic} and \ref{majo_X_Sp}) give us the result claimed.
\item[Step 3:] $\left(\Ybar^x, \Zbar^x, \lambdabar\right)$ is a solution to the EBSDE (\ref{eq:EBSDE}).\\
      Taking the limit in the BSDE satisfied by $\left(Y^{\alpha_n, x}, Z^{\alpha_n, x}\right)$ gives us:
      \[ \Ybar_t^x = \Ybar_T^x + \int_t^T \left[\psi\left(X_s^x, \Zbar_s^x\right) - \lambdabar\right] \, \drm s - \int_t^T \Zbar^x_s \, \drm W_s.\]
\item[Step 4:] $\Zbar^x$ can be represented as a measurable function of $X^x$. We fix $T > 0$. We denote $\Delta \bullet := \bullet^{\alpha,x} - \bullet^{\alpha,x'}$. By standard calculations, for every $\alpha \in (0,1]$, $x,x' \in \IR^d$:
      \begin{align*}
      \IE\left[\int_0^T \left|\Delta Z_t\right|^2 \, \drm t\right]
        &\leq \IE\left[\left|\Delta Y\right|^{*,2}_{0,T}\right] + 2 K_x T \IE\left[\left|\Delta Y \right|^*_{0,T} \left|\Delta X\right|^*_{0,T}\right] + 2 K_z \left\|\sigma^{-1}\right\|_\infty \IE\left[\int_0^T \left|\Delta Y \right|^*_{0,T} \left|\Delta Z_t\right| \, \drm t\right] \\
        &\pushright{+ 4 K_z \left\|\sigma^{-1}\right\|_\infty \IE\left[\int_0^T \left|\Delta Y \right|^*_{0,T} \left|Z_t^{\alpha,x'}\right| \, \drm t\right]} \\
        &\leq 2 \IE\left[\left|\Delta Y\right|^{*,2}_{0,T}\right] + 4 K_x T \IE\left[\left|\Delta Y \right|^*_{0,T} \left|\Delta X\right|^*_{0,T}\right] + 2 K_z^2 \left\|\sigma^{-1}\right\|_\infty^2 T \IE\left[\left|\Delta Y \right|^{*,2}_{0,T}\right]\\
        &\pushright{ + 8 K_z \left\|\sigma^{-1}\right\|_\infty \IE\left[\int_0^T \left|\Delta Y \right|^*_{0,T} \left|Z_t^{\alpha,x'}\right| \, \drm t\right]}
      \end{align*}
      
      But, using Cauchy-Schwarz inequality and the estimate of the end of the proof of proposition \ref{prop:valpha_quadratic}, we get:
      \[ \IE\left[\int_0^T \left|\Delta Y \right|^*_{0,T} \left|Z_t^{\alpha,x'}\right| \, \drm t\right] \leq C \IE\left[\left|\Delta Y\right|^{*,2}_{0,T}\right]^{\frac{1}{2}} (1+|x|).\]
      
      Using Lemma 2.1 of \cite{MZ02} and propositions \ref{majo_X_Sp} and \ref{prop:valpha_locLipschitz}, we finally get:
      \begin{equation}
      \IE\left[\int_0^T \left|Z_t^{\alpha,x} - Z_t^{\alpha,x'}\right|^2 \, \drm t\right] \leq \Ccal \left(1+|x|^4 + |x'|^4\right) |x-x'|^2,
      \label{eq:Zalpha_locLipschitz}
      \end{equation}
      where $\Ccal$ is independent of $x$ and $x'$, but depends on $\alpha$ and $T$. For every $x \in \IR^d$, the sequence $\dst \left(\IE\left[\int_0^T \left|Z_t^{\alpha_n,x}-Z_t^{\alpha_m,x}\right|^2 \, \drm t\right]\right)_{n \leq m \in \IN}$ is bounded (it converges). By a diagonal procedure, there exists a subsequence $\left(\alpha'_n\right) \subset \left(\alpha_n\right)$ such that:
      \[ \forall x \in D,\, \forall n \leq m \in \IN,\, \IE\left[\int_0^T \left|Z_t^{\alpha'_n,x} - Z_t^{\alpha'_m,x}\right|^2 \, \drm t\right] \leq 2^{-n}.\]
      Equation (\ref{eq:Zalpha_locLipschitz}) extends this inequality to $\IR^d$. Borel-Cantelli theorem gives for a.e. $t \in [0,T]$, $Z_t^{\alpha'_n,x} \underset{n \rightarrow \infty}{\longrightarrow} \Zbar^x_t$ $\IP$-a.s. Set
      \[ \zetabar(x) = \begin{cases} \lim_n \zeta^{\alpha'_n}(x), & \text{if the limit exists}, \\
                                     0, & \text{elsewhere.}
                       \end{cases}\]
      For a.e. $t \in [0,T]$, $X_t^x$ belongs $\IP$-a.s. to the set where $\lim_n \zeta^{\alpha'_n}(x)$ exists ; $\Zbar_t^x = \zetabar\left(X_t^x\right)$ $\IP$-a.s. and for a.e. $t \in [0,T]$.
\end{description}
\end{proof}

\subsection{Uniqueness of the solution}

\begin{thm}[Uniqueness of the parameter $\lambda$]
Let $p > 0$; we suppose that $\sqrt{r_2} K_z \left\|\sigma^{-1}\right\|_\infty + [(p \vee 2) -1] \dfrac{r_2}{2} < \eta_2$ and that Assumption \ref{ass_EBSDE} holds true.
We suppose that, for some $x \in \IR^d$, $(Y',Z',\lambda')$ verifies the EBSDE (\ref{eq:EBSDE}) $\IP$-a.s. and for all $0 \leq t \leq T < \infty$, where $Y'$ is a progressively measurable continuous process, $Z'$ is a process in $\Lrm^2_{\Pcal,\loc}\left(\Omega, \Lrm^2\left(0,\infty; \left(\IR^d\right)^*\right)\right)$ and $\lambda' \in \IR$. Finally, we assume that there exists $c_x > 0$ (that may depend on $x$) such that
\[ \forall t \geq 0, |Y'_t| \leq c_x \left(1+\left|X_t^x\right|^p\right).\]
Then $\lambda' = \lambdabar$.
\label{thm:lambda_uniq}
\end{thm}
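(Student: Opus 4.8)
The plan is to compare the solution $\left(\Ybar^x,\Zbar^x,\lambdabar\right)$ constructed in Theorem~\ref{thm:existence_EBSDE} with the given triple $(Y',Z',\lambda')$, following the scheme of Theorem~4.4 of \cite{EBSDE1}. Set $\delta Y_t:=Y'_t-\Ybar_t^x$ and $\delta Z_t:=Z'_t-\Zbar_t^x$; subtracting the two copies of the EBSDE~(\ref{eq:EBSDE}) gives, for all $0\leq t\leq T<\infty$,
\[
\delta Y_t=\delta Y_T+\int_t^T\left[\psi\left(X_s^x,Z'_s\right)-\psi\left(X_s^x,\Zbar_s^x\right)-(\lambda'-\lambdabar)\right]\drm s-\int_t^T\delta Z_s\,\drm W_s .
\]
Since by Assumption~\ref{ass_EBSDE} the map $\psi(x,\cdot)$ is Lipschitz continuous with constant $K_z\left\|\sigma^{-1}\right\|_\infty$, there is a progressively measurable process $\beta$ with $|\beta_s|\leq K_z\left\|\sigma^{-1}\right\|_\infty$ such that $\psi\left(X_s^x,Z'_s\right)-\psi\left(X_s^x,\Zbar_s^x\right)=\delta Z_s\,\beta_s^{*}$. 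Because $\beta$ is bounded, Novikov's criterion applies and there is a probability $\IPhat^{x,T}$ under which $\What^{x,T}_t:=W_t-\int_0^t\beta_s^{*}\,\drm s$ is a Brownian motion on $[0,T]$; in these terms the equation reads $\delta Y_t=\delta Y_T-(\lambda'-\lambdabar)(T-t)-\int_t^T\delta Z_s\,\drm\What^{x,T}_s$.

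The second step is to take the $\IEhat^{x,T}$-expectation of the pathwise identity $N_t:=\int_0^t\delta Z_s\,\drm\What^{x,T}_s=\delta Y_0-\delta Y_t-(\lambda'-\lambdabar)t$. On $[0,T]$ the continuous local martingale $N$ is dominated in absolute value by $\Zeta:=|\delta Y_0|+\sup_{s\leq T}|\delta Y_s|+|\lambda'-\lambdabar|T$, and $\sup_{s\leq T}|\delta Y_s|\leq c_x\left(1+|X^x|^{*,p}_{0,T}\right)+C\left(1+|X^x|^{*,2}_{0,T}\right)$ is $\IPhat^{x,T}$-integrable: the Girsanov density belongs to every $\Lrm^q(\IP)$ since $\beta$ is bounded, while $|X^x|^{*,p}_{0,T},\,|X^x|^{*,2}_{0,T}\in\Lrm^q(\IP)$ for all $q$ by Proposition~\ref{majo_X_Sp}, so Hölder's inequality gives $\Zeta\in\Lrm^1\!\left(\IPhat^{x,T}\right)$. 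A local martingale dominated by an integrable random variable is a genuine (uniformly integrable) martingale — one localizes and passes to the limit by conditional dominated convergence — hence $\IEhat^{x,T}[N_T]=0$, i.e.
\[
\lambda'-\lambdabar=\frac{\delta Y_0-\IEhat^{x,T}\!\left[\delta Y_T\right]}{T}.
\]

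The last step is to bound the numerator uniformly in $T$ and let $T\to\infty$. From $|\delta Y_T|\leq c_x\left(1+|X_T^x|^p\right)+C\left(1+|X_T^x|^2\right)$ it suffices to control $\IEhat^{x,T}\!\left[|X_T^x|^p\right]$ and $\IEhat^{x,T}\!\left[|X_T^x|^2\right]$, which is exactly Proposition~\ref{prop:sup_fini} applied with $\gamma=\beta$ (so $\|\gamma\|_\infty\leq K_z\left\|\sigma^{-1}\right\|_\infty$) and the two exponents $p$ and $2$: the standing hypothesis $\sqrt{r_2}K_z\left\|\sigma^{-1}\right\|_\infty+[(p\vee2)-1]\tfrac{r_2}{2}<\eta_2$ dominates both of the smallness conditions $\sqrt{r_2}\|\gamma\|_\infty+\tfrac{(p\vee2)-1}{2}r_2<\eta_2$ and $\sqrt{r_2}\|\gamma\|_\infty+\tfrac{r_2}{2}<\eta_2$ needed there. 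This yields $\left|\lambda'-\lambdabar\right|\leq C_x/T$ with $C_x$ independent of $T$, whence $\lambda'=\lambdabar$. I expect the only delicate point to be the justification that the stochastic integral has zero $\IPhat^{x,T}$-expectation despite $\delta Z$ being only locally square-integrable; this is handled by the domination argument above, which crucially relies on the polynomial growth bound $|Y'_t|\leq c_x(1+|X_t^x|^p)$ together with the uniform moment estimate of Proposition~\ref{prop:sup_fini}.
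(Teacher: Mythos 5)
Your proposal is correct and follows essentially the same route as the paper: linearise the difference of the two EBSDEs via the bounded process $\beta$, apply Girsanov, and bound $\left|\lambda'-\lambdabar\right|$ by $C_x/T$ using the polynomial growth of $Y'$ and $\vbar$ together with Proposition \ref{prop:sup_fini}, then let $T\rightarrow\infty$. The only difference is that you spell out the domination argument showing the stochastic integral is a true martingale under the new measure, a point the paper leaves implicit, and this is a valid and welcome justification.
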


\begin{proof}
We define $\lambdatilde = \lambda' - \lambdabar$, $\Ytilde = Y' - \Ybar$ and $\Ztilde = Z' - \Zbar$.
We have:
\[ \lambdatilde = \frac{\Ytilde_T - \Ytilde_0}{T} + \frac{1}{T} \int_0^T \left[\psi\left(X_t^x, Z'_t\right) - \psi\left(X_t^x, \Zbar_t^x\right)\right] \, \drm t - \frac{1}{T} \int_0^T \Ztilde_t \, \drm W_t.\]
We denote:
\[ \gamma_t^x = \left\{ \begin{array}{ll}
  \dfrac{\psi\left(X_t^x, Z'_t\right) - \psi\left(X_t^x, \Zbar_t^x\right)}{\left|\Ztilde_t\right|^2} \Ztilde_t, & \text{if } \Ztilde_t \neq 0, \\ 
  0, & \text{otherwise.} 
  \end{array} \right. \]
There exists a probability $\IPtilde^{x,T}$ under which $\dst \Wtilde_t^x = W_t - \int_0^t \gamma_s^{x *} \, \drm s$ is a Brownian motion on $[0,T]$. For every $\delta > 0$:
\[ \lambdatilde = \frac{1}{T} \IEtilde^{x,T}\left[\Ytilde_T - \Ytilde_0\right] \leq \frac{1}{T} \left\{c_x \left(2+|x|^p + \IEtilde^{x,T}\left[\left|X_T^x\right|^p\right]\right) + C \left(2+|x|^2 + \IEtilde^{x,T}\left[\left|X_T^x\right|^2\right]\right)\right\}.\]
We conclude by taking the limit $T \rightarrow \infty$ and using Proposition \ref{prop:sup_fini}.
\end{proof}

\begin{thm}[Uniqueness of the functions $v$ and $\zeta$]
Let $p > 0$; we suppose that $\sqrt{r_2} K_z \left\|\sigma^{-1}\right\|_\infty + [(p \vee 2)-1] \dfrac{r_2}{2} < \eta_2$ and that Assumption \ref{ass_EBSDE} holds true.\\
Let $(v,\zeta)$ and $(\vtilde,\zetatilde)$ be two couples of functions with:
\begin{itemize}
\item $v,\vtilde : \IR^d \rightarrow \IR$ are continuous, $|v(x)| \leq C\left(1+|x|^p\right)$, $|\vtilde(x)| \leq C \left(1+|x|^p\right)$ and $v(0) = \vtilde(0) = 0$;
\item $\zeta, \zetatilde : \IR^d \rightarrow \left(\IR^d\right)^*$ are measurable.
\end{itemize}
We also assume that for some constants $\lambda$, $\lambdatilde$, and for all $x \in \IR^d$, the triplets $\left(v\left(X_t^x\right), \zeta\left(X_t^x\right), \lambda\right)$ and $\left(\vtilde\left(X_t^x\right),\zetatilde\left(X_t^x\right), \lambdatilde\right)$ verify the EBSDE (\ref{eq:EBSDE}).\\
Then $\lambda = \lambdatilde$, $v = \vtilde$ and $\zeta\left(X_t^x\right) = \zetatilde\left(X_t^x\right)$ $\IP$-a.s. and for a.e. $t \geq 0$ and for all $x \in \IR^d$.
\label{thm:uniq_EBSDE}
\end{thm}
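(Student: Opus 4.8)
The plan is to linearise the EBSDE along the difference of the two solutions, turn the resulting identity into a Feynman--Kac formula for a Girsanov--perturbed diffusion, and conclude with the ergodic estimate of Corollary~\ref{cor:semigroup}. First, the two ergodic constants coincide: for fixed $x\in\IR^d$, the triplet $\left(v(X_t^x),\zeta(X_t^x),\lambda\right)$ satisfies the hypotheses of Theorem~\ref{thm:lambda_uniq}, since $t\mapsto v(X_t^x)$ is continuous, $\left|v(X_t^x)\right|\le C(1+|X_t^x|^p)$, and the standing assumption $\sqrt{r_2}K_z\|\sigma^{-1}\|_\infty+[(p\vee2)-1]\tfrac{r_2}{2}<\eta_2$ is precisely the one required there; hence $\lambda=\lambdabar$, and likewise $\lambdatilde=\lambdabar$, so $\lambda=\lambdatilde$.

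Next I would set $\phi:=v-\vtilde$ (continuous, $\phi(0)=0$, $|\phi(x)|\le 2C(1+|x|^p)$), $\Ytilde_t^x:=\phi(X_t^x)$ and $\Ztilde_t^x:=\zeta(X_t^x)-\zetatilde(X_t^x)$, and subtract the two copies of~(\ref{eq:EBSDE}); using $\lambda=\lambdatilde$ this gives
\[\Ytilde_t^x=\Ytilde_T^x+\int_t^T\!\left[\psi\!\left(X_s^x,\zeta(X_s^x)\right)-\psi\!\left(X_s^x,\zetatilde(X_s^x)\right)\right]\drm s-\int_t^T\!\Ztilde_s^x\,\drm W_s.\]
Introducing the bounded measurable field $\Gamma(x)=\dfrac{\psi(x,\zeta(x))-\psi(x,\zetatilde(x))}{|\zeta(x)-\zetatilde(x)|^2}\,(\zeta(x)-\zetatilde(x))$ (and $\Gamma(x)=0$ where $\zeta(x)=\zetatilde(x)$), the twisted Lipschitz bound on $\psi$ yields $\|\Gamma\|_\infty\le K_z\|\sigma^{-1}\|_\infty$ and the integrand above equals $\Ztilde_s^x\,\Gamma(X_s^x)^*$. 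By Girsanov, $\Wtilde_t^x:=W_t-\int_0^t\Gamma(X_s^x)^*\,\drm s$ is a Brownian motion under some $\IPtilde^{x,T}$ on $[0,T]$, under which $X^x$ solves the time-homogeneous SDE with drift $\Xi+\sigma\Gamma^*$ and diffusion $\sigma$; its law is uniquely determined (Girsanov from~(\ref{eq:SDE}), which is unique in law by Theorem~\ref{thm:solution_SDE}) and defines a Markov semigroup $\widehat{\Pcal}$. Since $\Ytilde_t^x=\Ytilde_0^x+\int_0^t\Ztilde_s^x\,\drm\Wtilde_s^x$, and $\IEtilde^{x,T}\!\left[|X^x|^{*,p}_{0,T}\right]<\infty$ by Proposition~\ref{majo_X_Sp} applied to this (still linearly growing) SDE, a routine localisation turns this into a true martingale, so that $\phi(x)=\Ytilde_0^x=\IEtilde^{x,T}\!\left[\phi(X_T^x)\right]=\widehat{\Pcal}_T[\phi](x)$; in particular, taking $x=0$, $\widehat{\Pcal}_T[\phi](0)=\phi(0)=0$.

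The hard part is the ergodic estimate for $\widehat{\Pcal}$. Using weak dissipativity of $\Xi$ together with $\|\Gamma\|_\infty\le K_z\|\sigma^{-1}\|_\infty$, one checks that the drift $\Xi+\sigma\Gamma^*$ is weakly dissipative with inward rate $\eta_2-\sqrt{r_2}K_z\|\sigma^{-1}\|_\infty$ up to an arbitrarily small loss, so that, with $\mu:=p\vee2$, the standing assumption is exactly the condition $\sqrt{r_2}K_z\|\sigma^{-1}\|_\infty+\tfrac{\mu-1}{2}r_2<\eta_2$ needed to run Theorem~\ref{thm:coupling_estimate}/Corollary~\ref{cor:semigroup} with polynomial weight $|\cdot|^\mu$. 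The obstacle is that $\Gamma$ is only bounded and measurable while $\sigma\Gamma^*$ grows linearly, so neither result applies verbatim; I would reuse the regularisation from the proof of Proposition~\ref{prop:valpha_quadratic}, approximating $\sigma$ by bounded $\Ccal^1$ maps $\sigma^\varepsilon$ (with $|\sigma^\varepsilon|_F^2\le r_1+r_2|\cdot|^2$ and $\|(\sigma^\varepsilon)^{-1}\|_\infty$ uniform), $\Xi$ by $\Ccal^1$ uniformly weakly dissipative maps, and $\Gamma$ by $\Ccal^1$ fields bounded by $K_z\|\sigma^{-1}\|_\infty$ converging to it Lebesgue-a.e.\ (Lemma~\ref{lem:approx_lipschitz}). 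Then $\sigma^\varepsilon(\Gamma^\varepsilon)^*$ is bounded, Corollary~\ref{cor:semigroup} applies to the approximate semigroups, and — exactly as in the proof of Proposition~\ref{prop:valpha_quadratic} — the resulting $\chat,\nuhat$ can be taken independent of the approximation, depending only on $\eta_1,\eta_2,r_1,r_2,\mu$ and $K_z\|\sigma^{-1}\|_\infty$ (the dissipativity bound above uses only $\|\Gamma^\varepsilon\|_\infty$, not any bound on $\sigma^\varepsilon(\Gamma^\varepsilon)^*$); passing to the limit then yields $\bigl|\widehat{\Pcal}_T[\phi](x)-\widehat{\Pcal}_T[\phi](y)\bigr|\le \chat\,c_\phi\left(1+|x|^\mu+|y|^\mu\right)\erm^{-\nuhat T}$, with $c_\phi$ a fixed multiple of $C$. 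Taking $y=0$ and $T\to\infty$ in the identity of the previous paragraph forces $\phi\equiv0$, that is, $v=\vtilde$.

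Finally, with $v=\vtilde$ and $\lambda=\lambdatilde$ the displayed identity reduces to $\int_0^t\Ztilde_s^x\,\drm\Wtilde_s^x=0$ for all $t\in[0,T]$, and taking the quadratic variation under $\IPtilde^{x,T}$ gives $\int_0^T|\Ztilde_s^x|^2\,\drm s=0$ almost surely; hence $\zeta(X_t^x)=\zetatilde(X_t^x)$ $\IP$-a.s.\ for a.e.\ $t\ge0$ and every $x\in\IR^d$, which finishes the argument. The single genuinely delicate step is the ergodic contraction for $\widehat{\Pcal}$: because the Girsanov drift carries the linearly growing, merely measurable term $\sigma\Gamma^*$, one cannot invoke Theorem~\ref{thm:coupling_estimate} directly and must go through the smoothing-and-limit scheme of Proposition~\ref{prop:valpha_quadratic}.
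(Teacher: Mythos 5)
Your opening and closing steps coincide with the paper's: $\lambda=\lambdatilde$ via Theorem \ref{thm:lambda_uniq}, and, once $v=\vtilde$, the quadratic-variation argument for $\zeta$ is the paper's appeal to It\^o's formula. The gap is exactly at the step you yourself flag as delicate, and the fix you propose does not go through with the tools available. After linearising the original EBSDE, you need the exponential coupling estimate for the semigroup of $\drm U=\left[\Xi(U)+\sigma(U)\Gamma(U)^*\right]\drm t+\sigma(U)\,\drm W$, where $\Gamma$ is built from the merely measurable $\zeta,\zetatilde$. Corollary \ref{cor:semigroup} does not cover such a perturbation: Assumption \ref{ass_SDE_nonLip} requires the bounded perturbation to be the \emph{pointwise} limit of $\Ccal^1$ functions with bounded derivatives (its proof passes to the limit in the Girsanov densities through this pointwise convergence), and a bounded measurable field is in general not such a limit. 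Lemma \ref{lem:approx_lipschitz}, which you invoke to produce the approximating $\Ccal^1$ fields, explicitly assumes that $\zeta(t,\cdot)$ and $\zeta'(t,\cdot)$ are continuous — precisely the property you do not have here. Replacing it by mollification only yields Lebesgue-a.e.\ convergence, and then both the uniform contraction for the approximate semigroups and, more importantly, the identification of their limit with your $\widehat{\Pcal}_T[\phi](x)=\IEtilde^{x,T}\left[\phi\left(X_T^x\right)\right]$ require additional arguments (absolute continuity of the marginal laws of the reference process, convergence of the Girsanov densities along a.e.-convergent fields) that are neither in the paper nor supplied in your sketch. So, as written, the central inequality $\left|\widehat{\Pcal}_T[\phi](x)-\widehat{\Pcal}_T[\phi](y)\right|\leq \chat\, c_\phi\left(1+|x|^\mu+|y|^\mu\right)\erm^{-\nuhat T}$ is unjustified.

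This is precisely why the paper does not linearise the original EBSDE. It approximates $\psi$, $v$, $\vtilde$ by $\Ccal^1$ functions with bounded derivatives, solves the finite-horizon BSDEs with terminal data $v^\varepsilon\left(X_T^x\right)$, $\vtilde^\varepsilon\left(X_T^x\right)$ and driver $\psi^\varepsilon-\lambda$, uses Theorem 3.1 of \cite{MZ02} to represent their $Z$-components as \emph{continuous} functions $\zeta_T^\varepsilon\left(t,X_t^x\right)$, $\zetatilde_T^\varepsilon\left(t,X_t^x\right)$, and only then linearises: the resulting field $\Gamma_T^\varepsilon$ is admissible for Lemma \ref{lem:approx_lipschitz} and hence for Corollary \ref{cor:semigroup}, with constants uniform in $\varepsilon$ because the polynomial growth of $v^\varepsilon-\vtilde^\varepsilon$ and the bound $K_z\left\|\sigma^{-1}\right\|_\infty$ are. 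The price is an extra stability estimate showing $\left|\Delta Y_0^x-\Delta Y_0^{\varepsilon,x}\right|\rightarrow 0$, which the paper gets from standard BSDE a priori bounds. If you want to keep your more direct route, you would first have to prove a version of Corollary \ref{cor:semigroup} valid for bounded, merely measurable Girsanov perturbations; otherwise you should reroute your argument through the paper's smooth finite-horizon approximation.
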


\begin{proof}~\\
By Theorem \ref{thm:lambda_uniq}, we already know that $\lambda = \lambdatilde$. We denote $Y_t^x = v\left(X_t^x\right)$, $Z_t^x = \zeta\left(X_t^x\right)$, $\Ytilde_t^x = \vtilde\left(X_t^x\right)$ and $\Ztilde_t^x = \zetatilde\left(X_t^x\right)$. 
We approximate the functions $\psi$, $v$ and $\vtilde$ by sequences $\left(\psi^\varepsilon\right)$, $\left(v^\varepsilon\right)$ and $\left(\vtilde^\varepsilon\right)$ of $\Ccal^1$ functions with bounded derivatives that converge uniformly. We can say that for all $\varepsilon > 0$, the bound on the derivatives of $\psi^\varepsilon$ is independent of $\varepsilon$ and $v^\varepsilon$, $\vtilde^\varepsilon$ have polynomial growth (with constant $C$ and exponent $p$, all independent of $\varepsilon$). In the following, $x \in \IR^d$, $T > 0$ and $\varepsilon > 0$ are fixed.
Let us consider the following BSDEs in finite time horizon:
\[ \left\{ \begin{array}{l}
   \drm Y_t^{\varepsilon, x} = - \left[\psi^\varepsilon\left(X_t^x, Z_t^{\varepsilon, x}\right)-\lambda\right] \, \drm t + Z_t^{\varepsilon,x} \, \drm W_t, \\
   Y_T^{\varepsilon, x} = v^\varepsilon\left(X_T^x\right), \end{array} \right.
   \text{ and }
   \left\{ \begin{array}{l}
   \drm \Ytilde_t^{\varepsilon, x} = - \left[\psi^\varepsilon\left(X_t^x, \Ztilde_t^{\varepsilon, x}\right)-\lambda\right] \, \drm t + \Ztilde_t^{\varepsilon,x} \, \drm W_t, \\
   \Ytilde_T^{\varepsilon, x} = \vtilde^\varepsilon\left(X_T^x\right). \end{array} \right.\]
We denote $\Delta Y_t^x = Y_t^x - \Ytilde_t^x$, $\Delta Z_t^x = Z_t^x - \Ztilde_t^x$, $\Delta Y_t^{\varepsilon,x} = Y_t^{\varepsilon,x} - \Ytilde_t^{\varepsilon,x}$, $\Delta Z_t^{\varepsilon,x} = Z_t^{\varepsilon,x} - \Ztilde_t^{\varepsilon,x}$. This way, we get:
\[
\Delta Y_0^x - \IE\left[\Delta Y_T^x\right] = \IE\left[\int_0^T \left[\psi\left(X_t^x, Z_t^x\right) - \psi\left(X_t^x, \Ztilde_t^x\right)\right] \, \drm t\right] \text{ and }\] 
\[\Delta Y_0^{\varepsilon,x} - \IE\left[\Delta Y_T^{\varepsilon,x}\right] = \IE\left[\int_0^T \left[\psi^\varepsilon\left(X_t^x, Z_t^{\varepsilon,x}\right) - \psi^\varepsilon\left(X_t^x, \Ztilde_t^{\varepsilon,x}\right)\right] \, \drm t\right].
\]
By substraction, it leads us to:
\begin{align*}
\left| \Delta Y_0^x - \Delta Y_0^{\varepsilon,x} \right| 
  &\leq \IE\left[\int_0^T \left|\psi\left(X_t^x, Z_t^x\right) - \psi^\varepsilon\left(X_t^x, Z_t^{\varepsilon,x}\right)\right| \, \drm t\right] + \IE\left[\int_0^T \left|\psi\left(X_t^x, \Ztilde_t^x\right) - \psi^\varepsilon\left(X_t^x, \Ztilde_t^{\varepsilon,x}\right)\right| \, \drm t\right] \\
  &\pushright{+ \left\|v-v^\varepsilon\right\|_\infty + \left\|\vtilde-\vtilde^\varepsilon\right\|_\infty.}
\end{align*}
Set $\delta Y_t^{\varepsilon,x} = Y_t^x - Y_t^{\varepsilon,x}$ and $\delta Z_t^{\varepsilon,x} = Z_t^x - Z_t^{\varepsilon,x}$. We have:
$\dst \left|\psi\left(X_t^x, Z_t^x\right) - \psi^\varepsilon\left(X_t^x, Z_t^{\varepsilon,x}\right)\right| \leq K_z \left\|\sigma^{-1}\right\|_\infty \left|\delta Z_t^{\varepsilon,x}\right| + \left\|\psi-\psi^\varepsilon\right\|_\infty$.\linebreak
Our next goal is to estimate $\dst \IE\left[\int_0 ^T \left|\delta Z_t^{\varepsilon,x}\right| \, \drm t\right]$. We see that $\left( \delta Y_t^{\varepsilon,x}, \delta Z_t^{\varepsilon,x}\right)$ is solution of the BSDE:
\[ \left\{ \begin{array}{l}
   \drm \delta Y_t^{\varepsilon,x} = -\left[ \psi\left(X_t^x, Z_t^x\right) - \psi^\varepsilon\left(X_t^x, Z_t^{\varepsilon,x}\right)\right] \, \drm t + \delta Z_t^{\varepsilon,x} \, \drm W_t, \\
   \delta Y_T^{\varepsilon,x} = \left(v-v^\varepsilon\right)\left(X_T^x\right).
   \end{array} \right.\]  
Using Lemma 2.1 of \cite{MZ02}, there exists $C > 0$ only depending on $T$ and the Lipschitz constant of $\psi$, such that:
\[ \IE\left[\int_0^T \left|\delta Z_t^{\varepsilon,x}\right|^2 \, \drm t\right] \leq C \left(T \left\|\psi-\psi^\varepsilon\right\|_\infty^2 + \left\|v-v^\varepsilon\right\|_\infty^2 \right).\]
By Cauchy-Schwarz, we finally get:
$\dst \IE\left[\int_0^T \left|\delta Z_t^{\varepsilon,x}\right| \, \drm t\right] \leq \sqrt{CT} \sqrt{T \left\|\psi-\psi^\varepsilon\right\|_\infty^2 + \left\|v-v^\varepsilon\right\|_\infty^2}$.
We can do the same for $\dst \IE\left[\int_0^T \left|\psi\left(X_t^x, \Ztilde_t^x\right) - \psi^\varepsilon\left(X_t^x, \Ztilde_t^{\varepsilon,x}\right)\right| \, \drm t\right]$, and we have:
$\dst \left|\Delta Y_0^x - \Delta Y_0^{\varepsilon,x}\right| \underset{\varepsilon \rightarrow 0}{\longrightarrow} 0$.
By Theorem 3.1 of \cite{MZ02}, because $\psi^\varepsilon$, $v^\varepsilon$ and $\vtilde^\varepsilon$ have bounded derivatives, there exists continuous functions $\zeta_T^\varepsilon$ and $\zetatilde_T^\varepsilon$ such that: $ Z_t^{\varepsilon,x} = \zeta_T^\varepsilon\left(t,X_t^x\right)$ and $\Ztilde_t^{\varepsilon,x} = \zetatilde_T^\varepsilon\left(t,X_t^x\right)$ $\IP$-a.s. and for a.e. $t \in [0,T]$. As usual, we linearise our BSDE; so we set:
\[ \Gamma_T^\varepsilon(t,x) = 
  \left\{ \begin{array}{ll} 
          \dfrac{\psi^\varepsilon\left(x,\zeta_T^\varepsilon(t,x)\right) - \psi^\varepsilon\left(x,\zetatilde_T^\varepsilon(t,x)\right)}{\left|\zeta_T^\varepsilon(t,x) - \zetatilde_T^\varepsilon(t,x)\right|^2} \left(\zeta_T^\varepsilon(t,x) - \zetatilde_T^\varepsilon(t,x)\right), & \text{if } \zeta_T^\varepsilon(t,x) \neq \zetatilde_T^\varepsilon(t,x), \\
          0, & \text{otherwise.}
          \end{array} \right.\]
The process $\Gamma_T^\varepsilon\left(t,X_t^x\right)^*$ is bounded by $K_z \left\|\sigma^{-1}\right\|_\infty$; by Girsanov, $\dst W_t^{\varepsilon,x,T} = W_t - \int_0^t \Gamma_T^\varepsilon\left(s,X_s^x\right)^* \, \drm s$ is a Brownian motion on $[0,T]$ under the probability $\IQ_T^{\varepsilon,x}$. $\Delta Y^{\varepsilon, x}$ is a $\IQ_T^{\varepsilon,x}$-martingale and we get:
\[ \Delta Y_0^{\varepsilon,x} = \IE^{\IQ_T^{\varepsilon,x}}\left[\Delta Y_T^{\varepsilon,x}\right] = \IE^{\IQ_T^{\varepsilon,x}}\left[\left(v^\varepsilon - \vtilde^\varepsilon\right)\left(X_T^x\right)\right] = \Pcal^\varepsilon_T\left[v^\varepsilon - \vtilde^\varepsilon\right](x),\]
where $\Pcal^\varepsilon_T$ is the Kolmogorov semigroup of the SDE:
\[ \drm U_t^x = \left[\Xi\left(U_t^x\right) + \sigma\left(U_t^x\right) \Gamma_T^\varepsilon\left(t, U_t^x\right)^*\right] \, \drm t + \sigma\left(U_t^x\right) \, \drm W_t.\]
Using Lemma \ref{lem:approx_lipschitz} and Corollary \ref{cor:semigroup}, we get:
$\dst \left|\Pcal^\varepsilon_T\left[v^\varepsilon-\vtilde^\varepsilon\right](x) - \Pcal^\varepsilon_T\left[v^\varepsilon-\vtilde^\varepsilon\right](0)\right| \leq C\left(1+|x|^{p \vee 2} \right) \erm^{-\nu T}$, where $\nu$ and $C$ are independent of $\varepsilon$ (because the polynomial growths of $v^\varepsilon$ and $\vtilde^\varepsilon$ do not depend on $\varepsilon$).
Finally,
\[ \forall \varepsilon > 0,\, \forall T >0,\, \forall x \in \IR^d,\, \left|\Delta Y_0^{\varepsilon,x} - \Delta Y_0^{\varepsilon,0}\right| \leq C \left(1+|x|^{p \vee 2}\right) \erm^{-\nu T}.\]
By taking the limit as $\varepsilon$ goes to $0$:
\[ \forall T > 0,\, \forall x \in \IR^d,\, \left|\left(v-\vtilde\right)(x)\right| = \left|\Delta Y_0^x - \Delta Y_0^0\right| \leq C \left(1+|x|^{p \vee 2}\right) \erm^{-\nu T}.\]
Taking the limit as $T$ goes to infinity leads us to $v = \vtilde$. Then, uniqueness of $\zeta$ is the consequence of Itô's formula.
\end{proof}

\section{Large time behaviour}

In this section, we always suppose that $\sqrt{r_2} K_z \left\|\sigma^{-1}\right\|_\infty + \dfrac{\mu-1}{2}r_2 < \eta_2$, with $\mu \geq 2$, and we keep working under Assumption \ref{ass_EBSDE}. Indeed, we have seen in the previous section that, under this assumption, there exists a unique triplet $(v,\zeta,\lambda)$ such that $\left(v\left(X_t^x\right), \zeta\left(X_t^x\right), \lambda\right)$ is a solution to the EBSDE (\ref{eq:EBSDE}), $v$ is continuous with quadratic growth, $v(0)=0$ and $\zeta$ is measurable. Let $\xi^T$ be a real random variable $\Fcal_T$-measurable and such that $\left|\xi^T\right| \leq C \left(1+\left|X_T^x\right|^\mu\right)$. It will allow us to use proposition \ref{prop:sup_fini}. We denote by $\left(Y_t^{T,x}, Z_t^{T,x}\right)$ the solution of the BSDE in finite horizon:
\[ Y_t^{T,x} = \xi^T + \int_t^T \psi\left(X_s^x, Z_s^{T,x}\right) \, \drm s - \int_t^T Z_s^{T,x} \, \drm W_s.\]

\begin{thm}
We have the following inequality:
\[ \left|\frac{Y_0^{T,x}}{T}-\lambda\right| \leq \frac{C\left(1+|x|^\mu\right)}{T},\]
where the constant $C$ is independent of $x$ and $T$; and in particular:
$\dst \frac{Y_0^{T,x}}{T} \underset{T \rightarrow \infty}{\longrightarrow} \lambda$, uniformly in any bounded subset of $\IR^d$.
\label{thm:1st_behav}
\end{thm}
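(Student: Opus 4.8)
The plan is to compare $(Y^{T,x},Z^{T,x})$ with the ergodic solution and to linearise the driver in $z$, exactly in the spirit of Theorem~\ref{thm:lambda_uniq}. Let $(v,\zeta,\lambda)$ be the triplet provided by Theorems~\ref{thm:existence_EBSDE} and~\ref{thm:uniq_EBSDE}, so that $Y_t^x := v(X_t^x)$, $Z_t^x := \zeta(X_t^x)$ solve the EBSDE~(\ref{eq:EBSDE}). Put $\bar Y_t := Y_t^{T,x} - Y_t^x$ and $\bar Z_t := Z_t^{T,x} - Z_t^x$. Subtracting the two backward equations on $[0,T]$ gives
\[ \bar Y_t = \bar Y_T + \int_t^T \left[\psi\!\left(X_s^x, Z_s^{T,x}\right) - \psi\!\left(X_s^x, Z_s^x\right) + \lambda\right] \drm s - \int_t^T \bar Z_s \, \drm W_s . \]

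Next I would linearise. Since $\psi$ is Lipschitz in $z$ with constant $K_z\|\sigma^{-1}\|_\infty$ (cf.\ the remark after Assumption~\ref{ass_EBSDE}), the process
\[ \gamma_t^x := \left\{ \begin{array}{ll} \dfrac{\psi\!\left(X_t^x, Z_t^{T,x}\right) - \psi\!\left(X_t^x, Z_t^x\right)}{\left|\bar Z_t\right|^2}\,\bar Z_t, & \text{if } \bar Z_t \neq 0, \\ 0, & \text{otherwise,} \end{array} \right. \]
is progressively measurable, satisfies $\left|\gamma_t^x\right| \leq K_z\|\sigma^{-1}\|_\infty$ and $\psi(X_t^x, Z_t^{T,x}) - \psi(X_t^x, Z_t^x) = \bar Z_t\,\gamma_t^{x\,*}$. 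As $\gamma^x$ is bounded, Girsanov's theorem supplies a probability $\IPtilde^{x,T}$ under which $\Wtilde_t^x := W_t - \int_0^t \gamma_s^{x\,*}\,\drm s$ is a Brownian motion on $[0,T]$, and the equation becomes $\bar Y_t = \bar Y_T + \lambda(T-t) - \int_t^T \bar Z_s\,\drm\Wtilde_s^x$. Taking $\IEtilde^{x,T}$-expectation at $t=0$ — after the usual localisation by the exit times of $X^x$ from large balls, which justifies that the $\drm\Wtilde_s^x$-integral contributes nothing and uses the quadratic growth of $v$, the hypothesis on $\xi^T$, and the moment bounds of Propositions~\ref{majo_X_Sp} and~\ref{prop:sup_fini} — one obtains
\[ Y_0^{T,x} - \lambda T = v(x) + \IEtilde^{x,T}\!\left[\xi^T - v\!\left(X_T^x\right)\right] . \]

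It remains to estimate the right-hand side by $C(1+|x|^\mu)$. Since $\mu \geq 2$ and $v$ has quadratic growth, $|v(x)| \leq C(1+|x|^2) \leq C(1+|x|^\mu)$, and likewise $\left|\xi^T - v(X_T^x)\right| \leq C(1+|X_T^x|^\mu)$ by the standing bound $|\xi^T| \leq C(1+|X_T^x|^\mu)$. Finally $\|\gamma^x\|_\infty \leq K_z\|\sigma^{-1}\|_\infty$, so the running assumption $\sqrt{r_2}\,K_z\|\sigma^{-1}\|_\infty + \tfrac{\mu-1}{2}\,r_2 < \eta_2$ of this section is precisely the condition needed to apply Proposition~\ref{prop:sup_fini} with $p=\mu$ and $\gamma=\gamma^x$; it yields $\sup_{T\geq 0}\IEtilde^{x,T}\!\left[|X_T^x|^\mu\right] \leq C(1+|x|^\mu)$ with $C$ independent of $T$ and $x$. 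Combining the three bounds gives $\left|Y_0^{T,x} - \lambda T\right| \leq C(1+|x|^\mu)$, and dividing by $T$ is the desired inequality; uniform convergence of $Y_0^{T,x}/T$ to $\lambda$ on any bounded subset of $\IR^d$ is then immediate.

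The one point needing some care is the legitimacy of the Girsanov change of measure and the fact that $\int_0^\cdot \bar Z_s\,\drm\Wtilde_s^x$ is a true $\IPtilde^{x,T}$-martingale vanishing in expectation at time $T$: the former is clear because $\gamma^x$ is bounded, and the latter is handled by the standard stopping-time argument together with the uniform moment control of Proposition~\ref{prop:sup_fini}. Apart from that the proof is essentially bookkeeping, and it is pleasant that the hypothesis maintained throughout the section is exactly the one under which the decisive estimate is available.
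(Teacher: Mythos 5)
Your proposal is correct and follows essentially the same route as the paper: decompose $Y_0^{T,x}-\lambda T$ via the ergodic solution $v(X^x)$, linearise the difference of drivers in $z$, apply Girsanov with the bounded process $\gamma^x$, and control $\IEtilde^{x,T}\left[\left|X_T^x\right|^\mu\right]$ uniformly in $T$ by Proposition \ref{prop:sup_fini} under the standing condition $\sqrt{r_2}K_z\left\|\sigma^{-1}\right\|_\infty+\frac{\mu-1}{2}r_2<\eta_2$. The only difference is that you spell out the localisation ensuring the stochastic integral has zero expectation under the new measure, which the paper leaves implicit.
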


\begin{proof}~\\
For all $x \in \IR^d$ and $T > 0$, we write:
\[ \left|\frac{Y_0^{T,x}}{T}-\lambda\right| \leq \left| \frac{Y_0^{T,x} - Y_0^x - \lambda T}{T} \right| + \left|\frac{Y_0^x}{T}\right|.\]
First of all, $\left|Y_0^x\right| = |v(x)| \leq C \left(1+|x|^2\right)$. Also, by the usual linearisation technique, we have:
\[ Y_0^{T,x} - Y_0^x - \lambda T = \xi^T - v\left(X_T^x\right) + \int_0^T \left(Z_s^{T,x} - Z_s^x\right) \beta_s^{T *} \, \drm s - \int_0^T \left(Z_s^{T,x} - Z_s^x\right) \, \drm W_s,\]
where
\[ \beta_t^T = \left\{ \begin{array}{ll}
   \dfrac{\psi\left(X_t^x, Z_t^{T,x}\right)-\psi\left(X_t^x, Z_t^x\right)}{\left|Z_t^{T,x} - Z_t^x\right|^2} \left(Z_t^{T,x} - Z_t^x\right), & \text{if } Z_t^{T,x} \neq Z_t^x, \\
   0, & \text{otherwise.}
   \end{array} \right.\]
The process $\beta^T$ is bounded by $K_z \left\|\sigma^{-1}\right\|_\infty$ and by Girsanov's theorem, there exists a probability measure $\IQ^T$ under which $\dst \Wtilde_t^T = W_t - \int_0^t \beta_s^{T *} \, \drm s$ is a Brownian motion on $[0,T]$. This way, we can see that:
\[ \left|Y_0^{T,x} - Y_0^x - \lambda T\right| = \left| \IE^{\IQ^T}\left[ \xi^T - v\left(X_T^x\right) \right]\right| \leq \IE^{\IQ^T}\left[\left|\xi^T\right|\right] + \IE^{\IQ^T}\left[\left|v\left(X_T^x\right)\right|\right] \leq C \left(1+ \IE^{\IQ^T}\left[\left|X_T^x\right|^\mu\right]\right).\]
Thanks to Proposition \ref{prop:sup_fini}, we get:
$\dst \sup_{T \geq 0} \IE^{\IQ^T}\left[\left|X_T^x\right|^\mu\right] \leq \kappa\left( 1 +|x|^\mu\right)$, where $\kappa$ is independent of $x$.
\end{proof}

\begin{thm}
We suppose that $\xi^T = g\left(X_T^x\right)$, where $g : \IR^d \rightarrow \IR$ has polynomial growth: $\forall x \in \IR^d,\, |g(x)| \leq C\left(1+|x|^\mu\right)$ and satisfies: $\forall x,x' \in \IR^d,\, |g(x)-g(x')| \leq C\left(1+|x|^\mu+|x'|^\mu\right) |x-x'|$.\\
Then, there exists $L \in \IR$, such that:
$ \dst \forall x \in \IR^d,\, Y_0^{T,x} - \lambda T - Y_0^x \underset{T \rightarrow \infty}{\longrightarrow} L$.
Furthermore,
\[ \forall x \in \IR^d,\, \forall T > 0,\, \left|Y_0^{T,x} - \lambda T - Y_0^x - L\right| \leq C \left(1+|x|^{\mu}\right) \erm^{- \nu T}.\]
\label{thm:LTB}
\end{thm}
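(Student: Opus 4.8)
The plan is to adapt the argument of \cite{PDE}, using the computational template from the proof of Theorem \ref{thm:1st_behav}. Write $D^{T,x}:=Y_0^{T,x}-\lambda T-Y_0^x=Y_0^{T,x}-\lambda T-v(x)$; we must show $D^{T,x}\to L$ with an exponential rate. The first step is a representation formula, already implicit in the proof of Theorem \ref{thm:1st_behav}: fix $S\ge 0$ and work on $[0,T]$ with the BSDE of horizon $S+T$. Setting $W_t:=Y_t^{S+T,x}-v(X_t^x)-\lambda(S+T-t)$ and subtracting the dynamics of the EBSDE solution $Y_t^x=v(X_t^x)$ from those of $Y_t^{S+T,x}$, the $\lambda$ terms cancel and one gets
\[
\drm W_t=-\bigl[\psi(X_t^x,Z_t^{S+T,x})-\psi(X_t^x,Z_t^x)\bigr]\,\drm t+(Z_t^{S+T,x}-Z_t^x)\,\drm W_t .
\]
Linearising $\psi$ in its second variable exactly as in Theorem \ref{thm:1st_behav} yields a process $\beta$ with $|\beta|\le K_z\|\sigma^{-1}\|_\infty$ and, by Girsanov, a probability $\IQ$ (depending on $S,T,x$) under which $\Wtilde_t=W_t-\int_0^t\beta_s^*\,\drm s$ is a Brownian motion on $[0,T]$ and $\drm X_t^x=[\Xi(X_t^x)+\sigma(X_t^x)\beta_t^*]\,\drm t+\sigma(X_t^x)\,\drm\Wtilde_t$. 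Since $W$ is a $\IQ$–martingale (its terminal value grows polynomially in $X_T^x$, which has all moments under $\IQ$ by Proposition \ref{prop:sup_fini}), and since by the flow property of the time–homogeneous forward BSDE (as in Lemma 4.4 of \cite{Royer04}) one has $Y_T^{S+T,x}=\Phi_S(X_T^x)$ with $\Phi_S(y):=Y_0^{S,y}$, it follows that
\[
D^{S+T,x}=\IE^{\IQ}\bigl[W_T\bigr]=\IE^{\IQ}\bigl[D^{S,X_T^x}\bigr],
\]
and in particular, taking $S=0$, $D^{T,x}=\IE^{\IQ}\bigl[(g-v)(X_T^x)\bigr]$.

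The second step makes the coupling estimate usable. As in Proposition \ref{prop:valpha_quadratic}, replace $\psi,g,v$ (and $\Xi,\sigma$) by $\Ccal^1$ approximations with bounded derivatives and uniformly controlled polynomial growth; for the approximate problems the BSDE solutions are Markovian, so $Z_t^{S+T,x}=\zeta^{S+T}(t,X_t^x)$ and $Z_t^x=\zeta(X_t^x)$ for deterministic functions independent of the starting point, whence $\beta_t=\Gamma^{S+T}(t,X_t^x)$ for a deterministic, starting–point–independent $\Gamma^{S+T}$ bounded by $K_z\|\sigma^{-1}\|_\infty$. Then the forward process under $\IQ$ is a genuine (time–inhomogeneous) Markov process whose drift equals $\xi$ plus a term of linear growth with coefficient $\le\sqrt{r_2}K_z\|\sigma^{-1}\|_\infty$; since $\sqrt{r_2}K_z\|\sigma^{-1}\|_\infty+\frac{\mu-1}{2}r_2<\eta_2$, the Lyapunov computation underlying Theorem \ref{thm:coupling_estimate} (run as in Proposition \ref{prop:valpha_quadratic}, after smoothing $\Gamma^{S+T}$ by Lemma \ref{lem:approx_lipschitz}) gives the coupling inequality with constants $\chat,\nuhat$ depending only on $\eta_1,\eta_2,r_1,r_2,\mu,K_z,\|\sigma^{-1}\|_\infty$ — uniform in $S,T$ and in the approximation parameter. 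Passing to the limit in the approximation (as at the end of the proof of Proposition \ref{prop:valpha_quadratic}) transfers everything to the original data.

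The third step consists of two quantitative bounds. First, from the $S=0$ representation, $D^{T,x}$ and $D^{T,0}$ are the values at the starting points $x$ and $0$ of the same (smoothed) Markov semigroup applied to $g-v$, and $|(g-v)(z)|\le C(1+|z|^\mu)$ by the growth of $g$ and the quadratic growth of $v$; hence $|D^{T,x}-D^{T,0}|\le C(1+|x|^\mu)\erm^{-\nuhat T}$ for all $x,T$. Second, the representation with $x=0$ gives $D^{S+T,0}-D^{S,0}=\IE^{\IQ}\bigl[D^{S,X_T^0}-D^{S,0}\bigr]$; applying the first bound at horizon $S$ and then Proposition \ref{prop:sup_fini} (which gives $\sup_{T\ge0}\IE^{\IQ}[|X_T^0|^\mu]\le C$ under the standing assumption) yields $|D^{S+T,0}-D^{S,0}|\le C\erm^{-\nuhat S}$ uniformly in $T\ge0$. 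Therefore $(D^{T,0})_{T>0}$ is Cauchy; let $L$ be its limit, and let $T\to\infty$ to get $|D^{T,0}-L|\le C\erm^{-\nuhat T}$. Combining with the first bound, $|Y_0^{T,x}-\lambda T-Y_0^x-L|=|D^{T,x}-L|\le C(1+|x|^\mu)\erm^{-\nuhat T}$, which proves the theorem with $\nu=\nuhat$.

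The main obstacle is the second step. The coupling estimate (Theorem \ref{thm:coupling_estimate}, Corollary \ref{cor:semigroup}) is a statement about a Markov semigroup, whereas the Girsanov–changed forward process is a priori only adapted; one therefore has to pass through smooth approximations making $\beta$ a function of time and state, check that the effective dissipativity $\eta_2-\sqrt{r_2}K_z\|\sigma^{-1}\|_\infty-\cdots$ stays positive so that $\chat,\nuhat$ are uniform in $S$, $T$ and in the approximation, and finally let the approximation go. This is exactly the place where the local–Lipschitz–with–polynomial–growth hypotheses on $g$ and $\psi$, together with the machinery of Propositions \ref{prop:valpha_quadratic}–\ref{prop:valpha_locLipschitz} and of the appendix, are needed; once it is in place, the two bounds are short.
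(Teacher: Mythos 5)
Your proposal is correct in substance, and its second half takes a genuinely different route from the paper. The first half coincides with the paper's Steps 1--3 and 5: linearise the difference of the finite-horizon BSDE and the EBSDE, change measure by Girsanov, smooth $\psi$, $g$, $v$, $\Xi$ so that the linearising drift becomes a deterministic bounded function of $(t,x)$ (via the $\Ccal^1$ Markovian representation of the appendix), apply Corollary \ref{cor:semigroup} with constants uniform in the horizon and in the approximation, and pass to the limit; this is exactly how the paper obtains its estimate (\ref{eq:LTB_step3}) and then (\ref{eq:LTB_step5.2}), and, as you note, the limit transfer is the technically heavy part (the paper's Step 2 lemmas), which you defer to the existing machinery rather than re-prove — acceptable, since only a qualitative convergence with uniform growth constants is needed for your purposes. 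The divergence is in how $L$ is produced: the paper proves a gradient bound on $w_T^n(0,\bullet)$ through the Malliavin-type kernel $N^{n,t,x}$ (Step 4), deduces a uniform local Lipschitz estimate (Step 5), and then uses Ascoli compactness plus a uniqueness-of-accumulation-point argument (Steps 6--7) before extracting the rate (Step 8); you instead exploit the semigroup identity $D^{S+T,x}=\IE^{\IQ}\left[D^{S,X_T^x}\right]$ — which is the paper's Step 7 identity $w_{T+S}(0,x)=\Pcal_S\left[w_T(0,\bullet)\right](x)$ — combined with the spatial decay bound and the uniform moment estimate of Proposition \ref{prop:sup_fini} to show directly that $\left(D^{T,0}\right)_{T>0}$ is Cauchy with exponential rate, which yields $L$ and the rate of convergence in one stroke and makes the gradient/Ascoli machinery unnecessary. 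What each approach buys: yours is leaner and avoids the delicate Step 4 estimate entirely; the paper's route produces as a by-product the uniform local Lipschitz bound on $u_T(0,\bullet)-\lambda T-v$, which has independent interest, and its quantitative Step 2 error bounds are what make the $n\to\infty$ passage fully explicit. Two small imprecisions worth fixing in a write-up: for the smoothed problem the ``EBSDE part'' must be replaced by the finite-horizon BSDE with terminal datum $v^n$ (so its $Z$ is $\partial_x\utilde_T^n(t,X_t)\sigma(X_t)$, a function of $(t,x)$, not a time-independent $\zeta(X_t)$), and Proposition \ref{prop:sup_fini} is stated for Markovian drifts, so one should either invoke it after smoothing or note that its proof extends verbatim to bounded progressive drifts (as the paper itself implicitly does in Theorems \ref{thm:lambda_uniq} and \ref{thm:1st_behav}); neither point affects the validity of your argument.
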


\begin{proof}~\\
We will consider the following equations:
\[ \left\{ \begin{array}{rcl}
           \drm Y_s^{T,t,x} &=& -\psi\left(X_s^{t,x},Z_s^{T,t,x}\right) \, \drm s + Z_s^{T,t,x} \drm W_s \\
           Y_T^{T,t,x} &=& g\left(X_T^{t,x}\right)
           \end{array} \right. \longrightarrow Y_s^{T,t,x} = u_T\left(s,X_s^{t,x}\right) \text{, see Theorem 4.1 of \cite{EPQ97}}; \]
\[ \left\{ \begin{array}{rcl}
           \drm Y_s^{t,x} &=& -\left\{\psi\left(X_s^{t,x},Z_s^{t,x}\right) - \lambda\right\} \, \drm s + Z_s^{t,x} \drm W_s \\
           Y_T^{t,x} &=& v\left(X_T^{t,x}\right)
           \end{array} \right. \longrightarrow Y_s^{t,x} = v\left(X_s^{t,x}\right) \text{, solution of the EBSDE}; \]
\[ \left\{ \begin{array}{rcl}
           \drm Y_s^{n,t,x} &=& -\psi^n\left(X_s^{n,t,x},Z_s^{n,t,x}\right) \, \drm s + Z_s^{n,t,x} \drm W_s \\
           Y_T^{n,t,x} &=& g^n\left(X_T^{n,t,x}\right)
           \end{array} \right. \longrightarrow Y_s^{n,t,x} = u_T^n\left(s,X_s^{n,t,x}\right) \text{, where $u_T^n$ is $\Ccal^1$ (see appendix)}; \]
\[ \left\{ \begin{array}{rcl}
           \drm \Ytilde_s^{n,t,x} &=& -\psi^n\left(X_s^{n,t,x},\Ztilde_s^{n,t,x}\right) \, \drm s + \Ztilde_s^{n,t,x} \drm W_s \\
           \Ytilde_T^{n,t,x} &=& v^n\left(X_T^{n,t,x}\right)
           \end{array} \right. \longrightarrow \Ytilde_s^{n,t,x} = \utilde_T^n\left(s,X_s^{n,t,x}\right) \text{, where $\utilde_T^n$ is $\Ccal^1$ (see appendix)}; \]
where $g^n, v^n \in \Ccal^1_b$ converge uniformly on every compact towards $g$ and $v$; also, we take $\left(\Xi^n\right)_n$ a sequence of $\Ccal^1_b$ functions that converges uniformly towards $\Xi$ and define $X^{n,t,x}$ as the solution of the SDE
\[ \left\{ \begin{array}{rcl}
           \drm X^{n,t,x}_s &=& \Xi^n\left(X_s^{n,t,x}\right) \, \drm s + \sigma\left(X_s^{n,t,x}\right) \, \drm W_s, \\
           X_t^{n,t,x} &=& x. \end{array} \right. \] 
\begin{description}
\item[Step 1:] Approximation of the function $\psi$.\\
      We set $\rho(x,p) = \psi(x,p \sigma(x))$ ; $\rho$ is Lipschitz, and we approximate it by a sequence of $\Ccal^1_b$ functions $\left(\rho^n\right)$ which converges uniformly on $\IR^d \times \left(\IR^d\right)^*$. Then, we define a function $\beta^n$ satisfying:
      \[ \beta^n(x,p) = \left\{ \begin{array}{ll}
        \rho^n(x,p), & \text{if } |p\sigma(x)| \leq n, \\
        0,           & \text{if } |p \sigma(x)| \geq f(n), \end{array} \right.\]
      where $f(n)$ is chosen such that $\beta^n$ and $\rho^n$ have the same Lipschitz constant.
      This way, $\beta^n \underset{n \rightarrow \infty}{\longrightarrow} \rho$, uniformly on every compact, and $\beta^n \in \Ccal^1_b$. We set $\psi^n(x,z) = \beta^n\left(x,z\sigma(x)^{-1}\right)$ and $\psi^n \in \Ccal^1_b$. Moreover, we have the following:
      \[
      \left|\left(\psi-\psi^n\right)(x,z)\right| 
      \leq \left\|\rho-\rho^n\right\|_\infty + \left(2M_\psi(1+|x|) + 2 K_z \left\|\sigma^{-1}\right\|_\infty |z|\right) \ind_{|z|>n}.
      \]
\item[Step 2:] Growth of $w_T^n$.\\
      We set $w_T^n(t,x) = u_T^n(t,x) - \utilde_T^n(t,x)$ and $w_T(t,x) = u_T(t,x) - \lambda(T-t) - v(x)$. By uniqueness of viscosity solutions (see \cite{EPQ97}), we have $u_T^n(0,x) = u_{T+S}^n(S,x)$ and $\utilde_T^n(0,x) = \utilde_{T+S}^n(S,x)$.
      \begin{lem}
      We have: $\forall \varepsilon > 0,\, \forall T > 0,\, \exists \Ctilde_T > 0,\, \forall x \in \IR^d,\, \forall n \in \IN^*,$
      \begin{align*}
      &\hspace{-1.5cm}\left|v(x) - \utilde_T^n(0,x) + \lambda T\right|^2 \\
      &\leq \Ctilde_T \left\{ \IE\left[\left|v\left(X_T^x\right) - v^n\left(X_T^{n,x}\right)\right|^2\right] + \left\|\rho-\rho^n\right\|_\infty^2 + \left(\left\|\Xi-\Xi^n\right\|_\infty^2 + \frac{1}{n^\varepsilon}\right) \left(1+|x|^{2(2+\varepsilon)}\right)\right\}.
      \end{align*}
      \end{lem}
      \begin{proof}
      We set $\delta \Ytilde_s^n = Y_s^x - \Ytilde_s^{n,x} + \lambda(T-s)$ and $\delta \Ztilde_s^n = Z_s^x - \Ztilde_s^{n,x}$; then,
      \[ \drm \delta \Ytilde_s^n = \drm Y_s^x - \drm \Ytilde_s^{n,x} - \lambda \drm s = \left\{\psi^n\left(X_s^{n,x}, \Ztilde_s^{n,x}\right)-\psi\left(X_s^x, Z_s^x\right)\right\} \, \drm s + \delta \Ztilde_s^{n,x} \, \drm W_s.\]
      We will use \cite{Rennes03}; to use the same notations, we set $f(s,y,z) = \psi\left(X_s^x,Z_s^x\right) - \psi^n\left(X_s^x, Z_s^x-z\right)$. We have:
      \begin{align*}
      |f(s,y,z)| &\leq \left\|\rho-\rho^n\right\|_\infty + \left(2 M_\psi\left(1+\left|X_s^x\right|\right) + 2 K_z \left\|\sigma^{-1}\right\|_\infty \left|Z_s^x\right|\right) \ind_{\left|Z_s^x\right| > n} + K_x \left|X_s^x - X_s^{n,x}\right|&  \\
      & \pushright{+ K_z \left|Z_s^x\right| \|\sigma\|_\lip \left\|\sigma^{-1}\right\|_\infty^2 \left|X_s^x - X_s^{n,x}\right| + K_z \left\|\sigma^{-1}\right\|_\infty |z|}
      & =: f_s + \lambda |z|.
      \end{align*}
      We define $\dst F = \int_0^T f_s \, \drm s$; because $F \in \Lrm^2$ and $\delta \Ytilde^n \in \Scal^2$, we get, $\IE\left[\left|\delta \Ytilde^n\right|^{*,2}_{0,T}\right] \leq C \erm^{2 \lambda^2 T} \IE\left[\left|\delta \Ytilde^n_T\right|^2 + F^2\right]$. We just need an upper bound for $\IE\left[F^2\right]$:
      \begin{align*}
      \IE\left[F^2\right]
      & \leq 5 T^2 K_x^2 \IE\left[\left|X^x - X^{n,x}\right|^{*,2}_{0,T}\right] + 5 K_z^2 \|\sigma\|_\lip^2 \left\|\sigma^{-1}\right\|_\infty^4 \IE\left[\left|X^x-X^{n,x}\right|^{*,2}_{0,T} \left(\int_0^T \left|Z_s^x\right| \, \drm s\right)^2\right] + 5 T^2 \left\|\rho-\rho^n\right\|_\infty^2 \\
      & \pushright{ + 40 M_\psi^2 \IE\left[\left(1+\left|X^x\right|^{*,2}_{0,T}\right) \left(\int_0^T \ind_{\left|Z_s^x\right| > n} \, \drm s\right)^2\right] + 20 K_z^2 \left\|\sigma^{-1}\right\|_\infty^2 \IE\left[\left(\int_0^T \left|Z_s^x\right| \ind_{\left|Z_s^x\right| > n} \, \drm s\right)^2\right].}
      \end{align*}
      Thanks to the Lemma 2.1 of \cite{MZ02}, we have:
      $\dst \forall p \geq 2,\, \exists C_{T,p} > 0,\, \IE\left[\left|X^x - X^{n,x}\right|^{*,p}_{0,T}\right] \leq C_{T,p} \left\|\Xi-\Xi^n\right\|_\infty^p$.
      Moreover, we have, for every $\alpha \in (0,2]$:
      \[ \IE\left[\left(\int_0^T \ind_{\left|Z_s^x\right| > n} \, \drm s\right)^2\right] \leq T \int_0^T \IP\left(\left|Z_s^x\right| > n\right) \, \drm s \leq \frac{T}{n^\alpha} \int_0^T \IE\left[\left|Z_s^x\right|^\alpha\right] \, \drm s \leq \frac{T^{2-\frac{\alpha}{2}}}{n^\alpha} \IE\left[\int_0^T \left|Z_s^x\right|^2 \, \drm s\right]^{\frac{\alpha}{2}}.\]
      We also recall -- see \cite{Rennes03} -- for $p > 1$:
      $\dst \IE\left[\left(\int_0^T \left|Z_s^x\right|^2 \, \drm s\right)^{\frac{p}{2}}\right] \leq C_{p,T} \left(1+|x|^{2p}\right)$,
      because $v$ has quadratic growth.
      Using Cauchy-Schwarz inequality and the above remarks, we get:
      \[ \IE\left[F^2\right] \leq C_T \left(\left\|\rho-\rho^n\right\|_\infty^2 + \left(\left\|\Xi-\Xi^n\right\|_\infty^2 + \frac{1}{n^\varepsilon}\right) \left(1+|x|^{2(2+\varepsilon)}\right)\right),\]
      for every $\varepsilon > 0$.
      \end{proof}
      \begin{lem}
      We have: $\forall \varepsilon > 0,\, \forall T > 0,\, \exists C_T > 0,\, \forall x \in \IR^d,\, \forall n \in \IN^*,$
      \begin{align*}
      &\hspace{-1.5cm}\left|u_T(0,x) - u_T^n(0,x)\right|^2 \\
      &\leq C_T \left\{ \IE\left[\left|g\left(X_T^x\right) - g^n\left(X_T^{n,x}\right)\right|^2\right] + \left\|\rho-\rho^n\right\|_\infty^2 + \left(\left\|\Xi-\Xi^n\right\|_\infty^2 + \frac{1}{n^\varepsilon}\right) \left(1+|x|^{(2+\varepsilon)\mu}\right)\right\}.
      \end{align*}
      \end{lem}
      \begin{proof}~\\
      The proof is essentially the same; the main difference is that the exponent $\mu$ appears in the following inequality (see \cite{Rennes03}): $\dst \IE\left[\left(\int_0^T \left|Z_s^{T,0,x}\right|^2 \, \drm s\right)^{\frac{p}{2}}\right] \leq C_{p,T} \left(1 + \IE\left[\left|Y_T^{T,0,x}\right|^p\right]\right)$.
      \end{proof}
      We keep in mind the following results:
      \begin{align*}
      \left|w_T^n(0,x) - w_T(0,x)\right|^2 
      &\leq C_T \left\{ \IE\left[\left|g\left(X_T^x\right) - g^n\left(X_T^{n,x}\right)\right|^2 + \left|v\left(X_T^x\right) - v^n\left(X_T^{n,x}\right)\right|^2\right] \right.\\
      &\pushright{ \left. + \left\|\rho-\rho^n\right\|_\infty^2 + \left(\left\|\Xi-\Xi^n\right\|_\infty^2 + \frac{1}{n^\varepsilon}\right) \left(1+|x|^{(2+\varepsilon)\mu}\right)\right\},}\\
      \left|w_T(0,x)\right| &\leq C \left(1+|x|^\mu\right).
      \end{align*}
\item[Step 3:] Variation of $w_T^n(0,\bullet)$.\\
      We write:
      \[ Y_0^{n,x} - \Ytilde_0^{n,x} = Y_T^{n,x} - \Ytilde_T^{n,x} + \int_0^T \left\{ \psi^n\left(X_s^{n,x}, Z_s^{n,x}\right) - \psi^n\left(X_s^{n,x}, \Ztilde_s^{n,x}\right)\right\} \, \drm s - \int_0^T \left\{ Z_s^{n,x} - \Ztilde_s^{n,x}\right\} \, \drm W_s.\]
      We have $Z_s^{n,x} = \partial_x u_T^n\left(s, X_s^{n,x}\right) \sigma\left(X_s^{n,x}\right)$ and $\Ztilde_s^{n,x} = \partial_x \utilde_T^n\left(s,X_s^{n,x}\right) \sigma\left(X_s^{n,x}\right)$ (see the Theorem 3.1 of \cite{MZ02}). Consequently, $Z_s^{n,x} - \Ztilde_s^{n,x} = \partial_x w_T^n\left(s, X_s^{n,x}\right) \sigma\left(X_s^{n,x}\right)$. We define the following function:
      \[ \beta_T^n(t,x) = \left\{ \begin{array}{ll}
       \dfrac{\psi^n\left(x, \partial_x u_T^n(t,x) \sigma(x)\right) - \psi^n\left(x, \partial_x \utilde_T^n(t,x) \sigma(x)\right)}{\left|\partial_x w_T^n(t,x) \sigma(x)\right|^2} \left(\partial_x w_T^n(t,x) \sigma(x) \right)^*, & \text{if } t < T \text{ and } \partial_x w_T^n(t,x) \neq 0, \\ 
       \dfrac{\psi^n\left(x, \partial_x u_T^n(T,x) \sigma(x)\right) - \psi^n\left(x, \partial_x \utilde_T^n(T,x) \sigma(x)\right)}{\left|\partial_x w_T^n(T,x) \sigma(x)\right|^2} \left(\partial_x w_T^n(T,x) \sigma(x) \right)^*, & \text{if } t \geq T \text{ and } \partial_x w_T^n(T,x) \neq 0, \\
       0, & \text{otherwise.} \end{array} \right.\] 
       The function $\beta_T^n$ is bounded by $K_z \left\|\sigma^{-1}\right\|_\infty$; $\dst \Wtilde_t = W_t - \int_0^t \beta_T^n\left(s,X_s^{n,x}\right) \, \drm s$ is a Brownian motion on $[0,T]$ under the probability $\IQ_T^n$. Because $\psi^n\left(X_s^{n,x}, Z_s^{n,x}\right) - \psi^n\left(X_s^{n,x}, \Ztilde_s^{n,x}\right) = \left(Z_s^{n,x} - \Ztilde_s^{n,x}\right) \beta_T^n\left(s,X_s^{n,x}\right)$, we have:
       \[w_T^n(0,x) = Y_0^{n,x} - \Ytilde_0^{n,x} = \IE^{\IQ_T^n}\left[Y_T^{n,x} - \Ytilde_T^{n,x}\right] = \IE^{\IQ_T^n}\left[g^n\left(X_T^{n,x}\right) - v^n\left(X_T^{n,x}\right)\right] = \Pcal_T^n\left[g^n-v^n\right](x),\]
       where $\Pcal^n$ is the Kolmogorov semigroup of the SDE: $\drm U_t = \Xi^n\left(U_t\right) \, \drm t + \sigma\left(U_t\right)\left(\drm W_t + \beta_T^n\left(t, U_t\right) \, \drm t\right)$. By Lemma \ref{lem:approx_lipschitz} and corollary \ref{cor:semigroup}, we get:
       \[\left|w_T^n(0,x) - w_T^n(0,y)\right| = \left|\Pcal_T^n\left[g^n-v^n\right](x) - \Pcal_T^n\left[g^n-v^n\right](y)\right| \leq \chat_n c_{g^n-v^n} \left(1+|x|^{\mu} + |y|^{\mu}\right) \erm^{-\nuhat_n T},\]
       where $\chat_n$ and $\nuhat_n$ depend only on $\eta_1$, $\eta_2$, $r_1$, $r_2$, $K_z$ and $\left\|\sigma^{-1}\right\|_\infty$; also $c_{g^n-v^n}$ is the constant appearing in the $\mu$-polynomial growth of $g^n-v^n$: it only depends on $g-v$ and is independent of $n$. So:
       \begin{equation}
       \exists \chat, \nuhat >0,\, \forall T > 0,\, \forall x,y \in \IR^d,\, \forall n \in \IN^*,\, \left|w_T^n(0,x) - w_T^n(0,y)\right| \leq \chat\left(1+|x|^{\mu} + |y|^{\mu}\right) \erm^{-\nuhat T}.
       \label{eq:LTB_step3}
       \end{equation}
\item[Step 4:] Upperbound for $\partial_x w_T^n(0,\bullet)$.\\
      We use the Theorem 4.2 of \cite{MZ02}; for every $T' \in (t,T]$, we have:
      \[ \partial_x u_T^n(t,x) = \IE\left[u_T^n\left(T', X_{T'}^{n,t,x}\right) N_{T'}^{n,t,x} + \int_t^{T'} \psi^n\left(X_s^{n,t,x}, Z_s^{n,t,x}\right) N_s^{n,t,x} \, \drm s \right],\]
      \[ \partial_x \utilde_T^n(t,x) = \IE\left[\utilde_T^n\left(T', X_{T'}^{n,t,x}\right) N_{T'}^{n,t,x} + \int_t^{T'} \psi^n\left(X_s^{n,t,x}, \Ztilde_s^{n,t,x}\right) N_s^{n,t,x} \, \drm s \right],\]
      where $\dst N_s^{n,t,x} = \frac{1}{s-t} \left(\int_t^s \left(\sigma\left(X_r^{n,t,x}\right)^{-1} \nabla X_r^{n,t,x}\right)^* \, \drm W_r\right)^*$. By substraction:
      \[ \partial_x w_T^n(t,x) = \IE\left[w_{T-T'}^n\left(0,X_{T'}^{n,t,x}\right) N_{T'}^{n,t,x} + \int_t^{T'} \left\{\psi^n\left(X_s^{n,t,x}, Z_s^{n,t,x}\right) - \psi^n\left(X_s^{n,t,x}, \Ztilde_s^{n,t,x}\right)\right\} N_s^{n,t,x} \, \drm s\right].\]
      By Itô's formula, we prove that:
      $\dst \drm \left|\nabla X_s^{n,t,x}\right|_F^2 \leq \left(2 \|\Xi\|_\lip + \|\sigma\|_\lip^2\right) \left|\nabla X_s^{n,t,x}\right|_F^2 \, \drm s + \drm M_s$,
      with $M$ a martingale starting at $0$. We set $\dst \lambda = 2 \|\Xi\|_\lip + \|\sigma\|_\lip^2$ and we get:
      $\dst \IE\left[\left|\nabla X_s^{n,t,x}\right|_F^2\right] \leq d^2 \erm^{\lambda (s-t)}$.
      Then, for any $s \in [t,T']$,
      \[\IE\left[\left|N_s^{n,t,x}\right|^2\right]  
        \leq \left(\frac{1}{s-t}\right)^2 \IE\left[\int_t^s \left|\sigma\left(X_r^{n,t,x}\right)^{-1} \nabla X_r^{n,t,x}\right|_F^2 \, \drm r\right] \\
        \leq \frac{d^2 \left\|\sigma^{-1}\right\|_\infty^2}{s-t} \erm^{\lambda(T'-t)}.\]
      Recalling that $(x,p) \mapsto \psi^n(x,p\sigma(x))$ is Lipschitz, we obtain:
      \begin{align*}
      \left| \partial_x w_T^n(t,x) \right|
      & \leq \IE\left[\left|w_{T-T'}^n\left(0,X_{T'}^{n,t,x}\right)\right| \left|N_{T'}^{n,t,x}\right|\right] + \int_t^{T'} \IE\left[\left|\psi^n\left(X_s^{n,t,x}, Z_s^{n,t,x}\right) - \psi^n\left(X_s^{n,t,x}, \Ztilde_s^{n,t,x}\right)\right| \left|N_s^{n,t,x} \right| \right] \, \drm s \\
      & \leq \IE\left[\left|w_ {T-T'}^n\left(0, X_{T'}^{n,t,x}\right)\right|^2\right]^{\frac{1}{2}} \IE\left[\left|N_{T'}^{n,t,x}\right|^2\right]^{\frac{1}{2}} + K_z \int_t^{T'} \IE\left[\left|\partial_x w_T^n\left(s,X_s^{n,t,x}\right)\right|^2\right]^{\frac{1}{2}} \IE\left[\left|N_s^{n,t,x}\right|^2\right]^{\frac{1}{2}} \, \drm s \\
      & \leq \IE\left[\left|w_ {T-T'}^n\left(0, X_{T'}^{n,t,x}\right)\right|^2\right]^{\frac{1}{2}} \frac{d \left\|\sigma^{-1}\right\|_\infty}{\sqrt{T'-t}} \erm^{\frac{\lambda}{2} (T'-t)} + K_z d \left\|\sigma^{-1}\right\|_\infty \erm^{\frac{\lambda T'}{2}} \int_t^{T'} \IE\left[\left|\partial_x w_T^n\left(s,X_s^{n,t,x}\right)\right|^2\right]^{\frac{1}{2}} \, \frac{\drm s}{\sqrt{s-t}}.
      \end{align*}
      Thanks to step 2 and proposition \ref{majo_X_Sp}, we have the following upperbound (for any $\varepsilon > 0$):
      \[ \IE\left[\left|w_{T-T'}^n\left(0, X_{T'}^{n,t,x}\right)\right|^2\right] \leq C_{T'} \left(1+|x|^{2\mu}\right) + C_{T-T'}\left( \Delta_t^{n,x} + \left\|\rho-\rho^n\right\|^2_\infty + C_{T'}\left( \left\|\Xi - \Xi^n\right\|_\infty^2 + \frac{1}{n^\varepsilon}\right) \left(1+|x|^{(2+\varepsilon)\mu}\right)\right),\]
      where $\Delta_t^{n,x} = \IE\left[\left|v\left(X_{T-T'}^{0,X_{T'}^{n,t,x}}\right) - v^n\left(X_{T-T'}^{n,0,X_{T'}^{n,t,x}}\right)\right|^2 + \left|g\left(X_{T-T'}^{0,X_{T'}^{n,t,x}}\right) - g^n\left(X_{T-T'}^{n,0,X_{T'}^{n,t,x}}\right)\right|^2\right]$. So, for every $\varepsilon > 0$,
      \begin{align*}
      \left|\partial_x w_T^n(t,x)\right|
      & \leq \left\{C_{T'} \left(1+|x|^{\mu}\right) + C_{T-T'}\left( \sqrt{\Delta_t^{n,x}} + \left\|\rho-\rho^n\right\|_\infty + C_{T'} \left(\left\|\Xi-\Xi^n\right\|_\infty + n^{-\frac{\varepsilon}{2}} \right) \left(1+|x|^{(1+\varepsilon)\mu}\right)\right)\right\}  \\
      & \pushright{ \times \frac{d \left\|\sigma^{-1}\right\|_\infty}{\sqrt{T'-t}} \erm^{\frac{\lambda}{2} (T'-t)} + K_z d \left\|\sigma^{-1}\right\|_\infty \erm^{\frac{\lambda T'}{2}} \int_t^{T'} \IE\left[\left|\partial_x w_T^n\left(s,X_s^{t,x}\right)\right|^2\right]^{\frac{1}{2}} \, \frac{\drm s}{\sqrt{s-t}}.}
      \end{align*}
      Let us take $\zeta > \mu$; we set $\dst \varphi_T^n(t) = \sup_{x \in \IR^d} \frac{\left| \partial_x w_T^n(t,x)\right|}{1+|x|^\zeta}$. Using the appendix, we see that:
      \[ \left|\partial_x w_T^n(t,x)\right| \leq \left|\partial_x u_T^n(t,x)\right| + \left|\partial_x \utilde_T^n(t,x)\right| \leq C\left(1+|x|^\mu\right).\]
      This proves that $\varphi_T^n$ is well defined on $[0,T)$; moreover, it is bounded on every set $[0,T']$, for every $T' < T$. Also, for every $s \in [t, T']$, $\dst \IE\left[\left|\partial_x w_T^n\left(s, X_s^{t,x}\right)\right|^2 \right]^{\frac{1}{2}} \leq \varphi_T^n(s) C_{T'} \left(1+|x|^{\zeta}\right)$,
      where $C_{T'}$ is independent of $T$ and $x$. We get:
      \begin{align*}
      \frac{\left|\partial_x w_T^n(t,x)\right|}{1+|x|^\zeta}
       &\leq \left\{ C_{T'} + C_{T-T'} \left(\frac{\sqrt{\Delta_t^{n,x}}}{1+|x|^\zeta} + \left\|\rho-\rho^n\right\|_\infty + C_{T'} \left(\left\|\Xi-\Xi^n\right\|_\infty + n^{-\frac{\varepsilon}{2}} \right)\right)\right\} \frac{d \left\|\sigma^{-1}\right\|_\infty}{\sqrt{T'-t}} \erm^{\frac{\lambda}{2} (T'-t)} \\
       &\pushright{ + C_{T'} \int_t^{T'} \frac{\varphi_T^n(s)}{\sqrt{s-t}} \, \drm s.}
      \end{align*}
      For $\varepsilon$ small enough, we can take the supremum when $x \in \IR^d$, and by a change of variable, it can be rewritten as:
      \begin{align*}
       \varphi_T^n(T'-t) &\leq \underbrace{\left\{ C_{T'} + C_{T-T'} \left(\sup_{x \in \IR^d} \frac{\sqrt{\Delta_{T'-t}^{n,x}}}{1+|x|^\zeta} + \left\|\rho-\rho^n\right\|_\infty + C_{T'} \left(\left\|\Xi-\Xi^n\right\|_\infty + n^{-\frac{\varepsilon}{2}} \right)\right)\right\} \frac{d \left\|\sigma^{-1}\right\|_\infty}{\sqrt{t}} \erm^{\frac{\lambda}{2}t}}_{= a(t)} \\
       & \pushright{ + C_{T'} \int_0^t \frac{\varphi_T^n(T'-u)}{\sqrt{t-u}} \, \drm u,}
       \end{align*}
      where $0 < t \leq T' < T$. We use the Lemma 7.1.1 of \cite{Henry}; indeed, the function $a$ is locally integrable on $(0,T')$, since $\dst \sup_{x \in \IR^d} \frac{\sqrt{\Delta_{T'-t}^{n,x}}}{1+|x|^\zeta}$ is bounded independently of $t$. We get $\dst \varphi_T^n(T'-t) \leq a(t) + C_{T'} \int_0^t E'\left(C_{T'}(t-u)\right) a(u) \, \drm u$, and the integral is well defined, since $\dst E(z) = \sum_{n=0}^\infty \frac{z^{\frac{n}{2}}}{\Gamma\left(\frac{n}{2}+1\right)}$ is $\Ccal^1$ and $\dst E'(z) \underset{z \rightarrow 0^+}{\sim} \frac{\Gamma\left(\frac{1}{2}\right)}{\sqrt{z}}$. Choose $t = T'$ and get:
      \[
      \left|\partial_x w_T^n(0,x)\right|
      \leq \left[C_{T'} + C_{T-T', T'} \gamma^n_{T'}(0) + C_{T'} \int_0^{T'} E'\left(C_{T'}(T'-u)\right) \left\{ C_{T'} + C_{T-T'} \gamma^n_{T'}(T'-u)\right\} \frac{\drm u}{\sqrt{u}} \right] \left(1+|x|^{\zeta}\right),
      \]
      where we denoted $\dst \gamma^n_{T'}(t) = \sup_{x \in \IR^d} \frac{\sqrt{\Delta_t^{n,x}}}{1+|x|^\zeta} + \left\|\rho-\rho^n\right\|_\infty + C_{T'} \left(\left\|\Xi-\Xi^n\right\|_\infty + n^{-\frac{\varepsilon}{2}} \right)$.
\item[Step 5:] Taking the limit when $n \rightarrow \infty$.
      From Theorem \ref{thm:1st_behav} and equation (\ref{eq:LTB_step3}), we get easily:
      \begin{equation}
            \exists C > 0,\, \forall T >0,\, \forall x \in \IR^d,\, \left|w_T(0,x)\right| \leq C\left(1+|x|^\mu\right),
            \label{eq:LTB_step5.1}
      \end{equation}
      \begin{equation}
            \exists \chat, \nuhat >0,\, \forall T > 0,\, \forall x,y \in \IR^d,\, \left|w_T(0,x) - w_T(0,y)\right| \leq \chat\left(1+|x|^{\mu} + |y|^{\mu}\right) \erm^{-\nuhat T}.
            \label{eq:LTB_step5.2}
      \end{equation}
      The function $w_T$ has no reason to be $\Ccal^1$, so we use the mean value theorem:
      \[
      \frac{\left|w_T^n(0,x) - w_T^n(0,y)\right|}{\left(1+|x|^{\zeta} + |y|^\zeta\right) |x-y|} \leq \left[C_{T'} \int_0^{T'} E'\left(C_{T'}(T'-u)\right) \left\{ C_{T'} + C_{T-T'} \gamma^n_{T'}(T'-u)\right\} \frac{\drm u}{\sqrt{u}} + C_{T'} + C_{T-T', T'} \gamma^n_{T'}(0)\right].\]
      Our next goal will be to get an upperbound for $\gamma^n_{T'}$. Let $R > 0$; we denote $\alpha = \zeta-\mu$;
      \begin{align*}
      \IE\left[\left|\left(g-g^n\right)\left(X_{T-T'}^{n,0,X_{T'}^{n,t,x}}\right)\right|^2 \ind_{\left|X_{T-T'}^{n,0,X_{T'}^{n,t,x}}\right| > R}\right]
      & \leq \IE\left[\left|\left(g-g^n\right)\left(X_{T-T'}^{n,0,X_{T'}^{n,t,x}}\right)\right|^4\right]^{\frac{1}{2}} \IP\left(\left|X_{T-T'}^{n,0,X_{T'}^{n,t,x}}\right| > R\right)^{\frac{1}{2}} \\
      & \leq C \IE\left[\left(1+\left|X_{T-T'}^{n,0,X_{T'}^{n,t,x}}\right|^\mu\right)^4\right]^{\frac{1}{2}} \frac{\IE\left[\left|X_{T-T'}^{n,0,X_{T'}^{n,t,x}}\right|^{2\alpha}\right]^{\frac{1}{2}}}{R^\alpha} \\
      &\leq C_{T-T', T'} \frac{1+|x|^{2\mu+\alpha}}{R^\alpha}.
      \end{align*}
      By our proposition \ref{majo_X_Sp} and the Lemma 2.1 of \cite{MZ02}, we get:
      \[ \IE\left[\left|g\left(X_{T-T'}^{0,X_{T'}^{n,t,x}}\right) - g\left(X_{T-T'}^{n,0,X_{T'}^{n,t,x}}\right)\right|^2\right] \leq C_{T-T', T'} \left(1+|x|^{2\mu}\right)\left\|\Xi - \Xi^n\right\|_\infty^2.\]
      We can do exactly the same with $g$ and $g^n$ replaced by $v$ and $v^n$. This way, for any $R > 0$:
      \[ \sup_{x \in \IR^d} \frac{\sqrt{\Delta_{T'-t}^{n,x}}}{1+|x|^\zeta} \leq C_{T-T',T'} \left(R^{-\frac{\alpha}{2}} + \left\|\Xi-\Xi^n\right\|_\infty\right) + \left\|g-g^n\right\|_{\infty, \Bcal(0,R)} + \left\|v-v^n\right\|_{\infty, \Bcal(0,R)}.\]
      To sum up, we have:
      $\forall \alpha > 0,\, \forall T'>0,\, \exists C_{T'} >0,\, \forall T>T',\, \exists C_{T-T',T'}>0,\, \forall x,y \in \IR^d,\, \forall R > 0,\, \forall n \in \IN^*$,
      \begin{align*}
      & \left|w_T^n(0,x) - w_T^n(0,y)\right| \left(1+|x|^{\mu+\alpha} + |y|^{\mu+\alpha}\right)^{-1} |x-y|^{-1}\\
      & \leq \left[C_{T'} + C_{T-T', T'} \left(R^{-\frac{\alpha}{2}} + \left\|\Xi-\Xi^n\right\|_\infty + \left\|g-g^n\right\|_{\infty, \Bcal(0,R)} + \left\|v-v^n\right\|_{\infty, \Bcal(0,R)}\right) \right.\\
      & \pushright{ \left. +\ C_{T'} \int_0^{T'} E'\left(C_{T'}(T'-u)\right) \left\{ C_{T'} + C_{T-T', T'} \left(R^{-\frac{\alpha}{2}} + \left\|\Xi-\Xi^n\right\|_\infty + \left\|g-g^n\right\|_{\infty, \Bcal(0,R)} + \left\|v-v^n\right\|_{\infty, \Bcal(0,R)}\right)\right\} \frac{\drm u}{\sqrt{u}}\right]}
      \end{align*}
      We take the limit as $n$ and then $R$ go to the infinity: for every $\alpha > 0$,
      \[ \forall T'>0,\, \exists \Ccal_{T'} >0,\, \forall T>T',\, \forall x,y \in \IR^d,\,
      \left|w_T(0,x) - w_T(0,y)\right| 
      \leq \Ccal_{T'} \left(1+|x|^{\mu+\alpha} + |y|^{\mu+\alpha}\right)|x-y|.\]
\item[Step 6:] Convergence of $w_T(0,\bullet)$ when $T \rightarrow \infty$.\\
      Thanks to equation (\ref{eq:LTB_step5.1}), by a diagonal argument, there exists an increasing sequence $\left(T_i\right)_{i \in \IN}$, with limit $+\infty$; a function $w : D \rightarrow \IR$, with $D \subset \IR^d$ countable and dense, such that $\left(w_{T_i}\right)_i$ converges pointwise over $D$ to $w$. The equation (\ref{eq:LTB_step5.2}), with $x,y \in D$, tells us that $w$ is equal to a constant $L$ on $D$. The same equation, with $x \in \IR^d$ and $y \in D$, gives us that $w = L$ on the whole $\IR^d$. Let $K$ be a compact in $\IR^d$; we define $\Acal = \left\{ \restriction{w_T(0,\bullet)}{K} \middle| T > 1\right\} \subset \Ccal(K, \IR)$; it is equicontinuous, because $w_T(0,\bullet)$ is uniformly Lipschitz. For all $x \in K$, the set $\Acal(x) = \left\{ w_T(0,x) \middle| T > 1\right\}$ is bounded (see equation (\ref{eq:LTB_step5.1}). By Ascoli's theorem, $\Acal$ is relatively compact in $\Ccal(K,\IR)$. Let $x \in K$,
      \[ \left|w_{T_i}(0,x) - L\right| \leq \left|w_{T_i}(0,x) - w_{T_i}(0,0)\right| + \left|w_{T_i}(0,0) - L\right| \leq \chat \left(1+|x|^{\mu}\right) \erm^{-\nuhat T_i} + \left|w_{T_i}(0,0) - L\right|,\]
      which proves that the functions $w_{T_i}(0,\bullet)$ converge uniformly to $L$ on $K$. So $L$ is an accumulation point of $\Acal$. Since $\Acal$ is relatively compact, if we prove that $L$ is the unique accumulation point of $\Acal$, then, we get the uniform convergence of $w_T(0,\bullet)$ to $L$ on $K$. It works for any compact $K \subset \IR^d$, and $L$ does not depend on $K$. We conclude:
      \[ \forall x \in \IR^d,\, w_T(0,x) \underset{T \rightarrow \infty}{\longrightarrow} L.\]
\item[Step 7:] $L$ is the only accumulation point of $\Acal$.\\
      Let $w_{\infty, K}$ an accumulation point of $\Acal$, \emph{i.e.} $\left\| \restriction{w_{T'_i}(0,\bullet)}{K} - w_{\infty,K}\right\|_\infty \underset{i \rightarrow \infty}{\longrightarrow} 0$, for a sequence $\left(T'_i\right)$. Again, the equation (\ref{eq:LTB_step5.2}) tells us that $w_{\infty,K}$ is equal to a constant $L_K$ on $K$. Our goal is now to show that $L = L_K$. Be careful: now, $Y^n$ and $\Ytilde^n$ will follow the same equations as before, but their terminal conditions will be given at time $T+S$. So,
      \[ w_{T+S}^n(0,x) 
      = Y_0^n - \Ytilde_0^n 
      = Y_S^n - \Ytilde_S^n + \int_0^S \left\{ \psi^n\left(X_r^x, Z_r^n\right) - \psi^n\left(X_r^x, \Ztilde_r^n\right)\right\} \, \drm r - \int_0^S \left\{ Z_r^n - \Ztilde_r^n \right\} \, \drm W_r.\]
      We define the function:
      \[ \beta_{T,S}^n(t,x) = \left\{ \begin{array}{ll}
       \dfrac{\psi^n\left(x, \partial_x u_{T+S}^n(t,x) \sigma(x)\right) - \psi^n\left(x, \partial_x \utilde_{T+S}^n(t,x) \sigma(x)\right)}{\left|\partial_x w_{T+S}^n(t,x) \sigma(x)\right|^2} \left(\partial_x w_{T+S}^n(t,x) \sigma(x) \right)^*, \\ &\hspace{-4cm}\text{if } t < S \text{ and } \partial_x w_{T+S}^n(t,x) \neq 0, \\ 
       \dfrac{\psi^n\left(x, \partial_x u_{T+S}^n(S,x) \sigma(x)\right) - \psi^n\left(x, \partial_x \utilde_{T+S}^n(S,x) \sigma(x)\right)}{\left|\partial_x w_{T+S}^n(S,x) \sigma(x)\right|^2} \left(\partial_x w_{T+S}^n(S,x) \sigma(x) \right)^*, \\ &\hspace{-4cm} \text{if } t \geq S \text{ and } \partial_x w_{T+S}^n(S,x) \neq 0, \\
       0, & \hspace{-4cm} \text{otherwise.} \end{array} \right.\]
      This function is bounded by $K_z \left\|\sigma^{-1}\right\|_\infty$; $\dst\Wtilde_t = W_t - \int_0^t \beta_{T,s}^n\left(r,X_r^x\right) \, \drm r$ is a Brownian motion on $[0,S]$ under the probability $\IQ^{T,S,n,x}$.
      This way, we can write:
      \[ w_{T+S}^n(0,x) = \IE^{\IQ^{T,S,n,x}}\left[Y_S^n - \Ytilde_S^n\right] = \IE^{\IQ^{T,S,n,x}} \left[w_{T+S}^n\left(S,X_S^x\right)\right] = \IE^{\IQ^{T,S,n,x}}\left[w_T^n\left(0, X_S^x\right)\right] = \Pcal_S^n\left[w_T^n(0,\bullet)\right](x),\]
      where $\Pcal^n$ is the Kolmogorov semigroup of the SDE: $\drm U_t = \Xi\left(U_t\right) \, \drm t + \sigma\left(U_t\right)\left(\drm W_t + \beta_{T,S}^n\left(t,U_t\right) \, \drm t\right)$. Without any loss of generality, we can suppose that $T_i > T'_i$, for every $i \in \IN$. We take $T = T_i'$ and $S = T_i-T'_i$. This way:
      \[ \forall x \in \IR^d,\, \Pcal^n_{T_i-T_i'}\left[w_{T_i'}^n(0,\bullet)\right](x) = w_{T_i}^n(0,x) \underset{n \rightarrow \infty}{\longrightarrow} w_{T_i}(0,x) \underset{i \rightarrow \infty}{\longrightarrow} L.\]
      Now, to prove that $L=L_K$, it suffices to show that $\dst \lim_{i \rightarrow \infty} \limsup_{n \rightarrow \infty} \Pcal_{T_i-T'_i}^n\left[w_{T_i'}^n(0,\bullet)\right](x) = L_K$. We have:
      \[ \left|\Pcal_{T_i-T_i'}^n\left[w_{T_i'}^n(0,\bullet)\right](x)-L_K\right| \leq \left| \Pcal_{T_i-T_i'}^n\left[w_{T_i'}^n(0,\bullet)\right](x) - w_{T_i'}^n(0,x)\right| + \left|w_{T_i'}^n(0,x) - w_{T_i'}(0,x)\right| + \left|w_{T_i'}(0,x) - L_K\right|,\]
      \[ \limsup_{n \rightarrow \infty} \left|\Pcal_{T_i-T_i'}^n\left[w_{T_i'}^n(0,\bullet)\right](x)-L_K\right| \leq \limsup_{n \rightarrow \infty} \left| \Pcal_{T_i-T_i'}^n\left[w_{T_i'}^n(0,\bullet)\right](x) - w_{T_i'}^n(0,x)\right| + \left|w_{T_i'}(0,x) - L_K\right|.\]
      Thanks to Lemma \ref{lem:approx_lipschitz}, $\beta_{T,S}^n$ is the limit of $\Ccal^1$-functions, uniformly bounded, with bounded derivatives $\left(\beta_{T,S}^{n,m}\right)_{m \geq 0}$. We define $U^{m,x}$ as the solution of the SDE: $\drm U_t^{m,x} = \Xi\left(U_t^{m,x}\right) \, \drm t + \sigma\left(U_t^{m,x}\right) \left( \drm W_t + \beta^{n,m}_{T,S}\left(t,U_t^{m,x}\right) \, \drm t\right)$. Thus,
      \begin{align*}
      \left|\Pcal^n_{T_i-T_i'}\left[w_{T_i'}^n(0,\bullet)\right](x) - w_{T_i'}^n(0,x)\right|
      & = \left|\lim_{m \rightarrow \infty} \IE\left[w_{T_i'}^n\left(0, U_{T_i-T_i'}^{m,x}\right)\right] - w_{T_i'}^n(0,x)\right| \\
      & \leq \limsup_{m \rightarrow \infty} \IE\left[\left|w_{T_i'}^n\left(0,U_{T_i-T_i'}^{m,x}\right) - w_{T_i'}^n(0,x)\right|\right] \\
      & \leq \chat \left(1+ \limsup_{m \rightarrow \infty} \IE\left[\left|U_{T_i-T_i'}^{m,x}\right|^{\mu}\right] + |x|^{\mu}\right) \erm^{-\nuhat T_i'},
      \end{align*}
      and the constants $\chat$ and $\nuhat$ are independent of $n$, $m$ and $T_i'$. Thanks to proposition \ref{prop:sup_fini}, we have $\IE\left[\left|U_t^{m,x}\right|^{\mu}\right] \leq C\left(1+|x|^{\mu}\right)$, with $C$ independent of $n$, $m$ and $t$. So,
      \[ \left|\Pcal^n_{T_i-T_i'}\left[w_{T_i'}^n(0,\bullet)\right](x) - w_{T_i'}^n(0,x)\right| \leq \Ctilde \left(1+|x|^{\mu}\right) \erm^{-\nu T_i'},\]
      with $\Ctilde$ and $\nu$ independent of $n$ and $T_i'$. And we can conclude that $L=L_K$.
\item[Step 8:] Speed of convergence.
      Let $x \in \IR^d$ and $T > 0$. We have:
      \begin{align*}
      \left|w_T(0,x) - L \right|
      & = \lim_{V \rightarrow \infty} \left|w_T(0,x) - w_V(0,x)\right|
        = \lim_{V \rightarrow \infty} \lim_{n \rightarrow \infty} \left|w_T^n(0,x) - w_V^n(0,x)\right| \\
      & = \lim_{V \rightarrow \infty} \lim_{n \rightarrow \infty} \left|w_T^n(0,x) - \Pcal^n_{V-T}\left[w_T^n(0,\bullet)\right](x)\right|,
      \end{align*}
      where $\Pcal^n$ is the Kolmogorov semigroup of the SDE: $\drm U_t = \Xi\left(U_t\right) \, \drm t + \sigma\left(U_t\right) \left(\drm W_t + \beta_{T, V-T}^n\left(t,U_t\right) \, \drm t\right)$. Like before, we use Lemma \ref{lem:approx_lipschitz} and proposition \ref{prop:sup_fini}, and we get, for $\delta$ small enough:
      \begin{align*}
      \left|w_T(0,x)-L\right|
      & = \lim_{V \rightarrow \infty} \lim_{n \rightarrow \infty} \left|w_T^n(0,x) - \lim_{m \rightarrow \infty} \IE\left[w_T^n\left(0, U_{V-T}^{m,x}\right)\right]\right| \\
      & \leq \limsup_{V \rightarrow \infty} \limsup_{n \rightarrow \infty} \limsup_{m \rightarrow \infty} \IE\left[\left|w_T^n(0,x) - w_T^n\left(0, U_{V-T}^{m,x}\right)\right|\right] 
        \leq \Ctilde \left(1+|x|^{\mu}\right) \erm^{-\nu T}.
      \end{align*}
\end{description}
\end{proof}

\section{Application to Optimal ergodic problem and HJB equation}

\subsection{Optimal ergodic problem}

In this section, we apply our results to an ergodic control problems. The proofs of the following results are so similar to the ones of \cite{PDE} that we omit them. As before, we consider a process $X^x$ satisfying:
\[ X_t^x = x + \int_0^t \Xi\left(X_s^x\right) \, \drm s + \int_0^t \sigma\left(X_s^x\right) \, \drm W_s,\]
where $\Xi$ and $\sigma$ are Lipschitz continuous, $\Xi$ is weakly dissipative with $\langle \Xi(x),x \rangle \leq \eta_1 - \eta_2 |x|^2$, the function $x \mapsto \sigma(x)^{-1}$ is bounded and $|\sigma(x)|_F^2 \leq r_1 + r_2|x|^2$. Moreover, we pick $\mu \geq 2$ such that $\dfrac{\mu-1}{2} r_2 < \eta_2$. Let $U$ be a separable metric space; we call ``control'' any progressively measurable $U$-valued process. We consider some measurable functions satisfying the following assumptions:
\begin{itemize}
\item $R : U \rightarrow \IR^d$ is bounded;
\item $L : \IR^d \times U \rightarrow \IR$ is uniformly Lipschitz continuous in $x \in \IR^d$ w.r.t. $a \in U$;
\item $g : \IR^d \rightarrow \IR$ is continuous, has polynomial growth and is locally Lipschitz continuous: $\left|g(x)\right| \leq C \left(1+|x|^\mu\right)$ and $\left|g(x)-g(x')\right| \leq C\left(1+|x|^\mu + |x'|^\mu\right) |x-x'|$.
\end{itemize}
Let $X^x$ be the solution of the SDE (\ref{eq:SDE}); for any control $a$ and horizon $T > 0$, we set:
\[ \rho_T^{x,a} = \exp\left(\int_0^T \sigma \left(X_t^x\right)^{-1} R\left(a_t\right) \, \drm W_t - \frac{1}{2} \int_0^T \left|\sigma\left(X_t^x\right)^{-1} R\left(a_t\right)\right|^2 \, \drm t\right) \text{ and } \IP_T^{x,a} = \rho_T^{x,a} \IP \text{ over } \Fcal_T.\]
We define the finite horizon cost as: $\dst J^T(x,a) = \IE_T^{x,a}\left[\int_0^T L\left(X_t^x,a_t\right) \, \drm t\right] + \IE_T^{x,a}\left[g\left(X_t^x\right)\right]$, and the associated optimal control problem is to minimise $J^T(x,a)$ over the set of controls $a^T : \Omega \times [0,T] \rightarrow U$.
We also define an other cost, called ``ergodic cost'': $\dst J(x,a) = \limsup_{T \rightarrow \infty} \frac{1}{T} \IE_T^{x,a}\left[\int_0^T L\left(X_t^x, a_t\right) \, \drm t\right]$, and the associated optimal control problem is to minimise $J(x,a)$ over the set of controls $a : \Omega \times \IR_+ \rightarrow U$.
Due to Girsanov's theorem, $\dst W_t^{a,x} = W_t - \int_0^t \sigma\left(X_s^x\right)^{-1} R\left(a_s\right) \, \drm s$ is a Brownian motion on $[0,T]$ under $\IP_T^{x,a}$ and
\[ \drm X_t^x = \left[\Xi\left(X_t^x\right) + R\left(a_t\right)\right] \, \drm t + \sigma\left(X_t^x\right) \, \drm W_t^{x,a}.\]
We define the Hamiltonian in the following way:
\begin{equation}
 \psi(x,z) = \inf_{a \in U} \left\{ L(x,a) + z \sigma(x)^{-1} R(a) \right\}.
\label{eq:Filippov}
\end{equation}
We recall that, if this infimum is attained for every $x$ and $z$, by Filippov's theorem (see \cite{SW67}), there exists a measurable function $\gamma : \IR^d \times \left(\IR^d\right)^* \rightarrow U$ such that: $\dst \psi(x,z) = L(x,\gamma(x,z))+z\sigma(x)^{-1} R(\gamma(x,z))$.

\begin{lem}
The Hamiltonian $\psi_0$ satisfies:
\begin{itemize}
\item $\forall x \in \IR^d,\, \left|\psi_0(x,0)\right| \leq \left(\|L\|_\lip + |L(0,0)|\right) (1+|x|)$;
\item $\forall x,x' \in \IR^d,\, \forall z,z' \in \left(\IR^d\right)^*,\, \left|\psi_0(x,z)- \psi_0(x',z')\right| \leq \|L\|_\lip |x-x'| + \|R\|_\infty \left|z\sigma(x)^{-1} - z'\sigma(x')^{-1}\right|$.
\end{itemize}
\end{lem}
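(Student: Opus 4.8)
The plan is to read both estimates straight off the representation
$\psi_0(x,z)=\inf_{a\in U}\bigl\{L(x,a)+z\sigma(x)^{-1}R(a)\bigr\}$ from (\ref{eq:Filippov}), using only the two elementary facts that an infimum of a family of functions which are Lipschitz \emph{uniformly in the parameter} is itself Lipschitz with the same constant, and that the infimum is $\leq$ any single member of the family. No measurable selection (Filippov) is needed for the bounds themselves, so the proof is essentially a direct computation; I will present it as such.

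First I would settle the case $z=0$. There the term $z\sigma(x)^{-1}R(a)$ vanishes, so $\psi_0(x,0)=\inf_{a\in U}L(x,a)$. The uniform Lipschitz continuity of $L$ in its first variable gives $|L(x,a)-L(0,a)|\leq\|L\|_\lip|x|$ for every $a\in U$, whence $\bigl|\inf_a L(x,a)-\inf_a L(0,a)\bigr|\leq\sup_a|L(x,a)-L(0,a)|\leq\|L\|_\lip|x|$; combined with $|\inf_a L(0,a)|\leq|L(0,0)|$ (which also guarantees $\psi_0$ is finite, $\sigma(\cdot)^{-1}$ being bounded so the $z$-term stays finite) this yields $|\psi_0(x,0)|\leq|L(0,0)|+\|L\|_\lip|x|\leq\bigl(\|L\|_\lip+|L(0,0)|\bigr)(1+|x|)$, which is the first assertion.

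For the joint estimate, fix $x,x'\in\IR^d$ and $z,z'\in(\IR^d)^*$; by symmetry it suffices to bound $\psi_0(x,z)-\psi_0(x',z')$. Given $\epsilon>0$, choose $a_\epsilon\in U$ with $L(x',a_\epsilon)+z'\sigma(x')^{-1}R(a_\epsilon)\leq\psi_0(x',z')+\epsilon$ (if the infimum in (\ref{eq:Filippov}) is attained one may take $a_\epsilon=\gamma(x',z')$ and drop $\epsilon$). Then, using $\psi_0(x,z)\leq L(x,a_\epsilon)+z\sigma(x)^{-1}R(a_\epsilon)$,
\begin{align*}
\psi_0(x,z)-\psi_0(x',z')
&\leq \bigl[L(x,a_\epsilon)-L(x',a_\epsilon)\bigr]+\bigl[z\sigma(x)^{-1}-z'\sigma(x')^{-1}\bigr]R(a_\epsilon)+\epsilon\\
&\leq \|L\|_\lip|x-x'|+\|R\|_\infty\,\bigl|z\sigma(x)^{-1}-z'\sigma(x')^{-1}\bigr|+\epsilon.
\end{align*}
Letting $\epsilon\to0$ and then exchanging the roles of $(x,z)$ and $(x',z')$ gives the second assertion. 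There is no real obstacle here; the only points requiring a word of care are the passage to the infimum via the $\epsilon$-approximation and the observation that $R$ enters solely through $\|R\|_\infty$ precisely because $R$ is assumed bounded, so that the estimate is insensitive to how the infimizing control behaves.
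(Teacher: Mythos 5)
Your argument is the standard one and is essentially the computation the paper has in mind (the paper omits the proof, deferring to \cite{PDE}): bound a difference of infima by the supremum over $a$ of the differences, using the uniform Lipschitz bound on $L$ in $x$ and the bound $\|R\|_\infty$, with an $\epsilon$-optimal control to pass to the infimum. Your treatment of the second bullet is complete and correct, and does not need Filippov's theorem, as you say.

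One step in the first bullet, however, is not justified by the hypotheses as stated: from $\psi_0(0,0)=\inf_{a\in U}L(0,a)$ you only get the one-sided bound $\inf_{a}L(0,a)\leq L(0,0)\leq |L(0,0)|$; the inequality $\left|\inf_{a}L(0,a)\right|\leq |L(0,0)|$ that you invoke (and with it the finiteness of $\psi_0$ and the lower bound on $\psi_0(x,0)$) requires a lower bound on $L(0,a)$ \emph{uniform in} $a$, which the assumption ``$L$ uniformly Lipschitz in $x$ w.r.t.\ $a$'' does not give: taking $L(x,a)$ equal to minus the distance in $U$ from $a$ to a fixed point, with $U$ unbounded, makes $\psi_0\equiv-\infty$ while satisfying all the stated hypotheses. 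The lemma itself is only true under such an additional hypothesis (in \cite{PDE} the running cost is bounded in the control variable), so this is an implicit assumption of the statement rather than a defect of your strategy; but you should state explicitly that you use a uniform-in-$a$ bound on $L(0,\bullet)$ instead of presenting it as a consequence of $|L(0,0)|$ alone. The same remark covers the finiteness of $\psi_0(x',z')$ needed to select $a_\epsilon$ in your second estimate.
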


\begin{lem}
For every control $a$, we have: $J^T(x,a) \geq Y_0^{T,x}$, where $Y^{T,x}$ is part of the solution of the finite horizon BSDE:
\begin{equation}
 Y_t^{T,x} = g\left(X_T^x\right) + \int_t^T \psi\left(X_s^x, Z_s^{T,x}\right) \, \drm s - \int_t^T Z_s^{T,x} \, \drm W_s, \ \ \ \forall t \in [0,T].
\label{eq:BSDE_ceo}
\end{equation}
Moreover, if the infimum is attained for every $x$ and $z$ in equation (\ref{eq:Filippov}), we have $J\left(x,\overline{a}^T\right) = Y_0^{T,x}$, where we set $\overline{a}^T_t = \gamma\left(X_t^{x,\overline{a}^T},\nabla u\left(t,X_t^{x,\overline{a}^T}\right) \sigma\left(X_t^{x, \overline{a}^T}\right)\right).$
\end{lem}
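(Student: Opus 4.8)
The plan is to run a verification argument of the type standard for BSDE control (as in \cite{EBSDE1,PDE}); I sketch it. Fix a control $a$; we may assume $J^T(x,a)<\infty$, for otherwise the inequality is vacuous. Under $\IP_T^{x,a}$, Girsanov's theorem (recalled above) says that $W_t^{x,a}=W_t-\int_0^t\sigma(X_s^x)^{-1}R(a_s)\,\drm s$ is a Brownian motion and $\drm X_t^x=[\Xi(X_t^x)+R(a_t)]\,\drm t+\sigma(X_t^x)\,\drm W_t^{x,a}$. The first step is to rewrite the BSDE (\ref{eq:BSDE_ceo}) with respect to $W^{x,a}$: inserting $\drm W_t=\drm W_t^{x,a}+\sigma(X_t^x)^{-1}R(a_t)\,\drm t$ gives
\[ \drm Y_t^{T,x}=\bigl[-\psi(X_t^x,Z_t^{T,x})+Z_t^{T,x}\sigma(X_t^x)^{-1}R(a_t)\bigr]\,\drm t+Z_t^{T,x}\,\drm W_t^{x,a}. \]
By the definition (\ref{eq:Filippov}) of $\psi$ as an infimum over $U$, one has $\psi(X_t^x,Z_t^{T,x})\leq L(X_t^x,a_t)+Z_t^{T,x}\sigma(X_t^x)^{-1}R(a_t)$, so the drift above is at least $-L(X_t^x,a_t)$; integrating over $[0,T]$ and using $Y_T^{T,x}=g(X_T^x)$ yields
\[ Y_0^{T,x}\leq g(X_T^x)+\int_0^T L(X_t^x,a_t)\,\drm t-\int_0^T Z_t^{T,x}\,\drm W_t^{x,a}. \]

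The second step is to take $\IE_T^{x,a}$ of this inequality and kill the stochastic integral. Since $g(X_T^x)$ and the driver have polynomial growth and $X^x$ has moments of every order (Proposition \ref{majo_X_Sp}), standard Lipschitz-BSDE estimates give $\IE[(\int_0^T|Z_t^{T,x}|^2\,\drm t)^q]<\infty$ for all $q\geq1$; as the Girsanov density $\rho_T^{x,a}$ also has all moments (its kernel $\sigma(\bullet)^{-1}R(\bullet)$ being bounded), Hölder's inequality shows that $\int_0^\bullet Z_t^{T,x}\,\drm W_t^{x,a}$ is a genuine $\IP_T^{x,a}$-martingale (alternatively, localize with stopping times, take expectations of the stopped inequality, and pass to the limit using the integrability of $g(X_T^x)$ and of $\int_0^T L(X_t^x,a_t)\,\drm t$, both finite here). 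Hence $Y_0^{T,x}\leq\IE_T^{x,a}[g(X_T^x)]+\IE_T^{x,a}[\int_0^T L(X_t^x,a_t)\,\drm t]=J^T(x,a)$, which is the first assertion.

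For the optimality statement I would repeat the computation, observing that when the infimum in (\ref{eq:Filippov}) is attained, Filippov's measurable selection $\gamma$ satisfies $\psi(x,z)=L(x,\gamma(x,z))+z\sigma(x)^{-1}R(\gamma(x,z))$, so the inequality $\psi(X_t^x,Z_t^{T,x})\leq L+Z\sigma^{-1}R$ becomes an equality precisely when $a_t=\gamma(X_t^x,Z_t^{T,x})$. One defines $\overline{a}^T$ directly on the original space by $\overline{a}^T_t=\gamma(X_t^x,Z_t^{T,x})$ (equivalently $\gamma(X_t^x,\nabla u(t,X_t^x)\sigma(X_t^x))$ when $u\in\Ccal^1$); this is a progressively measurable, hence admissible, control, and since $R$ is bounded the measure $\IP_T^{x,\overline{a}^T}$ is well defined by Girsanov, under which $X^x$ has exactly the closed-loop dynamics appearing in the statement. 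With this choice every inequality above is an equality, so $J^T(x,\overline{a}^T)=Y_0^{T,x}$, and combined with the first part $\overline{a}^T$ is optimal.

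The main obstacle is this last point: here $u$ (equivalently $v$ for the ergodic part) is only known to be continuous, not $\Ccal^1$, so one cannot write $Z_t^{T,x}=\nabla u(t,X_t^x)\sigma(X_t^x)$ literally. Instead one must invoke the measurable feedback representation $Z_t^{T,x}=\zeta_T(t,X_t^x)$ for $\IP$-a.e.\ $(t,\omega)$ (of the type used elsewhere in the paper following \cite{MZ02}) both to define the feedback control $\overline{a}^T$ and to guarantee that the pointwise-attained infimum in (\ref{eq:Filippov}) is actually reached along the trajectory $\IP\otimes\drm t$-almost everywhere. The martingale property invoked in the second step is routine given the boundedness of the Girsanov kernel.
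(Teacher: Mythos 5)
Your verification argument (the Girsanov rewriting with respect to $W^{x,a}$, the pointwise inequality $\psi(x,z)\leq L(x,a)+z\sigma(x)^{-1}R(a)$ to compare drifts, killing the stochastic integral via the bounded Girsanov kernel together with the $\Lrm^p$-estimates on $Z^{T,x}$, and Filippov's selection to turn the inequality into an equality) is correct and is precisely the standard proof that the paper omits by reference to \cite{PDE}. Your final remark is also apt: since $u$ is only known to be continuous here, the optimal feedback must indeed be defined through the measurable representation $Z_t^{T,x}=\zeta_T\left(t,X_t^x\right)$ in the weak formulation, which is the rigorous reading of the formula $\overline{a}^T_t=\gamma\left(X_t^{x,\overline{a}^T},\nabla u\left(t,X_t^{x,\overline{a}^T}\right)\sigma\left(X_t^{x,\overline{a}^T}\right)\right)$ appearing in the statement.
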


\begin{lem}
For every control $a$, we have: $J(x,a) \geq \lambda$, where $\lambda$ is part of the solution of the EBSDE:
\[ Y_t^x = Y_T^x + \int_t^T \left\{\psi\left(X_s^x, Z_s^x\right) - \lambda\right\} \, \drm s - \int_t^T Z_s^x \, \drm W_s, \ \ \ \forall 0 \leq t \leq T < \infty.\]
Moreover, if the infimum is attained for every $x$ and $z$ in equation (\ref{eq:Filippov}), we have $J\left(x,\overline{a}^T\right) = \lambda$, where we set $\overline{a}_t = \gamma\left(X_t^{x,\overline{a}},\nabla v\left(X_t^{x,\overline{a}}\right) \sigma\left(X_t^{x, \overline{a}}\right)\right).$
\end{lem}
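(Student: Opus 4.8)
The plan is to run the classical verification argument for ergodic control, reading the EBSDE under the controlled probability. Fix a control $a$ and a horizon $T>0$; by Girsanov's theorem $W^{a,x}$ is a Brownian motion on $[0,T]$ under $\IP_T^{x,a}$, and $X^x$ solves $\drm X_t^x=[\Xi(X_t^x)+R(a_t)]\,\drm t+\sigma(X_t^x)\,\drm W_t^{a,x}$. I would plug $Y_t^x=v(X_t^x)$, $Z_t^x=\zeta(X_t^x)$ into the EBSDE between $0$ and $T$ and substitute $\drm W_s=\drm W_s^{a,x}+\sigma(X_s^x)^{-1}R(a_s)\,\drm s$, which replaces the driver by $\psi(X_s^x,Z_s^x)-\lambda-Z_s^x\sigma(X_s^x)^{-1}R(a_s)$. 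Since by (\ref{eq:Filippov}) one has $\psi(X_s^x,Z_s^x)\leq L(X_s^x,a_s)+Z_s^x\sigma(X_s^x)^{-1}R(a_s)$ pointwise, discarding the local martingale $\int_0^\cdot Z_s^x\,\drm W_s^{a,x}$ leaves
\[ v(x)\leq v(X_T^x)+\int_0^T\bigl[L(X_s^x,a_s)-\lambda\bigr]\,\drm s-\int_0^T Z_s^x\,\drm W_s^{a,x}. \]

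Next I would take $\IE_T^{x,a}$ of this inequality — justifying that the stochastic integral contributes nothing by localizing along $\tau_n=\inf\{t:\int_0^t|Z_s^x|^2\,\drm s\geq n\}$ and passing to the limit with Fatou's lemma, the remaining terms being integrable thanks to the growth bounds — to obtain $\lambda T\leq\IE_T^{x,a}[v(X_T^x)]-v(x)+\IE_T^{x,a}[\int_0^T L(X_s^x,a_s)\,\drm s]$. Dividing by $T$ and letting $T\to\infty$, the term $v(x)/T$ vanishes, and $\IE_T^{x,a}[v(X_T^x)]/T\to0$ because $v$ has at most quadratic growth while $\sup_{T\geq0}\IE_T^{x,a}[|X_T^x|^\mu]\leq C(1+|x|^\mu)$: this last estimate is Proposition \ref{prop:sup_fini} applied to the drift $s\mapsto\sigma(X_s^x)^{-1}R(a_s)$, which is bounded by $\|\sigma^{-1}\|_\infty\|R\|_\infty$ (the Itô-formula proof of that proposition uses only a bound on the Girsanov drift, not that it be a deterministic function of $(s,x)$, and the standing smallness condition of this section is exactly what makes the resulting differential inequality dissipative). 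Taking $\limsup_{T\to\infty}$ gives $\lambda\leq J(x,a)$, the first claim.

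For the second assertion I would use Filippov's theorem to fix a measurable selector $\gamma$ realizing the infimum, $\psi(x,z)=L(x,\gamma(x,z))+z\sigma(x)^{-1}R(\gamma(x,z))$, and run the same computation with the feedback $\abar_t=\gamma(X_t^x,\zeta(X_t^x))$ — which, once $X^x$ is read under $\IP_T^{x,\abar}$ and one writes $\zeta=\nabla v\,\sigma$, is precisely the control $\gamma(X_t^{x,\abar},\nabla v(X_t^{x,\abar})\sigma(X_t^{x,\abar}))$ of the statement. With this choice the pointwise inequality coming from (\ref{eq:Filippov}) becomes an equality for every $s$, so the whole chain collapses and $\lambda T=\IE_T^{x,\abar}[v(X_T^x)]-v(x)+\IE_T^{x,\abar}[\int_0^T L(X_s^x,\abar_s)\,\drm s]$ for all $T$; dividing by $T$ shows the time-averaged cost converges — as a genuine limit — to $\lambda$, hence $J(x,\abar)=\lambda$.

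The main obstacle, and essentially the only non-routine point, is the justification that $\IE_T^{x,a}[\int_0^T Z_s^x\,\drm W_s^{a,x}]=0$: this is where the polynomial a priori bounds on $v$ and on $Z^x$ (hence, under the Girsanov change of measure, on $X^x$) are needed, through the localization-and-Fatou argument above. The remaining ingredients — the extension of Proposition \ref{prop:sup_fini} to the bounded control-dependent drift, the continuity and measurability of the selector, and the identification of $\abar$ with the feedback of the statement — are exactly as in \cite{PDE}, so I would present this as a sketch only.
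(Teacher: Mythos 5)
Your proposal is correct and follows exactly the verification argument that the paper has in mind: it omits the proof as being the same as in \cite{PDE}, namely rewriting the EBSDE under $\IP_T^{x,a}$ via Girsanov, using that $\psi(x,z)\leq L(x,a)+z\sigma(x)^{-1}R(a)$ (with equality along the Filippov selector), killing the stochastic integral by localization, and dividing by $T$ with the moment bound of Proposition \ref{prop:sup_fini} (which, as you note, only needs the Girsanov drift to be bounded). No essential difference from the paper's intended route.
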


\begin{thm}
For every control $a$, we have: 
\[\liminf_{T \rightarrow \infty} \frac{J^T(x,a)}{T} \geq \lambda.\]
Moreover, if the infimum is attained for every $x$ and $z$ in equation (\ref{eq:Filippov}), we have 
\[\left| J^T\left(x,\overline{a}^T\right) - J\left(x,\overline{a}\right) T + Y_0^x + L\right| \leq C\left(1+|x|^2\right) \erm^{-\nu T}.\]
\end{thm}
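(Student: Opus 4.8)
The plan is to obtain both statements as essentially immediate consequences of the three lemmas of this subsection together with Theorem~\ref{thm:1st_behav} and Theorem~\ref{thm:LTB}. The first step is to record that the Hamiltonian $\psi_0$ of \eqref{eq:Filippov} fits the framework of Sections~4 and~5: by the lemma on $\psi_0$ it satisfies Assumption~\ref{ass_EBSDE} with $K_x=\|L\|_\lip$, $K_z=\|R\|_\infty$ and $M_\psi=\|L\|_\lip+|L(0,0)|$, while $g$ is continuous, of polynomial growth and locally Lipschitz, exactly as required by Theorem~\ref{thm:LTB}; the quantitative smallness $\sqrt{r_2}\,\|R\|_\infty\|\sigma^{-1}\|_\infty+\frac{\mu-1}{2}r_2<\eta_2$ is part of the standing hypotheses of this section. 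Consequently the EBSDE has a solution with parameter $\lambda$ and function $v$ (locally Lipschitz, quadratic growth, $v(0)=0$) given by Theorem~\ref{thm:existence_EBSDE}, the finite-horizon BSDE \eqref{eq:BSDE_ceo} with terminal datum $g(X_T^x)$ has a solution $(Y^{T,x},Z^{T,x})$, and both large-time theorems apply; we set $Y_0^x=v(x)$ and let $L$ be the constant furnished by Theorem~\ref{thm:LTB}.

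For the first inequality we invoke the lemma $J^T(x,a)\geq Y_0^{T,x}$, valid for \emph{every} control $a$: dividing by $T$ and taking the liminf gives
\[ \liminf_{T\to\infty}\frac{J^T(x,a)}{T}\ \geq\ \liminf_{T\to\infty}\frac{Y_0^{T,x}}{T}\ =\ \lambda, \]
where the last equality is the genuine limit provided by Theorem~\ref{thm:1st_behav}.

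For the second assertion, assume the infimum in \eqref{eq:Filippov} is attained for all $x,z$, so that a measurable Filippov selector $\gamma$ exists and the feedback controls $\overline{a}^T$ and $\overline{a}$ are well defined. The lemmas then upgrade the two inequalities to the exact identities $J^T(x,\overline{a}^T)=Y_0^{T,x}$ and $J(x,\overline{a})=\lambda$, whence
\[ J^T(x,\overline{a}^T)-J(x,\overline{a})\,T-\bigl(Y_0^x+L\bigr)\ =\ Y_0^{T,x}-\lambda T-Y_0^x-L, \]
and the absolute value of the right-hand side is bounded by $C(1+|x|^\mu)\erm^{-\nu T}$ by Theorem~\ref{thm:LTB}. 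Taking $\mu=2$ (admissible since $r_2<2\eta_2$ here, and $g$, $v$ having at most quadratic growth) yields the bound $C(1+|x|^2)\erm^{-\nu T}$ claimed in the statement.

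There is essentially no analytic obstacle left once the first step is carried out: the argument is a two-line combination of already proved results, the only point requiring care being the verification that $(\psi_0,g)$ meets \emph{all} hypotheses of Theorems~\ref{thm:1st_behav} and~\ref{thm:LTB} — in particular the smallness of $\sqrt{r_2}\|R\|_\infty\|\sigma^{-1}\|_\infty$ relative to $\eta_2$, which is precisely what makes the exponential rate $\nu$ available. I would present the proof in the order above: (i) reduce to the BSDE/EBSDE framework, (ii) first inequality via Theorem~\ref{thm:1st_behav}, (iii) exponential estimate via Theorem~\ref{thm:LTB}.
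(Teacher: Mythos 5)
Your proof is correct and follows essentially the same route as the paper, whose own proof is just the one-line remark that the theorem is a straightforward consequence of the three lemmas of this subsection together with Theorem~\ref{thm:1st_behav} and Theorem~\ref{thm:LTB} — exactly the combination you spell out (inequality $J^T(x,a)\geq Y_0^{T,x}$ plus $Y_0^{T,x}/T\to\lambda$, then the equalities $J^T(x,\overline{a}^T)=Y_0^{T,x}$, $J(x,\overline{a})=\lambda$ plus the exponential estimate). Your silent adjustment of the sign, reading the quantity as $J^T(x,\overline{a}^T)-J(x,\overline{a})T-\left(Y_0^x+L\right)=Y_0^{T,x}-\lambda T-Y_0^x-L$, is the intended meaning of the (typo-ridden) statement, and your remark on taking the growth exponent $\mu=2$ to match the $C(1+|x|^2)$ bound is consistent with the paper's standing assumptions.
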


\begin{proof}~\\
This is a straightforward consequence of the previous lemmas and Theorem \ref{thm:LTB}.
\end{proof}

\begin{rmq}
All the results of this subsection can be rephrased in terms of viscosity solution of PDEs (\ref{eq:EPDE}) and (\ref{eq:PDE}).
\end{rmq}

\subsection{Large time behaviour of viscosity solution of HJB equation}

We consider the ergodic PDE:
\begin{equation}
\Lcal v(x) + \psi\left(x, \nabla v(x) \sigma(x) \right) - \lambda = 0,
\label{eq:EPDE}
\end{equation}
where $\Lcal$ is the generator of the Kolmogorov semigroup of $X^x$, solution of (\ref{eqintro:SDE}). We recall that the couple $(v,\lambda)$ is a viscosity subsolution (resp. supersolution) if:
\begin{itemize}
\item $v : \IR^d \rightarrow \IR$ is a continuous function with polynomial growth;
\item for any function $\phi \in \Ccal^2\left(\IR^d, \IR\right)$, for every $x \in \IR^d$ of local maximum (resp. minimum) of $v-\phi$:
\[ \Lcal \phi(x) + \psi(x, \nabla \phi(x) \sigma(x)) - \lambda \geq 0 \ \ \ \text{(resp. $\leq 0$)}.\]
\end{itemize}

\begin{prop}[Existence of ergodic viscosity solution]
Under Assumption \ref{ass_EBSDE}, the couple $\left(\vbar, \lambdabar\right)$ obtained with the solution given in Theorem \ref{thm:existence_EBSDE} is a viscosity solution of equation (\ref{eq:EPDE}).
\end{prop}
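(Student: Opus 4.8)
The plan is to deduce the viscosity property from the representation $\Ybar_t^x = \vbar(X_t^x)$ together with the comparison theorem for BSDEs whose driver is Lipschitz in $z$, in the classical spirit of \cite{EPQ97}; recall that here $z \mapsto \psi(x,z)$ is Lipschitz with constant $K_z\|\sigma^{-1}\|_\infty$ (remark following Assumption \ref{ass_EBSDE}) and that the extra constant $-\lambdabar$ in the ergodic driver plays no role in these estimates. By Theorem \ref{thm:existence_EBSDE}, $\vbar$ is continuous with quadratic growth, so the regularity requirement in the definition of a viscosity solution already holds, and it remains to establish the sub- and supersolution inequalities; I will only treat the subsolution case, the other being obtained by reversing all inequalities.

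First I would take $\phi \in \Ccal^2(\IR^d,\IR)$ and a local maximum point $x_0$ of $\vbar - \phi$; adding a constant to $\phi$ one may assume $\vbar(x_0) = \phi(x_0)$, so that $\vbar \leq \phi$ on some ball $\overline{\Bcal}(x_0,\delta)$. Assume, for a contradiction, that $\Lcal\phi(x_0) + \psi(x_0,\nabla\phi(x_0)\sigma(x_0)) - \lambdabar < 0$; shrinking $\delta$ and using the continuity of $\Lcal\phi$, $\nabla\phi\,\sigma$ and $\psi$, there is $\epsilon > 0$ with $\Lcal\phi(x) + \psi(x,\nabla\phi(x)\sigma(x)) - \lambdabar \leq -\epsilon$ on $\overline{\Bcal}(x_0,\delta)$. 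Introduce the bounded stopping time $\tau = \inf\{t \geq 0 : |X_t^{x_0} - x_0| \geq \delta\} \wedge 1$, which satisfies $\tau > 0$ $\IP$-a.s.\ and keeps $X_s^{x_0}$ in $\overline{\Bcal}(x_0,\delta)$ for $s \leq \tau$. On $[0,\tau]$ consider the BSDE
\[ \Yhat_t = \phi(X_\tau^{x_0}) + \int_t^\tau \bigl[\psi(X_s^{x_0},\Zhat_s) - \lambdabar\bigr]\,\drm s - \int_t^\tau \Zhat_s\,\drm W_s, \qquad 0 \leq t \leq \tau, \]
which has a unique solution since the terminal value, $s\mapsto\psi(X_s^{x_0},0)$ are bounded and the driver is Lipschitz in $z$. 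By optional sampling applied to the EBSDE (\ref{eq:EBSDE}) on the deterministic horizon $[0,1]$ and by uniqueness, $\bigl(\vbar(X_{\cdot\wedge\tau}^{x_0}), \Zbar_\cdot^{x_0}\ind_{[0,\tau]}\bigr)$ is the solution of the same equation with terminal value $\vbar(X_\tau^{x_0}) \leq \phi(X_\tau^{x_0})$, so the comparison theorem gives $\vbar(x_0) = \Ybar_0^{x_0} \leq \Yhat_0$. On the other hand, Itô's formula applied to $s \mapsto \phi(X_s^{x_0})$ shows that $\phi(X_\cdot^{x_0})$ solves, on $[0,\tau]$, a BSDE with the same terminal value $\phi(X_\tau^{x_0})$ but with driver $\psi(X_s^{x_0},z) - \lambdabar - A_s$, where $A_s := \Lcal\phi(X_s^{x_0}) + \psi(X_s^{x_0},\nabla\phi(X_s^{x_0})\sigma(X_s^{x_0})) - \lambdabar \leq -\epsilon$ for $s < \tau$; since this driver dominates $\psi(X_s^{x_0},z) - \lambdabar$ with strict inequality on the set $\{s < \tau\}$, which has positive $\drm s \otimes \drm\IP$-measure, the strict comparison theorem gives $\phi(x_0) > \Yhat_0$. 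Hence $\vbar(x_0) \leq \Yhat_0 < \phi(x_0) = \vbar(x_0)$, a contradiction; therefore $\Lcal\phi(x_0) + \psi(x_0,\nabla\phi(x_0)\sigma(x_0)) - \lambdabar \geq 0$, i.e.\ $(\vbar,\lambdabar)$ is a viscosity subsolution of (\ref{eq:EPDE}). The supersolution inequality follows from the symmetric argument (local minimum, $\vbar \geq \phi$, the strict lower bound $+\epsilon$ in place of $-\epsilon$), so $(\vbar,\lambdabar)$ is a viscosity solution.

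The only genuinely delicate point is the bookkeeping around the random interval $[0,\tau]$: one has to verify that all the localized data are admissible $\Lrm^2$ data — the terminal values $\vbar(X_\tau^{x_0})$, $\phi(X_\tau^{x_0})$ and $s\mapsto\psi(X_s^{x_0},0)$ are bounded because $X^{x_0}$ remains in a ball up to $\tau$, and consequently $\Zbar^{x_0}\ind_{[0,\tau]}$ is square-integrable — so that the (strict) comparison theorem of \cite{EPQ97} truly applies, and that the passage from the EBSDE on deterministic horizons to its stopped form on $[0,\tau]$ is the usual optional-sampling argument. One also has to note that the dependence of $\psi$ on $z$ through $z\sigma(x)^{-1}$ causes no trouble, since on $\overline{\Bcal}(x_0,\delta)$ both $\sigma$ and $\sigma^{-1}$ are bounded and Lipschitz ($\sigma^{-1}$ being bounded everywhere by Assumption \ref{ass_EBSDE}).
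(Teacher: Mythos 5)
Your proof is correct and follows exactly the classical BSDE-to-viscosity-solution argument (localisation by a stopping time, comparison and strict comparison as in \cite{EPQ97}/\cite{Pard96}) that the paper itself invokes: its proof consists of noting the continuity and quadratic growth of $\vbar$ and citing Theorem 4.3 of \cite{Pard96}, so you have simply written out in detail the same adaptation the authors leave to the reader. The bookkeeping you flag (boundedness of the localized data on $[0,\tau]$, square-integrability of $\Zbar^{x_0}\ind_{[0,\tau]}$, Lipschitz continuity of $z\mapsto\psi(x,z)$ with constant $K_z\left\|\sigma^{-1}\right\|_\infty$) is handled correctly.
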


\begin{proof}~\\
Note that we already know that $\vbar$ is continuous and has quadratic growth. The proof of this result is classical and can easily be adapted from Theorem 4.3 of \cite{Pard96}.
\end{proof}

\begin{prop}[Uniqueness of ergodic viscosity solution]
Let $p > 0$; we suppose that $\sqrt{r_2} K_z \left\|\sigma^{-1}\right\|_\infty + [(p \vee 2)-1] \dfrac{r_2}{2} < \eta_2$ and that Assumption \ref{ass_EBSDE} holds true.\\
Then uniqueness holds for viscosity solutions $(v,\lambda)$ of equation (\ref{eq:EPDE}) in the class of viscosity solutions such that $\exists a \in \IR^d, v(a) = \vtilde(a)$ and $v$ and $\vtilde$ have polynomial growth of at most degree $p$.
\end{prop}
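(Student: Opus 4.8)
The plan is to derive the statement from the uniqueness results already established for the EBSDE, namely Theorem~\ref{thm:lambda_uniq} (uniqueness of $\lambda$) and Theorem~\ref{thm:uniq_EBSDE} (uniqueness of $v$ and $\zeta$). Let $(v,\lambda)$ and $(\vtilde,\lambdatilde)$ be two viscosity solutions of (\ref{eq:EPDE}) with polynomial growth of degree at most $p$, and assume $v(a)=\vtilde(a)$ for some $a\in\IR^d$. The first and main task is a nonlinear Feynman--Kac representation: any such viscosity solution of (\ref{eq:EPDE}) has to come from a solution of the EBSDE (\ref{eq:EBSDE}). To establish this I would fix $T>0$ and set $\bar u(t,x)=v(x)+\lambda(T-t)$; a direct verification on the definition of viscosity solution shows that $\bar u$ is a polynomial-growth viscosity solution of the time-dependent problem $-\partial_t\bar u=\Lcal\bar u+\psi(x,\nabla\bar u\,\sigma(x))$ on $[0,T]\times\IR^d$ with terminal condition $\bar u(T,\cdot)=v$. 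Since $\psi$ is Lipschitz with respect to $z$ and continuous with respect to $x$ and the forward equation (\ref{eq:SDE}) satisfies the moment bounds of Proposition~\ref{majo_X_Sp}, the uniqueness theory for finite-horizon BSDEs and their associated semilinear PDEs in the class of polynomial-growth functions (see \cite{EPQ97}, \cite{Pard96}, or else a smooth-approximation argument in the spirit of the proofs above) identifies $\bar u(t,X_t^x)$ with the value $Y_t^{T,x}$ of the finite-horizon BSDE with terminal datum $v(X_T^x)$: precisely, $Y_t^{T,x}=v(X_t^x)+\lambda(T-t)$, $\IP$-a.s.

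Writing then $Y_t^x:=v(X_t^x)$ and $Z_t^x:=Z_t^{T,x}$ on $[0,T]$, the shift by the constant $\lambda$ turns the finite-horizon BSDE into the EBSDE (\ref{eq:EBSDE}) on $[0,T]$; uniqueness of the finite-horizon solution makes the $Z^{T,x}$ consistent as $T$ increases, so $Z^x\in\Lrm^2_{\Pcal,\loc}\left(\Omega,\Lrm^2\left(0,\infty;\left(\IR^d\right)^*\right)\right)$ is well defined and $\left(Y^x,Z^x,\lambda\right)$ solves (\ref{eq:EBSDE}) for all $0\le t\le T<\infty$. Moreover $\left|Y_t^x\right|=\left|v\left(X_t^x\right)\right|\le C\left(1+\left|X_t^x\right|^p\right)$, and, exactly as in Step~4 of the proof of Theorem~\ref{thm:existence_EBSDE} (approximate $v$ by $\Ccal^1$ functions, pass to the $\Lrm^2$-limit of the control processes, and invoke \cite{MZ02}), one obtains a measurable $\zeta$ with $Z_t^x=\zeta\left(X_t^x\right)$. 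The same construction applied to $(\vtilde,\lambdatilde)$ produces a measurable $\zetatilde$ and a solution $\left(\vtilde\left(X_t^x\right),\zetatilde\left(X_t^x\right),\lambdatilde\right)$ of (\ref{eq:EBSDE}).

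With the representation at hand the conclusion is immediate. Applying Theorem~\ref{thm:lambda_uniq} to $\left(v\left(X_t^x\right),\zeta\left(X_t^x\right),\lambda\right)$, whose $Y$-component has polynomial growth of degree $p$, gives $\lambda=\lambdabar$; the same applied to $\vtilde$ gives $\lambdatilde=\lambdabar$, hence $\lambda=\lambdatilde$. Next, subtracting a constant does not affect the dynamics of (\ref{eq:EBSDE}), so the triplets $\left((v-v(0))\left(X_t^x\right),\zeta\left(X_t^x\right),\lambda\right)$ and $\left((\vtilde-\vtilde(0))\left(X_t^x\right),\zetatilde\left(X_t^x\right),\lambda\right)$ both solve (\ref{eq:EBSDE}), their $Y$-components are continuous, of polynomial growth of degree at most $p$, and vanish at $0$, and their $\zeta$-parts are measurable; Theorem~\ref{thm:uniq_EBSDE} then yields $v-v(0)=\vtilde-\vtilde(0)$, so $v-\vtilde$ is a constant, which must equal $0$ since $v(a)=\vtilde(a)$. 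This gives $v\equiv\vtilde$ and $\lambda=\lambdatilde$, as claimed.

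The main obstacle is the representation in Step~1: identifying a polynomial-growth (hence unbounded) viscosity solution of the time-dependent HJB equation with the probabilistic solution of the associated finite-horizon BSDE, that is, uniqueness of viscosity solutions in a class of unbounded functions, together with the construction of a measurable version $\zeta$ of the associated control process. This is exactly where the Lipschitz dependence of $\psi$ on $z$ and the moment estimates of Proposition~\ref{majo_X_Sp} are needed. Once this step is secured, the two subsequent steps are direct applications of Theorems~\ref{thm:lambda_uniq} and \ref{thm:uniq_EBSDE}.
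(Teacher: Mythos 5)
Your route is essentially the paper's: identify the viscosity solution with the finite-horizon Markovian BSDE so that $v(x)=Y_0^{T,x}$ (the paper does this for the BSDE with driver $\psi-\lambda$ and terminal datum $v\left(X_T^x\right)$, invoking Lemma 3.18 of \cite{HM16} for the comparison in the polynomial-growth class --- the same analytic input you flag as your main obstacle), then obtain $\lambda=\lambdatilde$ by a linearisation/Girsanov argument together with Proposition \ref{prop:sup_fini} (your detour through Theorem \ref{thm:lambda_uniq} and $\lambdabar$ is the same computation, and its hypothesis is exactly the one assumed here), and finally conclude $v=\vtilde$ by the argument of Theorem \ref{thm:uniq_EBSDE}; your normalisation trick (subtract $v(0)$ and $\vtilde(0)$, then use $v(a)=\vtilde(a)$) is fine.

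The one step that does not hold up as written is your justification of the representation $Z_t^x=\zeta\left(X_t^x\right)$ with $\zeta$ measurable and time-independent, which you need in order to invoke Theorem \ref{thm:uniq_EBSDE} as a black box. Step 4 of the proof of Theorem \ref{thm:existence_EBSDE} does not transfer: there $\zetabar$ is obtained as an a.s.\ limit of the functions $\zeta^{\alpha'_n}$ attached to the $\alpha$-discounted BSDEs of Lemma \ref{lem:EU_BSDE}, objects which exist for the solution constructed in that theorem but are not available for an arbitrary viscosity solution $v$. For a general $v$ you would need a separate argument (the Markovian representation $Z_t^{T,x}=\zeta_T\left(t,X_t^x\right)$ from \cite{EPQ97}/\cite{MZ02}, plus a consistency and time-homogeneity argument to remove the dependence on $(t,T)$ up to null sets), or --- simpler, and what the paper actually does by writing ``applying the same argument as that in Theorem \ref{thm:uniq_EBSDE}'' --- do not apply the statement of that theorem but rerun its proof at the level of the processes $\left(v\left(X_t^x\right),Z_t^{T,x}\right)$: an inspection shows that proof never uses that the control process is a function of $X^x$, only that the $Y$-components are continuous functions of $X^x$ with polynomial growth. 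With that repair your argument coincides with the paper's.
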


\begin{proof}~\\
The proof is quite the same as in Lemma 3.18 of \cite{HM16}. Let $(v,\lambda)$ and $(\vtilde, \lambdatilde)$ be two viscosity solutions of (\ref{eq:EPDE}). We fix $T > 0$, and we consider the solution $\left(Y^{T,x}, Z^{T,x}\right)$ of the BSDE
\[ Y_t^{T,x} = v\left(X_T^x\right) + \int_t^T \left\{ \psi\left(X_s^x, Z_s^{T,x}\right) - \lambda\right\} \, \drm s - \int_t^T Z_s^{T,x} \, \drm W_s, \ \ \ t \in [0,T].\]
Because this BSDE has a unique solution, we can claim that $v(x) = Y_0^{T,x}$. We define a couple $\left(\Ytilde^{T,x}, \Ztilde^{T,x}\right)$ by replacing in the previous equation $(v,\lambda)$ by $(\vtilde, \lambdatilde)$.
Then, for any $T > 0$ and $x \in \IR^d$:
\[ \left(v-\vtilde\right)(x) = \left(v-\vtilde\right)\left(X_T^x\right) + \int_0^T \left\{ \psi\left(X_s^x, Z_s^{T,x}\right) - \psi\left(X_s^x, \Ztilde_s^{T,x}\right) \right\} \, \drm s - \left(\lambda-\lambdatilde\right) T - \int_0^T \left\{ Z_s^{T,x} - \Ztilde_s^{T,x}\right\} \, \drm W_s.\]
We set
\[ \beta_s = 
  \left\{ \begin{array}{ll} 
          \dfrac{\psi\left(X_s^x, Z_s^{T,x}\right) - \psi\left(X_s^x, \Ztilde_s^{T,x}\right)}{\left|Z_s^{T,x} - \Ztilde_s^{T,x}\right|^2} \left(Z_s^{T,x} - \Ztilde_s^{T,x}\right)^*, & \text{if } Z_s^{T,x} \neq \Ztilde_s^{T,x}, \\
          0, & \text{otherwise.}
          \end{array} \right.\]
Since the process $\beta$ is bounded by $K_z \left\|\sigma^{-1}\right\|_\infty$, by Girsanov's theorem, there exists a new probability measure $\IQ^T$ equivalent to $\IP$ and under which $W - \int_0^\bullet \beta_s \, \drm s$ is a Brownian motion. Then:
\[ \frac{\left(v-\vtilde\right)(x)}{T} = \frac{\IE^{\IQ^T}\left[\left(v-\vtilde\right)\left(X_T^x\right)\right]}{T} - \left(\lambda-\lambdatilde\right).\]
Thanks to proposition \ref{prop:sup_fini} and the polynomial growth of $v$ and $\vtilde$, letting $T \rightarrow \infty$ gives us $\lambda = \lambdatilde$.
Applying the same argument as that in Theorem \ref{thm:uniq_EBSDE}, we deduce the uniqueness claimed.
\end{proof}

We recall the Hamilton-Jacobi-Bellman equation:
\begin{equation}
 \left\{ \begin{array}{ll}
           \partial_t u(t,x) + \Lcal u(t,x) + \psi\left(x,\nabla u(t,x) \sigma(x)\right) = 0 & \forall (t,x) \in \IR_+ \times \IR^d, \\
           u(T,x) = g(x) & \forall x \in \IR^d,
           \end{array} \right.
\label{eq:PDE}
\end{equation}
whose viscosity solution is linked to the BSDE (\ref{eq:BSDE_ceo}) via $Y_t^{T,x} = u\left(T-t, X_t^x\right)$. We can rephrase Theorem \ref{thm:LTB} as:

\begin{thm}
We consider the equations (\ref{eq:EPDE}) and (\ref{eq:PDE}). We suppose that Assumption \ref{ass_EBSDE} holds true; moreover, we assume that $g : \IR^d \rightarrow \IR^d$ satisfies $\forall x,x' \in \IR^d,\, |g(x)| \leq C\left(1+|x|^\mu\right)$ and $|g(x)-g(x')| \leq C\left(1+|x|^\mu+|x'|^\mu\right)|x-x'|$ with $\dst \sqrt{r_2} K_z \left\|\sigma^{-1}\right\|_\infty + \frac{\mu-1}{2}r_2 < \eta_2$ and $\mu \geq 2$.\\
Then, there exists $L \in \IR$, such that: $\forall x \in \IR^d,\, u(T,x) - \lambda T - v(x) \underset{T \rightarrow \infty}{\longrightarrow} L$. Furthermore,
\[ \forall x \in \IR^d,\, \forall T > 0,\, \left|u(T,x)-\lambda T - v(x) - L\right| \leq C\left(1+|x|^\mu\right)\erm^{-\nu T}.\]
\end{thm}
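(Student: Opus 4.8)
The plan is to recognise this statement as the PDE reformulation of Theorem \ref{thm:LTB}, reached through the nonlinear Feynman--Kac correspondence between parabolic HJB equations and finite-horizon BSDEs, and through the analogous correspondence between the ergodic equation and the EBSDE. First I would record the two identifications at the initial time. Under Assumption \ref{ass_EBSDE} and with terminal datum $\xi^T = g(X_T^x)$ --- which satisfies $|\xi^T| \leq C(1+|X_T^x|^\mu)$ by the growth hypothesis on $g$, placing us exactly in the hypotheses of Theorem \ref{thm:LTB} and making Proposition \ref{prop:sup_fini} available --- the finite-horizon BSDE (\ref{eq:BSDE_ceo}) has a unique solution $(Y^{T,x}, Z^{T,x})$ which, by Theorem 4.1 of \cite{EPQ97}, is represented as $Y_t^{T,x} = u(T-t, X_t^x)$, where $u$ is the viscosity solution of (\ref{eq:PDE}); evaluating at $t = 0$ with $X_0^x = x$ gives $u(T,x) = Y_0^{T,x}$. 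Likewise, by Theorem \ref{thm:existence_EBSDE} together with the uniqueness results (Theorems \ref{thm:lambda_uniq} and \ref{thm:uniq_EBSDE}), the viscosity solution $v$ of the ergodic equation (\ref{eq:EPDE}) coincides with $\vbar$, satisfies $Y_t^x = v(X_t^x)$ for the EBSDE solution, and hence $v(x) = Y_0^x$.

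Then I would simply invoke Theorem \ref{thm:LTB}: it furnishes a constant $L \in \IR$ with $Y_0^{T,x} - \lambda T - Y_0^x \longrightarrow L$ as $T \to \infty$ for every $x \in \IR^d$, together with the estimate $|Y_0^{T,x} - \lambda T - Y_0^x - L| \leq C(1+|x|^\mu)\erm^{-\nu T}$ for all $T > 0$. Substituting the two identities $u(T,x) = Y_0^{T,x}$ and $v(x) = Y_0^x$ into both statements immediately yields $u(T,x) - \lambda T - v(x) \longrightarrow L$ as well as the announced exponential rate, with the same constants $L$, $C$ and $\nu$. No additional estimates are required beyond those already contained in Theorem \ref{thm:LTB}.

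The only genuinely non-mechanical point --- and the one I would argue with care --- is the Feynman--Kac identification $u(T,x) = Y_0^{T,x}$ itself: it relies on the parabolic HJB equation (\ref{eq:PDE}) having a unique viscosity solution in the class of functions with at most $\mu$-polynomial growth, so that the probabilistic candidate built from the BSDE really is $u$. This is classical (see \cite{EPQ97}) once one notes that $Y_0^{T,x}$ has polynomial growth controlled uniformly in $T$, which follows from Proposition \ref{prop:sup_fini} and the estimates underlying Theorem \ref{thm:1st_behav}; the corresponding uniqueness for the ergodic equation was already dealt with in the preceding subsection. Everything else is a direct transcription of Theorem \ref{thm:LTB}.
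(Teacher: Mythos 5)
Your proposal is correct and follows essentially the same route as the paper: the paper presents this theorem precisely as a rephrasing of Theorem \ref{thm:LTB}, obtained through the identifications $u(T,x)=Y_0^{T,x}$ (via the viscosity/Feynman--Kac link $Y_t^{T,x}=u(T-t,X_t^x)$ for the BSDE (\ref{eq:BSDE_ceo})) and $v(x)=Y_0^x$ from the EBSDE together with the uniqueness results of Section 4. Your additional care about the growth of $g$ placing you in the hypotheses of Theorem \ref{thm:LTB} and about uniqueness in the polynomial-growth class is exactly the (implicit) content of the paper's argument, so nothing is missing.
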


\bibliographystyle{alpha}
\bibliography{bibli}

\appendix

\section{Appendix about the Lipschitz continuity of $v^\alpha$}

We consider the forward-backward system, where $X$ is a vector in $\IR^d$, $Y$ a real, $Z$ a row with $d$ real coefficients, and $W$ a Brownian motion in $\IR^d$:
\begin{equation}
X_s^{t,x} = x + \int_t^s{b\left(X_r^{t,x}\right) \, \drm r} + \int_t^s{\sigma\left(X_r^{t,x}\right) \, \drm W_r},
\label{SDE_app}
\end{equation}
\begin{equation}
Y_s^{t,x} = g\left(X_T^{t,x}\right) + \int_s^T{f\left(X_r^{t,x}, Y_r^{t,x}, Z_r^{t,x}\right) \, \drm r} - \int_s^T{Z_r^{t,x} \, \drm W_r}.
\label{BSDE_app}
\end{equation}
The first result is an adaptation to our case of the Theorem 3.1 of \cite{MZ02}.

\begin{prop}
We make the following assumptions:
\begin{itemize}
\item $\sigma \in \Ccal^1\left(\IR^d, \GL_d(\IR)\right)$ and $b \in \Ccal^1\left(\IR^d, \IR^d\right)$ have bounded derivatives;
\item the function $\sigma(\bullet)^{-1}$ is bounded;
\item $g \in \Ccal^1\left(\IR^d,\IR\right)$ has bounded derivatives;
\item if we define $h : (x,y,p) \mapsto f(x,y,p\sigma(x))$, then $h \in \Ccal^1\left(\IR^d \times \IR \times \left(\IR^d\right)^*, \IR\right)$ has bounded derivatives (the bounds of the derivatives of $h$ will be denoted $H_x$, $H_y$ and $H_p$ w.r.t. $x$, $y$ and $p$).
\end{itemize}
Under those assumptions, we already know that the forward-backward system (\ref{SDE_app})-(\ref{BSDE_app}) admits a unique solution $\left(Y^{t,x}, Z^{t,x}\right)$.\\
Moreover, we assume:
\begin{itemize}
\item $\forall x \in \IR^d,\, |f(x,0,0)| \leq M_f\left(1+|x|^\mu\right)$;
\item $\dst \forall p \geq 2,\, \forall x \in \IR^d, \sup_{|\varepsilon| \leq 1} \IE\left[\left|Y^{t,x+\varepsilon}\right|^{*,p}_{t,T}\right] < \infty$.
\end{itemize}
Then, the function $u : (t,x) \mapsto Y_t^{t,x}$ is of class $\Ccal^1$ w.r.t. $x$ and 
\[ \forall s \in [t,T],\, Z_s^{t,x} = \partial_x u\left(s,X_s^{t,x}\right) \sigma\left(X_s^{t,x}\right) \ \IP\text{-a.s.}\]
\end{prop}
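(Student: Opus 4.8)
The plan is to adapt the proof of Theorem~3.1 of \cite{MZ02}, the only genuinely new points being the polynomial growth of $f(\cdot,0,0)$ and the fact that $f$ becomes Lipschitz in its last argument only after the change of variable $z\mapsto z\sigma(x)^{-1}$, which is why regularity has been imposed on $h$ rather than on $f$. First I would establish differentiability of the forward flow. Since $b$ and $\sigma$ are $\Ccal^1$ with bounded derivatives, $x\mapsto X_s^{t,x}$ is $\Ccal^1$ (as a map into every $\Lrm^p$, with an a.s.\ continuous modification), the variational process $\nabla X^{t,x}$ solving the linear matrix SDE
\[ \drm\nabla X_s^{t,x} = \partial_x b\left(X_s^{t,x}\right)\nabla X_s^{t,x}\,\drm s + \partial_x\sigma\left(X_s^{t,x}\right)\nabla X_s^{t,x}\,\drm W_s,\qquad \nabla X_t^{t,x} = I_d, \]
and, via a companion SDE, $\nabla X_s^{t,x}$ is invertible with $\sup_{s\in[t,T]}\IE\left[\left|\nabla X_s^{t,x}\right|_F^p + \left|\left(\nabla X_s^{t,x}\right)^{-1}\right|_F^p\right]$ finite for every $p$, locally uniformly in $x$ (Gronwall and Burkholder--Davis--Gundy). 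I would also record, using that $b$ and $\sigma$ have linear growth, the moment bounds $\IE\left[\left|X^{t,x}\right|^{*,p}_{t,T}\right]\leq C\left(1+|x|^p\right)$.

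Next I would set up the variational BSDE: differentiating (\ref{BSDE_app}) in a direction $\xi\in\IR^d$, the pair $\left(\nabla Y^{t,x}\xi,\nabla Z^{t,x}\xi\right)$ should solve the linear BSDE with terminal value $\partial_x g\left(X_T^{t,x}\right)\nabla X_T^{t,x}\xi$ and driver $\partial_x f(\Theta_r)\nabla X_r^{t,x}\xi + \partial_y f(\Theta_r)\nabla Y_r^{t,x}\xi + \partial_z f(\Theta_r)\nabla Z_r^{t,x}\xi$, where $\Theta_r = \left(X_r^{t,x},Y_r^{t,x},Z_r^{t,x}\right)$. Since $\partial_z f(x,y,z)\cdot w = \partial_p h\left(x,y,z\sigma(x)^{-1}\right)\cdot\left(w\sigma(x)^{-1}\right)$, the coefficient of $\nabla Z$ is bounded by $H_p\left\|\sigma^{-1}\right\|_\infty$; and because $x\mapsto\sigma(x)^{-1}$ has bounded derivative, $\left|\partial_x f(\Theta_r)\right|\leq C\left(1+\left|Z_r^{t,x}\right|\right)$, so the forcing term lies in $\Lrm^2(\Omega\times[t,T])$ as soon as $\IE\left[\left(\int_t^T\left|Z_s^{t,x}\right|^2\,\drm s\right)^{q}\right] < \infty$ for $q$ large enough; this last bound follows from the standard a priori $\Lrm^p$ estimate for BSDEs together with the polynomial growth of $f(\cdot,0,0)$ and the assumed moment bounds on $Y^{t,x+\varepsilon}$ and on $X^{t,x}$. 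Hence the linear BSDE has a unique solution in the natural weighted space.

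Then I would prove convergence of the difference quotients. From the previous step $\frac1h\left(X^{t,x+h\xi}-X^{t,x}\right)\to\nabla X^{t,x}\xi$ in $\Scal^p$; plugging this into the equations and using the a priori stability estimates for BSDEs, the pointwise convergence of the difference quotients of the ($\Ccal^1$) coefficients $g$, $h$, $\sigma$ and their domination (the bounded-derivative hypotheses provide the dominating functions, and the moment bounds on $Y^{t,x+\varepsilon}$ ensure uniform integrability), one obtains $\frac1h\left(Y^{t,x+h\xi}-Y^{t,x}\right)\to\nabla Y^{t,x}\xi$ in $\Scal^2$ and $\frac1h\left(Z^{t,x+h\xi}-Z^{t,x}\right)\to\nabla Z^{t,x}\xi$ in $\Lrm^2(\Omega\times[t,T])$. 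Evaluating at $s=t$ and noting that $X_t^{t,x}=x$ is deterministic, so that $u(t,x)=Y_t^{t,x}$ and $\nabla Y_t^{t,x}\xi$ are deterministic, yields that $u(t,\cdot)$ is differentiable with $\partial_x u(t,x)\xi = \nabla Y_t^{t,x}\xi$; continuity of $\partial_x u$ in $x$ follows from the continuous dependence of $\nabla Y^{t,x}$ on $x$ in $\Scal^2$, obtained by yet another stability estimate.

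Finally, for the representation of $Z$: by uniqueness for the forward--backward system and the flow property $X_s^{t,x} = X_s^{\theta,X_\theta^{t,x}}$ we get $Y_s^{t,x} = u\left(s,X_s^{t,x}\right)$ $\IP$-a.s.\ for $t\le s\le T$, and hence $\nabla Y_s^{t,x} = \partial_x u\left(s,X_s^{t,x}\right)\nabla X_s^{t,x}$, i.e.\ $\partial_x u\left(s,X_s^{t,x}\right) = \nabla Y_s^{t,x}\left(\nabla X_s^{t,x}\right)^{-1}$. It then remains to show $Z_s^{t,x} = \nabla Y_s^{t,x}\left(\nabla X_s^{t,x}\right)^{-1}\sigma\left(X_s^{t,x}\right)$ a.e. As $u$ is only $\Ccal^{0,1}$ and not $\Ccal^{1,2}$ in $(s,x)$, Itô's formula is not directly available, so I would close the argument by mollifying $g$ and $h$ into $\Ccal^\infty$ functions $g_n$, $h_n$ with uniformly bounded first derivatives, converging to $g$, $h$ locally uniformly together with their gradients. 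For such smooth data the associated semilinear PDE $\partial_t u_n + \Lcal u_n + f_n(x,u_n,\nabla u_n\,\sigma) = 0$ with $u_n(T,\cdot)=g_n$ (where $f_n(x,y,z) = h_n(x,y,z\sigma(x)^{-1})$) has a classical $\Ccal^{1,2}$ solution of at most polynomial growth, using that $\sigma\sigma^*$ is uniformly positive definite; nonlinear Feynman--Kac then gives $Z_s^{n} = \partial_x u_n\left(s,X_s^{t,x}\right)\sigma\left(X_s^{t,x}\right)$, and one passes to the limit using that all the first-order estimates above are uniform in $n$ and in $x$ on compacts, so $u_n\to u$ in $\Ccal^{0,1}_{\mathrm{loc}}$ and $(Y^n,Z^n)\to(Y^{t,x},Z^{t,x})$ in $\Scal^2\times\Lrm^2$; this gives $Z_s^{t,x} = \partial_x u\left(s,X_s^{t,x}\right)\sigma\left(X_s^{t,x}\right)$. (Alternatively one may argue by Malliavin calculus, noting that $X^{t,x}$ and $Y^{t,x}$ are Malliavin differentiable, that $Z_s^{t,x}$ is a version of $D_sY_s^{t,x}$, and that $D_sX_s^{t,x} = \sigma\left(X_s^{t,x}\right)$.) The main obstacle is precisely this last step --- securing the first-order regularity estimates uniformly in the mollification parameter, in the presence of the polynomial growth of $f(\cdot,0,0)$ and of the $z$-dependence entering through $\sigma(x)^{-1}$; everything upstream is routine but careful book-keeping with BSDE a priori estimates.
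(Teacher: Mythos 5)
Your proposal follows essentially the same route as the paper: both adapt Theorem 3.1 of \cite{MZ02} by (i) controlling high moments of $\int_t^T\left|Z_s^{t,x}\right|^2\,\drm s$ from the polynomial growth of $f(\bullet,0,0)$ and the assumed moments of $Y^{t,x+\varepsilon}$, (ii) passing to the limit in the difference-quotient (variational) BSDE whose forcing term is only dominated by $C\left(1+\left|Z_r^{t,x}\right|\right)$ because the Lipschitz structure sits on $h(x,y,p)=f(x,y,p\sigma(x))$, and (iii) obtaining the representation $Z_s^{t,x}=\partial_x u\left(s,X_s^{t,x}\right)\sigma\left(X_s^{t,x}\right)$ by approximating $b$, $\sigma$, $h$, $g$ by smooth functions with uniformly bounded derivatives and passing to the limit. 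The only cosmetic difference is that you invoke classical PDE solvability (or Malliavin calculus) for the smooth case, where the paper simply cites \cite{MZ02} together with the a priori estimates of \cite{Rennes03}; this does not change the substance of the argument.
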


\begin{proof} We will prove the proposition in the case $d=1$, to simplify presentation.
\begin{description}
\item[Step 1:] we have, for all $p \geq 1$:
      \begin{equation}
      \IE\left[\left(\int_t^T \left|Z_r^{t,x}\right|^2 \, \drm r\right)^p\right] \leq C \erm^{2ap(T-t)} \left\{\IE\left[\left|Y^{t,x}\right|^{*,2p}_{t,T}\right] + C_T\left(1+|x|^{2p\mu}\right)\right\},
      \label{eq:Z_Lp}
      \end{equation}
      where $C$ comes from the Lemma 3.1 of \cite{Rennes03} and $C_T$ depends only on $p$, $\mu$, $T$, $T-t$, $r_1$, $\|\sigma\|_\lip$, $|b(0)|$, $\|b\|_\lip$ and $M_f$. The last assumption of our proposition allows us to bound $\dst \IE\left[\left(\int_t^T \left|Z_r^{t,x+\varepsilon}\right|^2 \, \drm r\right)^p\right]$ independently of $\varepsilon$.
\item[Step 2:] we will show that: 
$\dst \forall p \geq 2,\, \forall x \in \IR^d,\, \IE\left[\left|Y^{t,x+\varepsilon}-Y^{t,x}\right|^{*,p}_{t,T} + \left( \int_t^T \left|Z_s^{t,x+\varepsilon}-Z_s^{t,x}\right|^2 \, \drm s\right)^{\frac{p}{2}}\right] \underset{\varepsilon \rightarrow 0}{\longrightarrow} 0$.\\
      Let $(t,x) \in [0,T] \times \IR$ be fixed. We define, for any $\varepsilon \neq 0$,
      \[ \nabla X^\varepsilon = \frac{X^{t,x+\varepsilon}-X^{t,x}}{\varepsilon},\, \nabla Y^\varepsilon = \frac{Y^{t,x+\varepsilon}-Y^{t,x}}{\varepsilon},\, \nabla Z^\varepsilon = \frac{Z^{t,x+\varepsilon}-Z^{t,x}}{\varepsilon}. \]
      We also define the solution of the variational system:
      \begin{equation}
      \nabla X_s = 1 + \int_t^s \partial_x b\left(X_r^{t,x}\right) \nabla X_r \, \drm r + \int_t^s \partial_x \sigma\left(X_r^{t,x}\right) \nabla X_r \, \drm W_r;
      \label{eq:var_SDE}
      \end{equation}
      \begin{equation}
      \nabla Y_s = \partial_x g\left(X_T^{t,x}\right) \nabla X_T + \int_s^T \left[\partial_x f\left(\Theta_r^{t,x}\right) \nabla X_r + \partial_y f\left(\Theta_r^{t,x}\right) \nabla Y_r + \partial_z f\left(\Theta_r^{t,x}\right) \nabla Z_r \right] \, \drm r - \int_s^T \nabla Z_r \, \drm W_r,
      \label{eq:var_BSDE}
      \end{equation}
      where $\Theta$ stands for $(X,Y,Z)$. For every $s \in [t,T]$, we have:
      \begin{equation}
      \nabla Y_s^\varepsilon = g_x^\varepsilon \nabla X_T^\varepsilon + \int_s^T \left[f_x^\varepsilon(r) \nabla X_r^\varepsilon + f_y^\varepsilon(r) \nabla Y_r^\varepsilon + f_z^\varepsilon(r) \nabla Z_r^\varepsilon \right] \, \drm r - \int_s^T \nabla Z_r^\varepsilon \, \drm W_r,
      \label{eq:var_app_BSDE}
      \end{equation} 
      \[\text{where } g_x^\varepsilon = \int_0^1 \partial_x g\left(X_T^{t,x}+w\varepsilon \nabla X_T^\varepsilon\right) \, \drm w,\,
         f_x^\varepsilon(r) = \int_0^1 \partial_x f\left(\Theta_r^{t,x}+w\varepsilon \nabla \Theta_r^\varepsilon\right) \, \drm w,\]
      \[ f_y^\varepsilon(r) = \int_0^1 \partial_y f\left(\Theta_r^{t,x}+w\varepsilon \nabla \Theta_r^\varepsilon\right) \, \drm w \, \text{ and } \,
         f_z^\varepsilon(r) = \int_0^1 \partial_z f\left(\Theta_r^{t,x}+w\varepsilon \nabla \Theta_r^\varepsilon\right) \, \drm w.\]
      In order to apply the proposition 3.2 of \cite{Rennes03} later; we set 
      \[ i(r,y,z) := \int_0^1 \partial_x f\left(\Theta_r^{t,x}+w\varepsilon \nabla \Theta_r^\varepsilon\right) \, \drm w \, \nabla X_r^\varepsilon + \int_0^1 \partial_y f\left(\Theta_r^{t,x}+w\varepsilon \nabla \Theta_r^\varepsilon\right) \, \drm w\, y + \int_0^1 \partial_z f\left(\Theta_r^{t,x}+w\varepsilon \nabla \Theta_r^\varepsilon \right)\, \drm w\, z.\]
      This way, we can check that:
      \[ |i(r,y,z)| \leq \underbrace{\left(H_x+H_p \left(\left|Z_r^{t,x}\right| + \left|Z_r^{t,x+\varepsilon}\right|\right) \left\|\sigma^{-1}\right\|_\lip\right) \left|\nabla X_r^\varepsilon\right|}_{=i_r} + H_y |y| + H_p \left\|\sigma^{-1}\right\|_\infty |z|,\]
      \[ I := \int_t^T i_r \, \drm r \leq C \left(1 + \int_t^T \left|Z_r^{t,x}\right| \, \drm r + \int_t^T \left|Z_r^{t,x+\varepsilon}\right| \, \drm r\right) \left|\nabla X^\varepsilon\right|^*_{t,T},\]
      \[ \IE\left[|I|^p\right] \leq C \left(1+\IE\left[\left(\int_t^T \left|Z_r^{t,x}\right|^2 \, \drm r\right)^p\right]^{\frac{1}{2}} + \IE\left[\left(\int_t^T \left|Z_r^{t,x+\varepsilon}\right|^2 \, \drm r\right)^p\right]^{\frac{1}{2}}\right) \IE\left[\left|\nabla X^\varepsilon\right|^{*,2p}_{t,T}\right]^{\frac{1}{2}}.\]
      By the step 1 and the converence of $\nabla X^\varepsilon$ in $\Scal^p$, $\IE\left[|I|^p\right]$ can clearly be bounded independently of $\varepsilon$. Using the proposition 3.2 of \cite{Rennes03}, we are now able to say that, for $a$ great enough,
      \[ \IE\left[\left|\nabla Y^\varepsilon\right|^{*,p}_{t,T} + \left(\int_t^T \left|\nabla Z_r^\varepsilon\right|^2 \, \drm r\right)^{\frac{p}{2}}\right] \leq C \erm^{ap(T-t)}\left(\left\|\partial_x g\right\|_\infty \IE\left[\left|\nabla X_T^\varepsilon\right|^p\right] + \IE\left[I^p\right]\right).\]
\item[Step 3:] some dominated convergences.
      Thanks to the previous step, for all $p \geq 2$, 
      \[ \IE\left[\left|g_x^\varepsilon - g_x^0\right|^p\right] \underset{\varepsilon \rightarrow 0}{\longrightarrow} 0, \hspace{5mm}
         \IE\left[\left(\int_t^T \left|f_x^\varepsilon(r)-f_x^0(r)\right|^2 \, \drm r\right)^{\frac{p}{2}}\right]\underset{\varepsilon \rightarrow 0}{\longrightarrow} 0,\]
      \[ \IE\left[\int_t^T \left|f_y^\varepsilon(r)-f_y^0(r)\right|^p \, \drm r\right]\underset{\varepsilon \rightarrow 0}{\longrightarrow} 0, \hspace{5mm}
         \IE\left[\int_t^T \left|f_z^\varepsilon(r)-f_z^0(r)\right|^p \, \drm r \right]\underset{\varepsilon \rightarrow 0}{\longrightarrow} 0.\]
      The first, third and fourth convergences are easy to prove since $f$ have bounded derivatives. Also, the inequality $\left|\partial_x f\left(\Theta_r^{t,x} + w\varepsilon \nabla \Theta_r^\varepsilon\right) - \partial_x f \left(\Theta_r^{t,x}\right)\right| \leq 2 H_x + H_p \left\|\sigma^{-1}\right\|_\lip \left(2 \left|Z_r^{t,x}\right| + \left|Z_r^{t,x+\varepsilon}\right|\right)$ let us use dominated convergence.
\item[Step 4:] we will show that: 
$\dst \forall x \in \IR^d,\, \IE\left[\left|\nabla Y^\varepsilon-\nabla Y\right|^{*,2}_{t,T} + \left( \int_t^T \left|\nabla Z_s^\varepsilon- \nabla Z_s\right|^2 \, \drm s\right)\right] \underset{\varepsilon \rightarrow 0}{\longrightarrow} 0$.\\
      Combining equations (\ref{eq:var_BSDE}) and (\ref{eq:var_app_BSDE}), we get:
      \begin{align*}
      \Delta Y_s^\varepsilon &= g_x^\varepsilon \Delta X_T^\varepsilon + \left(g_x^\varepsilon - g_x^0\right) \nabla X_T + \int_s^T \left[f_x^\varepsilon(r) \nabla X_r^\varepsilon - f_x^0(r) \nabla X_r + f_y^\varepsilon(r) \Delta Y_r^\varepsilon + f_z^\varepsilon(r) \Delta Z_r^\varepsilon + \beta^\varepsilon(r)\right] \, \drm r \\
      &\pushright{- \int_s^T \Delta Z_r^\varepsilon \, \drm W_r,}
      \end{align*}
      where we set $\beta^\varepsilon(s) = \left[f_y^\varepsilon(r)-f_y^0(r)\right] \nabla Y_r + \left[f_z^\varepsilon(r)-f_z^0(r)\right] \nabla Z_r$, $\Delta X_s^\varepsilon = \nabla X_s^\varepsilon -  \nabla X_s$, $\Delta Y_s^\varepsilon = \nabla Y_s^\varepsilon -  \nabla Y_s$ and $\Delta Z_s^\varepsilon = \nabla Z_s^\varepsilon -  \nabla Z_s$. The Lemma 2.2 of \cite{MZ02} gives:
      \[ \IE\left[\left|\Delta Y^\varepsilon\right|^{*,2}_{t,T} + \int_t^T \left|\Delta Z_r^\varepsilon\right|^2 \, \drm r\right] \leq C \IE\left[\left|g_x^\varepsilon \Delta X_T^\varepsilon + \left(g_x^\varepsilon - g_x^0\right) \nabla X_T\right|^2 + \int_t^T \left|f_x^\varepsilon(r) \nabla X_r^\varepsilon - f_x^0(r) \nabla X_r + \beta^\varepsilon(r)\right|^2 \, \drm r\right].\]
      First, $\dst \IE\left[\left|g_x^\varepsilon \Delta X_T^\varepsilon + \left(g_x^\varepsilon - g_x^0\right) \nabla X_T\right|^2\right] \leq 2 \left\|\partial_x g\right\|_\infty \IE\left[\left|\Delta X_T^\varepsilon\right|^2\right] + 2 \IE\left[\left|g_x^\varepsilon-g_x^0\right|^4\right]^{\frac{1}{2}} \IE\left[\left|\nabla X_T\right|^4\right]^{\frac{1}{2}} \underset{\varepsilon \rightarrow 0}{\longrightarrow} 0$,
      by Lemma 2.1 of \cite{MZ02}. By dominated convergence, we get $\dst \IE\left[\int_t^T \left|\beta^\varepsilon(r)\right|^2 \, \drm r\right] \underset{\varepsilon \rightarrow 0}{\longrightarrow} 0$, and then
      \begin{align*}
      \IE\left[\int_t^T \left|f_x^\varepsilon(r) \nabla X_r^\varepsilon - f_x^0(r) \nabla X_r\right|^2 \, \drm r\right] 
      \leq&\  2 \IE\left[\left|\Delta X^\varepsilon\right|^{*,4}_{t,T}\right]^{\frac{1}{2}} \IE\left[\left(\int_t^T \left|f_x^0(r)\right|^2 \, \drm r\right)^2\right]^{\frac{1}{2}} \\
      &+ 2 \IE\left[\left|\nabla X^\varepsilon \right|^{*,4}_{t,T}\right]^{\frac{1}{2}} \IE\left[\left(\int_t^T \left|f_x^\varepsilon(r)-f_x^0(r)\right|^2 \, \drm r\right)^2\right]^{\frac{1}{2}} \underset{\varepsilon \rightarrow 0}{\longrightarrow} 0.
      \end{align*}
\item[Step 5:] we conclude that $\partial_x u$ exists and that $\partial_x u(t,x) = \nabla Y_t^{t,x}$, for all $(t,x)$. See Theorem 3.1 of \cite{MZ02}.
\item[Step 6:] we will show that $\partial_x u$ is continuous. Let $\left(t_i,x_i\right) \in [0,T] \times \IR^d$, $i=1,2$, with $t_1 < t_2$. To simplify, we write:
      \[ \Theta^i = \left(X^i, Y^i, Z^i\right) = \left(X^{t_i,x_i}, Y^{t_i,x_i}, Z^{t_i,x_i}\right),\,\,
         f_x^i(r) = \partial_x f\left(\Theta_r^i\right),\,\,
         f_y^i(r) = \partial_y f\left(\Theta_r^i\right),\,\,
         f_z^i(r) = \partial_z f\left(\Theta_r^i\right),\]
      \[ g_x^i = \partial_x g\left(X_T^i\right),\,\,
         b_x^i(r) = \partial_x b\left(X_r^i\right) \text{ and }\,
         \sigma_x^i(r) = \partial_x \sigma\left(X_r^i\right).\]
      We set $\Deltatilde X_r = \nabla X_r^1 - \nabla X_r^2$, $\Deltatilde Y_r = \nabla Y_r^1 - \nabla Y_r^2$, $\Deltatilde Z_r = \nabla Z_r^1 - \nabla Z_r^2$ and for any function $\varphi$, $\Deltatilde_{12}[\varphi] = \varphi^1 - \varphi^2$.
      \begin{align*}
      & \left|\partial_x u\left(t_1,x_1\right) - \partial_x u\left(t_2,x_2\right)\right| \\
      & = \left|\IE\left[g_x^1\nabla X_T^1 + \int_{t_1}^T\left\{f_x^1(r) \nabla X_r^1 + f_y^1(r) \nabla Y_r^1 + f_z^1(r) \nabla Z_r^1\right\} \, \drm r\right] \right. \\
      &\pushright{ \left.- \IE\left[g_x^2\nabla X_T^2 + \int_{t_2}^T\left\{f_x^2(r) \nabla X_r^2 + f_y^2(r) \nabla Y_r^2 + f_z^2(r) \nabla Z_r^2\right\} \, \drm r\right]\right|}\\
      & \leq \IE\left[\left|g_x^1 \nabla X_T^1 - g_x^2 \nabla X_T^2\right|\right] + \IE\left[\int_{t_1}^{t_2} \left\{ \left|f_x^1(r)\right|\left|\nabla X_r^1\right| + \left|f_y^1(r)\right|\left|\nabla Y_r^1\right| + \left|f_z^1(r)\right|\left|\nabla Z_r^1\right|\right\} \, \drm r\right] \\
      &\pushright{ + \IE\left[\int_{t_2}^T\left\{ \left|f_x^1(r) \Deltatilde X_r\right| + \left|\Deltatilde_{12}\left[f_x\right](r)\right| \left|\nabla X_r^2\right| + \left\|\partial_y f\right\|_\infty \left|\Deltatilde Y_r\right| + \left|\Deltatilde_{12}\left[f_y\right](r)\right|\left|\nabla Y_r^2\right| \right.\right. }\\
      &\hspace{9.5cm} \left. \left. + \left\|\partial_z f\right\|_\infty \left|\Deltatilde Z_r\right| + \left|\Deltatilde_{12}\left[f_z\right](r)\right|\left|\nabla Z_r^2\right|\right\}\, \drm r\right]
      \end{align*}
      First of all, $\dst \IE\left[\int_{t_1}^{t_2} \left\{ \left|f_x^1(r)\right|\left|\nabla X_r^1\right| + \left\|\partial_y f\right\|_\infty \left|\nabla Y_r^1\right| + \left\|\partial_z f\right\|_\infty \left|\nabla Z_r^1\right|\right\} \, \drm r\right] \underset{t_1 \rightarrow t_2}{\longrightarrow} 0$, by dominated convergence, because $\dst \IE\left[\left|\nabla X^1\right|^{*,2}_{t,T} + \left|\nabla Y^1\right|^{*,2}_{t,T} + \int_t^T \left|\nabla Z_s^1\right|^2 \, \drm s + \int_t^T \left|Z_s^1\right|^2 \, \drm s\right] < \infty$. Using Lemma 2.2 of \cite{MZ02},
      \begin{align*}
      \IE & \left[\left|\Deltatilde Y\right|^{*,2}_{t_2,T} + \int_{t_2}^T \left|\Deltatilde Z_s\right|^2 \, \drm s\right] \leq C \left\{ \left\|\partial_x g\right\|_\infty^2 \IE\left[\left|\Deltatilde X_T \right|^2\right] + \IE\left[\left|\Deltatilde_{12}\left[g_x\right]\right|^2 \left|\nabla X_T^2\right|^2\right] \right.\\
      & \left. +\, \IE\left[\int_{t_2}^T \left( \left|f_x^1(r)\right|^2 \left|\Deltatilde X_r\right|^2 + \left|\Deltatilde_{12}\left[f_x\right](r)\right|^2\left|\nabla X_r^2\right|^2 + \left|\Deltatilde_{12}\left[f_y\right](r)\right|^2\left|\nabla Y_r^2\right|^2 + \left|\Deltatilde_{12}\left[f_z\right](r)\right|^2\left|\nabla Z_r^2\right|^2 \right) \, \drm r\right]\right\}.
      \end{align*}
      We can adapt the step 2, we replace $(t,x)$ by $\left(t_2,x_2\right)$ and $x+h$ by $X_{t_2}^{t_1,x_1}$ and get:
      \[ \forall p \geq 2,\, \IE\left[\left|X^1-X^2\right|^{*,p}_{t_2,T} + \left|Y^1-Y^2\right|^{*,p}_{t_2,T} + \left(\int_{t_2}^T  \left|Z_s^1 - Z_s^2\right|^2\, \drm s\right)^{\frac{p}{2}}\right] \longrightarrow 0.\]
      By dominated convergence, for $\varphi$ bounded, $\dst\IE\left[\left|\Deltatilde_{12}[\varphi]\right|^{*,p}_{t_2,T}\right] \longrightarrow 0$. Since $\dst \IE\left[\left(\int_{t_2}^T \left|Z_r^i\right|^2 \, \drm r\right)^3\right] < \infty$, we have $\dst \IE\left[\int_{t_2}^T \left\{\left|f_x^1(r) \Deltatilde X_r\right|^2 + \left|\Deltatilde_{12}\left[f_x\right](r) \nabla X_r^2\right|^2\right\} \, \drm r\right] \longrightarrow 0$. To sum up, we have shown that $\dst \left|\partial_x u\left(t_1,x_1\right) - \partial_x u\left(t_2,x_2\right)\right| \longrightarrow 0$ when $t_1 \rightarrow t_2$ and $x_1 \rightarrow x_2$. We can prove it when $t_2 \rightarrow t_1$ and $x_2 \rightarrow x_1$: $\partial_x u$ is continuous on $[0,T] \times \IR^d$.
\item[Step 7:] to show the relation 
$\dst \forall s \in [t,T],\, Z_s^{t,x} = \partial_x u\left(s,X_s^{t,x}\right) \sigma\left(X_s^{t,x}\right) \ \IP\text{-a.s.}$, we can do the same as in the Theorem 3.1 of \cite{MZ02}. We approximate $b$, $\sigma$, $h$ and $g$ by functions $b^\varepsilon$, $\sigma^\varepsilon$, $h^\varepsilon$ and $g^\varepsilon$ which are of class $\Ccal^\infty$ with bounded derivatives (and the bound is independent of $\varepsilon$), and converge uniformly. Then, $\left(\sigma^\varepsilon\right)^{-1}$ converges uniformly to $\sigma^{-1}$. We define a function $f^\varepsilon$ (we want it to approximate $f$ and to be smooth) by $f^\varepsilon(x,y,z) = h^\varepsilon\left(x,y,z\sigma^\varepsilon(x)^{-1}\right)$ and it satisfies $\left|f^\varepsilon(x,y,z) - f(x,y,z)\right| \leq \left\|h^\varepsilon-h\right\|_\infty + H_p |z| \left\|\left(\sigma^\varepsilon\right)^{-1} - \sigma^{-1}\right\|_\infty$. Like before, results from \cite{Rennes03} are useful to adapt the end of the proof in \cite{MZ02}.
\end{description}
\end{proof}

\begin{lem}
Under the same assumptions, for every $T > t$, we have the following inequality: 
\[ \left|\partial_x u(t,x)\right| \leq \Ccal \left(1 + |x|^\mu + \IE\left[\left|g\left(X_T^{t,x}\right)\right|^4\right]^{\frac{1}{4}}\right),\]
where $\Ccal$ depends on $T$, $T-t$, $H_x$, $H_y$, $H_p$, $M_f$, $\left\|\sigma^{-1}\right\|_\infty$, $|b(0)|$, $|\sigma(0)|$, $\|b\|_\lip$, $\|\sigma\|_\lip$ and $\mu$.
\end{lem}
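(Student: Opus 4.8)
The plan is to represent $\partial_x u(t,x)$ by a Bismut--Elworthy--Li type formula, so that the terminal datum $g$ enters \emph{only} through $g(X_T^{t,x})$ and never through its derivative, and then to bound every term by a structural constant times $1+|x|^\mu+\IE\big[|g(X_T^{t,x})|^4\big]^{1/4}$. By the previous proposition we may use that $u$ is $\Ccal^1$ in $x$, that $\partial_x u(t,x)=\nabla Y_t^{t,x}$, and that $Z_s^{t,x}=\partial_x u(s,X_s^{t,x})\sigma(X_s^{t,x})$, so in particular $Z_s^{t,x}\sigma(X_s^{t,x})^{-1}=\partial_x u(s,X_s^{t,x})$.

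First I would invoke Theorem 4.2 of \cite{MZ02} (already used in Step 4 of the proof of Theorem \ref{thm:LTB}) with the endpoint $T'=T$; since $u(T,\cdot)=g$ this reads
\[
\partial_x u(t,x)=\IE\left[g\big(X_T^{t,x}\big)\,N_T^{t,x}+\int_t^T f\big(X_s^{t,x},Y_s^{t,x},Z_s^{t,x}\big)\,N_s^{t,x}\,\drm s\right],\qquad N_s^{t,x}=\frac{1}{s-t}\left(\int_t^s\big(\sigma(X_r^{t,x})^{-1}\nabla X_r^{t,x}\big)^{*}\,\drm W_r\right)^{*}.
\]
Then I would assemble the moment estimates. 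Itô's formula on $|\nabla X_s^{t,x}|_F^2$ gives $\IE\big[|\nabla X_s^{t,x}|_F^2\big]\le d^2\erm^{\lambda(s-t)}$ with $\lambda=2\|b\|_\lip+\|\sigma\|_\lip^2$, hence by Burkholder--Davis--Gundy and boundedness of $\sigma^{-1}$, $\IE\big[|N_s^{t,x}|^2\big]\le d^2\|\sigma^{-1}\|_\infty^2\,\erm^{\lambda(s-t)}/(s-t)$; Proposition \ref{majo_X_Sp} (whose proof uses only the at most linear growth of $b$ and $\sigma$) gives $\IE\big[|X_s^{t,x}|^{2\mu}\big]\le C(1+|x|^{2\mu})$; and the standard $\Lrm^4$ a priori estimate for Lipschitz BSDEs together with $(\ref{eq:Z_Lp})$ gives $\IE\big[|Y^{t,x}|^{*,4}_{t,T}\big]+\IE\big[(\int_t^T|Z_s^{t,x}|^2\drm s)^2\big]\le C\big(\IE[|g(X_T^{t,x})|^4]+C_T(1+|x|^{4\mu})\big)$. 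Crucially none of these constants involves a derivative of $g$.

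Plugging in, for the terminal term Cauchy--Schwarz and the power--mean inequality $\|\cdot\|_2\le\|\cdot\|_4$ give $|\IE[g(X_T^{t,x})N_T^{t,x}]|\le \big(d\|\sigma^{-1}\|_\infty\erm^{\lambda(T-t)/2}/\sqrt{T-t}\big)\,\IE[|g(X_T^{t,x})|^4]^{1/4}$. For the driver, using $Z_s^{t,x}\sigma(X_s^{t,x})^{-1}=\partial_x u(s,X_s^{t,x})$ and the Lipschitz bounds on $h$,
\[
\big|f(X_s^{t,x},Y_s^{t,x},Z_s^{t,x})\big|\le M_f\big(1+|X_s^{t,x}|^\mu\big)+H_y|Y_s^{t,x}|+H_p\big|\partial_x u(s,X_s^{t,x})\big|;
\]
pairing the first two pieces with $|N_s^{t,x}|$ by Cauchy--Schwarz for each fixed $s$ and integrating (using $\int_t^T(s-t)^{-1/2}\drm s=2\sqrt{T-t}$) bounds their contribution by $\Ccal\big(1+|x|^\mu+\IE[|g(X_T^{t,x})|^4]^{1/4}\big)$ with a structural $\Ccal$.

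The main obstacle is the remaining self-referential term $H_p\int_t^T\IE\big[|\partial_x u(s,X_s^{t,x})|\,|N_s^{t,x}|\big]\drm s$: the weight $|N_s^{t,x}|\sim(s-t)^{-1/2}$ in $\Lrm^2$ is only just integrable, and the crude a priori bound $|\partial_x u(s,y)|\le C_g(1+|y|^\mu)$ obtainable from the variational equation $(\ref{eq:var_BSDE})$ carries the forbidden dependence on $\|\partial_x g\|_\infty$. I would close this by a Grönwall argument with a singular kernel, exactly as in Step 4 of Theorem \ref{thm:LTB}. Put $\Phi(s)=\sup_{y\in\IR^d}|\partial_x u(s,y)|\big/\big(1+|y|^\mu+\IE[|g(X_T^{s,y})|^4]^{1/4}\big)$, which the proposition and the crude a priori estimate make finite and bounded on $[t,T]$. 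Using the flow/Markov property $X_T^{t,x}=X_T^{s,X_s^{t,x}}$ and Jensen's inequality, the weighted terminal norm at the intermediate point $X_s^{t,x}$ is controlled: $\IE\big[\big(1+|X_s^{t,x}|^\mu+\IE[|g(X_T^{s,y})|^4]^{1/4}\big|_{y=X_s^{t,x}}\big)^2\big]^{1/2}\le C\big(1+|x|^\mu+\IE[|g(X_T^{t,x})|^4]^{1/4}\big)$; so dividing the representation by $1+|x|^\mu+\IE[|g(X_T^{t,x})|^4]^{1/4}$ and taking $\sup_x$ yields $\Phi(t)\le a+C\int_t^T\Phi(s)(s-t)^{-1/2}\,\drm s$ with $a$ and $C$ structural. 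Lemma 7.1.1 of \cite{Henry} then gives $\Phi(t)\le\Ccal$, which is precisely the asserted inequality. The delicate points are keeping every constant free of $\|\partial_x g\|_\infty$ (which is what forces the Bismut representation and the singular Grönwall closure rather than the direct variational estimate) and the time-homogeneity bookkeeping in the last display.
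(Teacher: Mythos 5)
Your argument is correct in outline, but it is a genuinely different route from the one the paper takes for this lemma. The paper follows Richou's method: it differentiates the BSDE, removes the $\partial_z f\,\nabla Z$ term by Girsanov and discounts by $\erm^{\int_t^s \partial_y f}$, sets $\widetilde{R}_s = F_s^{t,x}\left(\nabla X_s^{t,x}\right)^{-1}$ (using the SDE for $\left(\nabla X_s^{t,x}\right)^{-1}$ and integration by parts), and shows that $\left|\erm^{\lambda s}\widetilde{R}_s\right|^2$ is a $\IQ$-submartingale for $\lambda$ large; this converts the pointwise quantity $\partial_x u(t,x)=R_t^{t,x}$ into the time-averaged quantity $\IE^{\IQ}\left[\int_t^T\left|\widetilde{R}_s\right|^2\drm s\right]$, which is controlled through $R_s = Z_s^{t,x}\sigma\left(X_s^{t,x}\right)^{-1}$, the Girsanov density, Cauchy--Schwarz, estimate (\ref{eq:Z_Lp}) and the $\Lrm^4$ bound on $Y$ by $\IE\left[\left|g\left(X_T^{t,x}\right)\right|^4\right]^{1/2}$ and $1+|x|^{2\mu}$, with no appearance of $\partial_x g$. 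You instead use the Ma--Zhang representation plus a singular-kernel Gr\"onwall, i.e.\ exactly the machinery the paper deploys in Step 4 of Theorem \ref{thm:LTB} but applied here: the $z$-part of the driver creates the self-referential term, which you absorb by the weighted supremum $\Phi$ and Lemma 7.1.1 of \cite{Henry}, using the crude ($\left\|\partial_x g\right\|_\infty$-dependent) variational estimate only to guarantee finiteness of $\Phi$, and the Markov property plus Jensen to propagate the $g$-moment in the weight. Both routes achieve the essential point (no dependence on $\left\|\partial_x g\right\|_\infty$); the submartingale argument disposes of the $z$-dependence in a single Girsanov step and needs no bootstrap, while your route avoids the computation of the dynamics of $\left(\nabla X_s^{t,x}\right)^{-1}$ and the submartingale verification at the cost of the Gr\"onwall closure. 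Two small points you should make explicit if you write this up: the representation of \cite{MZ02} must be invoked in its form allowing a $y$-dependent driver (here $f$ contains $-\alpha y$, resp.\ a general $\partial_y$-bounded dependence), and the finiteness of your weighted supremum requires checking that the crude variational bound has growth of order $|y|^\mu$ plus the $g$-moment term (it does, by the same (\ref{eq:Z_Lp})-type estimates, but with your weight of exact degree $\mu$ this is the step where the paper, in its analogous argument, prefers a strictly larger exponent $\zeta>\mu$).
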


\begin{proof}~\\
Our proof is based on the ideas developed in Theorem 3.3 of \cite{Richou11}. In the following, $\Theta_r^{t,x} = \left(X_r^{t,x}, Y_r^{t,x}, Z_r^{t,x}\right)$.
We can rewrite the equation verified by $Y^{t,x}$ and $Z^{t,x}$, under differential form:
\begin{equation}
\drm Y_s^{t,x} = -f\left(\Theta_s^{t,x}\right) \, \drm s + \sum_{j=1}^d{\left[Z_s^{t,x}\right]^{(j)} \, \drm W_s^j},
\label{eq:EDSR_approach_diff}
\end{equation}
where $\left[Z_s^{t,x}\right]^{(j)}$ stands for the $j^{\text{th}}$ coefficient of the row $Z_s^{t,x}$.
By differentiation of (\ref{eq:EDSR_approach_diff}) w.r.t. $x$ and discounting method,
\begin{align*}
&\drm\left(\erm^{\int_t^s \partial_y f\left(\Theta_r^{t,x}\right) \, \drm r} \nabla Y_s^{t,x}\right) \\
& \hspace{1cm} = \erm^{\int_t^s \partial_y f\left(\Theta_r^{t,x}\right) \, \drm r}\left( - \partial_x f\left(\Theta_s^{t,x}\right) \nabla X_s^{t,x} \, \drm s - \sum_{j=1}^d \partial_{z_j} f\left(\Theta_s^{t,x}\right) \nabla \left[Z_s^{t,x}\right]^{(j)} \, \drm s + \sum_{j=1}^d{\nabla \left[Z_s^{t,x}\right]^{(j)} \, \drm W_s^j}\right).
\end{align*}
We set, for every $j \in [\![1,d]\!]$, $\drm \widetilde{W}_s^j = \drm W_s^j - \partial_{z_j} f\left(\Theta_s^{t,x}\right) \, \drm s$; $\widetilde{W}$ is a Brownian motion under a new probability denoted $\IQ$. Between $s$ and $T$ (for $s \in [t,T]$), the integral form of this equation is:
\begin{align}
\erm^{\int_t^T \partial_y f\left(\Theta_r^{t,x}\right) \, \drm r} \nabla Y_T^{t,x} &- \erm^{\int_t^s \partial_y f\left(\Theta_r^{t,x}\right) \, \drm r} \nabla Y_s^{t,x} \nonumber \\
&= - \int_s^T \erm^{\int_t^r \partial_y f\left(\Theta_u^{t,x}\right) \, \drm u} \partial_x f\left(\Theta_r^{t,x}\right) \nabla X_r^{t,x} \, \drm r + \sum_{j=1}^d \int_s^T \erm^{\int_t^r \partial_y f\left(\Theta_u^{t,x}\right) \, \drm u} \nabla \left[Z_r^{t,x}\right]^{(j)} \, \drm \widetilde{W}_r^j.
\label{eq:EDSR_underQ}
\end{align}
We define $\dst F_s^{t,x} = \erm^{\int_t^s \partial_y f\left(\Theta_r^{t,x}\right) \, \drm r}\nabla Y_s^{t,x} 
+ \int_t^s \erm^{\int_t^r \partial_y f\left(\Theta_u^{t,x}\right) \, \drm u} \partial_x f\left(\Theta_r^{t,x}\right) \nabla X_r^{t,x} \, \drm r$,
and then equation (\ref{eq:EDSR_underQ}) becomes:
\begin{equation}
F_s^{t,x} = F_T^{t,x} - \sum_{j=1}^d \int_s^T \erm^{\int_t^r \partial_y f\left(\Theta_u^{t,x}\right) \, \drm u} \nabla \left[Z_r^{t,x}\right]^{(j)} \, \drm \widetilde{W}_r^j,
\label{eq:martingale_F}
\end{equation}
and it tells us that $F^{t,x}$ is a $\IQ$-martingale. We recall, for every $s \in [t,T]$, that $Y_s^{t,x} = u\left(s,X_s^{t,x}\right)$, $\nabla Y_s^{t,x} = \partial_x u\left(s,X_s^{t,x}\right) \nabla X_s^{t,x}$ and $Z_s^{t,x} = \partial_x u\left(s,X_s^{t,x}\right) \sigma\left(X_s^{t,x}\right) = \nabla Y_s^{t,x} \left(\nabla X_s^{t,x}\right)^{-1} \sigma\left(X_s^{t,x}\right)$.
Indeed, $\nabla X_s^{t,x}$ is invertible. We can show that $\nabla X^{t,x}$ is the solution of the linear SDE (because $b$ and $\sigma$ are $\Ccal^1$ with bounded derivatives):
\[
\drm \nabla X_s^{t,x} = \partial_x b\left(X_s^{t,x}\right) \nabla X_s^{t,x} \, \drm s+ \sum_{j=1}^d \partial_x^j \sigma\left(X_s^{t,x}\right) \nabla X_s^{t,x} \, \drm W_s^j,
\]
whose solution is $\dst \nabla X_s^{t,x} = \exp\left(\int_s^t \left\{ \partial_x b\left(X_r^{t,x}\right) - \frac{1}{2} \sum_{j=1}^d{\partial_x^j \sigma\left(X_r^{t,x}\right) \left[\partial_x^j \sigma\left(X_r^{t,x}\right)\right]^*}\right\} \, \drm r + \sum_{j=1}^d \int_t^s \partial_x^j \sigma\left(X_r^{t,x}\right) \, \drm W_r^j\right)$, where the notation $\partial_x^j \sigma\left(X_s^{t,x}\right)$ stands for the $(d \times d)$-matrix
$\begin{pmatrix} \partial_x \left[\sigma\left(X_s^{t,x}\right)\right]^{(1,j)} \\\hline \vdots \\\hline \partial_x \left[\sigma\left(X_s^{t,x}\right)\right]^{(d,j)} \end{pmatrix}$, and where $\left[\sigma\left(X_s^{t,x}\right)\right]^{(i,j)}$ is the coefficient at line $i$ and column $j$ in the matrix $\sigma\left(X_s^{t,x}\right)$. This way, we see that $\left(\nabla X_s^{t,x}\right)^{-1}$ is solution of the following linear SDE:
\begin{equation}
\drm \left(\nabla X_s^{t,x}\right)^{-1} 
  = \left(\nabla X_s^{t,x}\right)^{-1} \left\{-\partial_x b\left(X_s^{t,x}\right) + \frac{1}{2} \sum_{j=1}^d\partial_x^j \sigma\left(X_r^{t,x}\right) \left[\partial_x^j \sigma\left(X_r^{t,x}\right)\right]^*\right\} \, \drm s - \left(\nabla X_s^{t,x}\right)^{-1} \sum_{j=1}^d \partial_x^j \sigma\left(X_s^{t,x}\right) \, \drm W_s^j.
\label{eq:nablaX-1}
\end{equation}
In the following, for every $s \in [t,T]$, we set $R_s^{t,x} = Z_s^{t,x} \sigma\left(X_s^{t,x}\right)^{-1} = \nabla Y_s^{t,x} \left(\nabla X_s^{t,x}\right)^{-1} = \partial_x u\left(s,X_s^{t,x}\right)$, $\dst \beta_s^{t,x} = \int_t^s \erm^{\int_t^r \partial_y f\left(\Theta_u^{t,x}\right) \, \drm u} \partial_x f\left(\Theta_r^{t,x}\right) \nabla X_r^{t,x} \, \drm r \left(\nabla X_s^{t,x}\right)^{-1}$, $\widetilde{R}_s^{t,x} = F_s^{t,x} \left(\nabla X_s^{t,x}\right)^{-1} = \erm^{\int_t^s \partial_y f\left(\Theta_r^{t,x}\right) \, \drm r} R_s^{t,x} + \beta_s^{t,x}$. Using the integration by parts formula, combining (\ref{eq:martingale_F}) and (\ref{eq:nablaX-1}), one gets:
\begin{align}
\drm \widetilde{R}_s^{t,x}
  = \widetilde{R}_s^{t,x} &\left\{-\partial_x b\left(X_s^{t,x}\right) + \sum_{j=1}^d\partial_x^j \sigma\left(X_r^{t,x}\right) \left[\partial_x^j \sigma\left(X_r^{t,x}\right)\right]^*- \sum_{j=1}^d \partial_{z_j} f\left(\Theta_s^{t,x}\right) \partial_x^j \sigma\left(X_s^{t,x}\right) \right\} \, \drm s \nonumber \\
  & - \sum_{j=1}^d \erm^{\int_t^s \partial_y f\left(\Theta_r^{t,x}\right) \, \drm r} \nabla \left[Z_s^{t,x}\right]^{(j)} \left(\nabla X_s^{t,x}\right)^{-1} \partial_x^j \sigma\left(X_s^{t,x}\right)  \, \drm s \nonumber\\
  & + \sum_{j=1}^d \left\{\erm^{\int_t^s \partial_y f\left(\Theta_r^{t,x}\right) \, \drm r} \nabla \left[Z_s^{t,x}\right]^{(j)} \left(\nabla X_s^{t,x}\right)^{-1} - \widetilde{R}_s^{t,x} \partial_x^j \sigma\left(X_s^{t,x}\right) \right\} \, \drm \widetilde{W}_s^j.
 \end{align}
Then, let us take a new parameter $\lambda \in \IR$; we have:
\begin{align}
\drm \left|\erm^{\lambda s} \widetilde{R}_s^{t,x}\right|^2
  &= \erm^{2\lambda s} \widetilde{R}_s^{t,x} \left\{2\lambda \Irm_d -2\partial_x b\left(X_s^{t,x}\right) + 2\sum_{j=1}^d\partial_x^j \sigma\left(X_r^{t,x}\right) \left[\partial_x^j \sigma\left(X_r^{t,x}\right)\right]^* - 2\sum_{j=1}^d \partial_{z_j} f\left(\Theta_s^{t,x}\right) \partial_x^j \sigma\left(X_s^{t,x}\right) \right\} \left(\widetilde{R}_s^{t,x}\right)^* \, \drm s \nonumber \\
  &\pushright{ - 2\erm^{2\lambda s} \sum_{j=1}^d \erm^{\int_t^s \partial_y f\left(\Theta_r^{t,x}\right) \, \drm r} \nabla \left[Z_s^{t,x}\right]^{(j)} \left(\nabla X_s^{t,x}\right)^{-1} \partial_x^j \sigma\left(X_s^{t,x}\right) \left(\widetilde{R}_s^{t,x}\right)^* \, \drm s \nonumber \hspace{2.5cm}} \\
	&\pushright{ + 2\erm^{2\lambda s} \sum_{j=1}^d \left\{\erm^{\int_t^s \partial_y f\left(\Theta_r^{t,x}\right) \, \drm r} \nabla \left[Z_s^{t,x}\right]^{(j)} \left(\nabla X_s^{t,x}\right)^{-1} - \widetilde{R}_s^{t,x} \partial_x^j \sigma\left(X_s^{t,x}\right) \right\} \left(\widetilde{R}_s^{t,x}\right)^* \, \drm \widetilde{W}_s^j \nonumber \hspace{2.5cm}} \\
	& \pushright{+ \erm^{2\lambda s} \sum_{j=1}^d \left\{\left[\erm^{\int_t^s \partial_y f\left(\Theta_r^{t,x}\right) \, \drm r} \nabla \left[Z_s^{t,x}\right]^{(j)} \left(\nabla X_s^{t,x}\right)^{-1} - \widetilde{R}_s^{t,x} \partial_x^j \sigma\left(X_s^{t,x}\right)\right] \right. \nonumber \hspace{2.5cm}} \\
	& \pushright{ \left. \times \left[\erm^{\int_t^s \partial_y f\left(\Theta_r^{t,x}\right) \, \drm r} \nabla \left[Z_s^{t,x}\right]^{(j)} \left(\nabla X_s^{t,x}\right)^{-1} - \widetilde{R}_s^{t,x} \partial_x^j \sigma\left(X_s^{t,x}\right)\right]^* \right\} \, \drm s. \hspace{2.5cm}}
 \end{align}
If we denote $\gamma = \erm^{\int_t^s \partial_y f\left(\Theta_r^{t,x}\right) \, \drm r} \nabla \left[Z_s^{t,x}\right]^{(j)} \left(\nabla X_s^{t,x}\right)^{-1}$ and $\delta = \widetilde{R}_s^{t,x} \partial_x^j \sigma\left(X_s^{t,x}\right)$, we remark that we have the inequality $-2\gamma\delta^* + |\gamma-\delta|^2 = |\gamma-2\delta|^2 -3|\delta|^2$. Thus,
\begin{align}
\drm \left|\erm^{\lambda s} \widetilde{R}_s^{t,x}\right|^2
  &= \erm^{2\lambda s} \widetilde{R}_s^{t,x} \left\{2\lambda \Irm_d -2\partial_x b\left(X_s^{t,x}\right) - \sum_{j=1}^d\partial_x^j \sigma\left(X_r^{t,x}\right) \left[\partial_x^j \sigma\left(X_r^{t,x}\right)\right]^* - 2\sum_{j=1}^d \partial_{z_j} f\left(\Theta_s^{t,x}\right) \partial_x^j \sigma\left(X_s^{t,x}\right) \right\} \left(\widetilde{R}_s^{t,x}\right)^* \, \drm s \nonumber \\
	&\pushright{ + 2\erm^{2\lambda s} \sum_{j=1}^d \left\{\erm^{\int_t^s \partial_y f\left(\Theta_r^{t,x}\right) \, \drm r} \nabla \left[Z_s^{t,x}\right]^{(j)} \left(\nabla X_s^{t,x}\right)^{-1} - \widetilde{R}_s^{t,x} \partial_x^j \sigma\left(X_s^{t,x}\right) \right\} \left(\widetilde{R}_s^{t,x}\right)^* \, \drm \widetilde{W}_s^j \nonumber \hspace{2.5cm}} \\
	&\pushright{ + \erm^{2\lambda s} \sum_{j=1}^d \left|\erm^{\int_t^s \partial_y f\left(\Theta_r^{t,x}\right) \, \drm r} \nabla \left[Z_s^{t,x}\right]^{(j)} \left(\nabla X_s^{t,x}\right)^{-1} - \widetilde{R}_s^{t,x} \partial_x^j \sigma\left(X_s^{t,x}\right)\right|^2 \, \drm s. \hspace{2.5cm}}
 \end{align}
This way, we can see that, for $\lambda$ great enough (that is to say bigger than something depending only on $H_p$, $\left\|\sigma^{-1}\right\|_\infty$ and on bounds over the derivatives of $b$ and $\sigma$), the process $\left(\left|\erm^{\lambda s} \widetilde{R}_s^{t,x}\right|^2\right)_{s \in [t,T]}$ is a $\IQ$-submartingale. So, we get:
\[
\left|R_t^{t,x}\right|^2 (T-t) \leq \erm^{-2\lambda t} \IE^\IQ\left[\int_t^T{\erm^{2\lambda s} \left|\widetilde{R}_s^{t,x}\right|^2 \, \drm s}\right] \leq \erm^{2\lambda(T-t)} \IE^\IQ\left[\int_t^T \left|\widetilde{R}_s^{t,x}\right|^2 \, \drm s\right].
\]
But $\widetilde{R}_s^{t,x} = \erm^{\int_t^s \partial_y f\left(\Theta_r^{t,x}\right) \, \drm r} R_s^{t,x} + \beta_s^{t,x}$ and $\partial_y f$ is bounded by a constant $H_y$, so
\[
\left|R_t^{t,x}\right|^2 (T-t) \leq \erm^{2(\lambda+H_y)(T-t)} \IE^\IQ\left[\int_t^T \left|R_s^{t,x}\right|^2 \, \drm s\right] + \erm^{2\lambda(T-t)} \IE^\IQ\left[\int_t^T \left|\beta_s^{t,x}\right|^2 \, \drm s\right].
\]
On the one hand, we have: 
\begin{align}
\IE^\IQ\left[\int_t^T \left|R_s^{t,x}\right|^2 \, \drm s\right] 
  &\leq \left\|\sigma^{-1}\right\|^2_\infty \IE\left[\erm^{\int_t^T \partial_z f\left(\Theta_s^{t,x}\right)\, \drm W_s - \frac{1}{2} \int_t^T \left|\partial_z f\left(\Theta_s^{t,x}\right)\right|^2 \, \drm s} \int_t^T \left|Z_s^{t,x}\right|^2 \, \drm s\right] \nonumber \\
  &\leq \left\|\sigma^{-1}\right\|^2_\infty\erm^{\frac{1}{2} H_p^2 \left\|\sigma^{-1}\right\|^2_\infty (T-t)} \IE\left[\left(\int_t^T \left|Z_s^{t,x}\right|^2 \, \drm s\right)^2\right]^{\frac{1}{2}}
\end{align}
We recall equation (\ref{eq:Z_Lp}):
$ \dst \IE\left[\left(\int_t^T \left|Z_r^{t,x}\right|^2 \, \drm r\right)^2\right] \leq C \erm^{4a(T-t)} \left\{\IE\left[\left|Y^{t,x}\right|^{*,4}_{t,T}\right] + C_T\left(1+|x|^{4\mu}\right)\right\}$.
By using the Lemma 2.2 of \cite{MZ02}, we have, where $C$ is a constant:
\[
\IE\left[\left|Y^{t,x}\right|^{*,4}_{t,T}\right] \leq C \IE\left[\left|g\left(X_T^{t,x}\right)\right|^4 + \int_t^T \left|f\left(X_r^{t,x},0,0\right)\right|^4 \, \drm r\right].
\]
But, using proposition \ref{majo_X_Sp} and the assumption made on $f(\bullet,0,0)$, we get:
\[ \IE\left[\int_t^T \left|f\left(X_r^{t,x},0,0\right)\right|^4 \, \drm r\right] \leq C\left(1+|x|^{4\mu}\right),\]
where $C$ only depends on $\mu$, $M_f$, $T-t$, $T$, $|\sigma(0)|$, $\|\sigma\|_\lip$, $|b(0)|$ and $\|b\|_\lip$.
On the other hand, using the Lemma 2.1 of \cite{MZ02}, we can show that $\dst \nabla X^{t,x}$ and $\left(\nabla X^{t,x}\right)^{-1}$ are in $\Scal^p$, for all $p < \infty$. Also, we have $\left|\partial_x f\left(\Theta_r^{t,x}\right)\right| \leq H_x + H_p \left|Z_r^{t,x}\right| \left\|\partial_x \sigma^{-1}\right\|_\infty$. Then, with Cauchy-Schwarz inequality and a priori estimates of \cite{Rennes03}, we get that $\dst \IE^{\IQ}\left[\int_t^T \left|\beta_s^{t,x}\right|^2 \, \drm s\right]$ is bounded.
We recall that $R_t^{t,x} = \partial_x u(t,x)$. We have the conclusion:
\[ \left|\partial_x u(t,x)\right|^2 \leq \Ccal \left(1 + |x|^{2\mu} + \IE\left[\left|g\left(X_T^{t,x}\right)\right|^4\right]^{\frac{1}{2}}\right).\]
\end{proof}

\begin{thm}
We make the following assumptions:
\begin{itemize}
\item $\sigma$ and $b$ are Lipschitz continuous; 
\item the function $\sigma(\bullet)^{-1}$ is bounded;
\item $f \in \Ccal^0\left(\IR^d \times \IR \times \left(\IR^d\right)^*, \IR\right)$ and $\forall x \in \IR^d,\, |f(x,0,0)| \leq M_f \left(1+|x|^\mu\right)$ and $|g(x)| \leq M_g\left(1+|x|^\nu\right)$;
\item we can write $f(x,y,z) = \psi(x,z) - \alpha y$, with $\alpha \in (0,1]$ and if we define $h : (x,p) \mapsto \psi(x,p\sigma(x))$, then $h$ is Lipschitz continuous (with bounds denoted $H_x$ and $H_p$ w.r.t. $x$ and $p$).
\end{itemize}
Under those assumptions, the function $u : (t,x) \mapsto Y_t^{t,x}$ satisfies the following inequality, for every $T > t$:
\[ \left|u(t,x)-u(t,x')\right| \leq \Ccal \left(1 + |x|^{\mu \vee \nu} + |x'|^{\mu \vee \nu}\right)|x-x'|,\]
where $\Ccal$ depends on $T$, $T-t$, $H_x$, $H_p$, $M_f$, $M_g$, $\left\|\sigma^{-1}\right\|_\infty$, $|b(0)|$, $|\sigma(0)|$, $\|b\|_\lip$, $\|\sigma\|_\lip$ and $\mu$.
\label{thm:u_Lipschitz} \label{thm:u_locLipschitz}
\end{thm}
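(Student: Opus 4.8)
The plan is to reduce the theorem, by a mollification argument, to the regular setting already handled in the preceding Proposition and Lemma, and then to recover the local Lipschitz bound by integrating the gradient estimate along line segments.

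First I would approximate the data. Exactly as in the construction used for Corollary~\ref{cor:semigroup} and in the first step of the proof of Proposition~\ref{prop:valpha_quadratic}, choose sequences $b^n$, $\sigma^n$, $g^n$ of $\Ccal^\infty$ functions with bounded derivatives and a sequence $h^n$ of $\Ccal^\infty$ functions with bounded derivatives approximating $h:(x,p)\mapsto\psi(x,p\sigma(x))$, such that: $b^n\to b$, $\sigma^n\to\sigma$, $g^n\to g$ uniformly on compacts and $h^n\to h$ uniformly; $\|b^n\|_\lip\le\|b\|_\lip$, $\|\sigma^n\|_\lip\le\|\sigma\|_\lip$ and the Lipschitz bounds $H_x$, $H_p$ of $h^n$ are independent of $n$; $x\mapsto\sigma^n(x)^{-1}$ is bounded uniformly in $n$; and, setting $f^n(x,y,z)=\psi^n(x,z)-\alpha y$ with $\psi^n(x,z)=h^n(x,z\sigma^n(x)^{-1})$, one has $|f^n(x,0,0)|\le M_f(1+|x|^\mu)$ and $|g^n(x)|\le M_g(1+|x|^\nu)$ with $M_f$, $M_g$ independent of $n$. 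Denote by $X^{n,t,x}$ the solution of the corresponding SDE, by $(Y^{n,t,x},Z^{n,t,x})$ that of the corresponding BSDE, and set $u^n(t,x)=Y_t^{n,t,x}$. A standard a priori estimate for BSDEs monotone in $y$ and Lipschitz in $z\sigma^{-1}$ (Lemma~2.2 of \cite{MZ02} together with Proposition~\ref{majo_X_Sp}) gives $\sup_{|\varepsilon|\le1}\IE[|Y^{n,t,x+\varepsilon}|^{*,p}_{t,T}]<\infty$, with a bound locally uniform in $x$ and independent of $n$, so the hypotheses of the preceding Proposition hold for every $n$.

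Applying the preceding Proposition and Lemma to the $n$-th system, $u^n(t,\cdot)\in\Ccal^1$ and
\[
\left|\partial_x u^n(t,x)\right|\le\Ccal\left(1+|x|^\mu+\IE\left[\left|g^n\left(X_T^{n,t,x}\right)\right|^4\right]^{1/4}\right),
\]
where $\Ccal$ depends only on the quantities listed there, all of which are controlled uniformly in $n$ by the choices above. Since $|g^n(x)|\le M_g(1+|x|^\nu)$ and the constant in Proposition~\ref{majo_X_Sp} depends only on $T$, $r_1$, $r_2$, $\xi_1$, $\xi_2$ (hence is uniform in $n$), we get $\IE[|g^n(X_T^{n,t,x})|^4]^{1/4}\le C(1+|x|^\nu)$, so $|\partial_x u^n(t,x)|\le\Ccal(1+|x|^{\mu\vee\nu})$ with $\Ccal$ independent of $n$. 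Integrating along the segment joining $x'$ to $x$ and using $|x'+s(x-x')|^{\mu\vee\nu}\le C(|x|^{\mu\vee\nu}+|x'|^{\mu\vee\nu})$ for $s\in[0,1]$ yields
\[
\left|u^n(t,x)-u^n(t,x')\right|\le\Ccal\left(1+|x|^{\mu\vee\nu}+|x'|^{\mu\vee\nu}\right)|x-x'|
\]
for every $n$, with the same $\Ccal$.

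It then remains to pass to the limit and check $u^n(t,x)\to u(t,x)$ for each fixed $(t,x)$. Lemma~2.1 of \cite{MZ02} (applied on a ball large enough to capture the trajectories, using the moment bound of Proposition~\ref{majo_X_Sp}) shows $\IE[|X^{n,t,x}-X^{t,x}|^{*,2}_{t,T}]\to0$; a stability estimate for the backward equations under the simultaneous perturbation of $b$, $\sigma$, $\psi$ and the terminal condition (again Lemma~2.2 of \cite{MZ02}, using $f^n\to f$ and $g^n\to g$ locally uniformly together with the monotone--Lipschitz structure) then gives $\IE[|Y^{n,t,x}-Y^{t,x}|^{*,2}_{t,T}]\to0$, hence $u^n(t,x)\to u(t,x)$. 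Passing to the limit in the last display finishes the proof. The main obstacle is to keep all constants independent of $n$ while the forward drift $b$ is merely of linear growth and unbounded: one must ensure that the mollification preserves the growth bounds on $f^n(\cdot,0,0)$ and $g^n$, as well as the Lipschitz constants and the bound on $(\sigma^n)^{-1}$, so that Proposition~\ref{majo_X_Sp} and the $\Ccal^1$ gradient estimate are invoked with their $n$-free constants; the stability of the BSDE solutions under unbounded coefficients is the delicate technical point, where the estimates of \cite{MZ02} and \cite{Rennes03} are used.
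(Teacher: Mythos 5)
Your proposal is correct and follows essentially the same route as the paper: mollify $h$, $g$, $b$, $\sigma$ so that the preceding Proposition and Lemma give the gradient bound $\left|\partial_x u^n(t,x)\right|\le\Ccal\left(1+|x|^{\mu\vee\nu}\right)$ with constants uniform in the approximation parameter, deduce the local Lipschitz estimate by the mean value theorem, and pass to the limit using convergence of the approximating BSDE values $Y^{n,t,x}_t\to Y^{t,x}_t$. The only cosmetic difference is the final limit passage: the paper goes through Arzel\`a--Ascoli on compacts and then identifies the limit with $u$ via that same convergence, whereas you invoke the forward and backward stability estimates of \cite{MZ02} directly, which amounts to the same argument.
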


\begin{proof}~\\
We approximate the functions $h$, $g$, $b$ and $\sigma$ by some uniformly converging sequences $\left(h^\varepsilon\right)_{\varepsilon > 0}$, $\left(g^\varepsilon\right)_{\varepsilon > 0}$, $\left(b^\varepsilon\right)_{\varepsilon > 0}$ and $\left(\sigma^\varepsilon\right)_{\varepsilon > 0}$ of functions of class $\Ccal^1$ with bounded derivatives. We can assume that there exists a bound, independent of $\varepsilon$, of all the derivatives of $h^\varepsilon$, $b^\varepsilon$ and $\sigma^\varepsilon$, for every $\varepsilon > 0$. We define $f^\varepsilon(x,y,z) = h^\varepsilon\left(x,z\sigma(x)^{-1}\right) - \alpha y$ and we can assume that $f^\varepsilon(\bullet,0,0) = h^\varepsilon(\bullet,0,0)$ and $g^\varepsilon$ have polynomial growth independently of $\varepsilon$, with constants $M_f$ and $M_g$ and exponents $\mu$ and $\nu$. For every $(t,x) \in [0,T) \times \IR^d$, $\left|\partial_x u^\varepsilon(t,x)\right| \leq \Ccal\left(1+ |x|^{\mu \vee \nu}\right)$, where $\Ccal$ does not depend on $\varepsilon$ or $x$. Using the mean-value theorem, it comes that :
\begin{equation}
\forall t \in [0,T),\, \forall x,x' \in \IR^d,\, \left|u^\varepsilon(t,x) - u^\varepsilon(t,x')\right| \leq \Ccal\left(1+ |x|^{\mu \vee \nu} + |x'|^{\mu \vee \nu}\right) |x-x'|.
\label{eq:Ascoli}
\end{equation}
Then, for every $T > 1$, the set $\left\{ u^\varepsilon(t,\bullet) \middle| \varepsilon > 0, t \in [0,T-1]\right\}$ is equicontinuous; also it is pointwise bounded: we can show it by writing the equation verified by $Y^{\varepsilon,t,x}$ and using the Lemma 2.2 of \cite{MZ02}. Let $K$ be a compact subset of $\IR^d$. By the Arzelà-Ascoli theorem, we get, for a sequence $\left(\varepsilon^{(K)}_n\right)_{n \geq 0}$ which has $0$ as limit:
\[
\forall t \in [0,T-1],\, \restriction{u^{\varepsilon^{(K)}_n}(t,\bullet)}{K} \overset{\|.\|_\infty}{\underset{n \rightarrow \infty}{\longrightarrow}} u_K(t,\bullet).
\]
Let $x \in K$; we have: $u^{\varepsilon^{(K)}_n}(t,x) \underset{n \rightarrow \infty}{\longrightarrow} u_K(t,x)$ and $u^{\varepsilon^{(K)}_n}(t,x) = Y_t^{\varepsilon^{(K)}_n,t,x} \underset{n \rightarrow \infty}{\longrightarrow} Y_t^{t,x}$. So the limit $u_K(t,x) = Y_t^{t,x}$ does not depend on $K$; and the convergence of $u^{\varepsilon_n^{(K)}}(t,\bullet)$ is uniform on $K$.
The following triangle inequality gives the result claimed after taking the limit as $n$ goes to infinity (because $\Ccal$ is independent of $K$): for every $x,x' \in K$ and $t \in [0,T-1]$,
\[
\forall n \in \IN,\, |u(t,x)-u(t,x')| \leq \left|u(t,x)-u^{\varepsilon_n^{(K)}}(t,x)\right| + \Ccal\left(1+ |x|^{\mu \vee \nu} + |x'|^{\mu \vee \nu}\right)|x-x'| + \left|u^{\varepsilon_n^{(K)}}(t,x')-u(t,x')\right|.
\]
\end{proof}

\end{document}